\newcommand{\nc}{\newcommand}
\nc{\eg}{\mathfrak{e} } \nc{\fg}{\mathfrak{f} } \nc{\qg}{\mathfrak{q} }
\nc{\vg}{\mathfrak{v} } \nc{\wg}{\mathfrak{w} }
\nc{\zg}{\mathfrak{z} } \nc{\ngo}{\mathfrak{n} }
\nc{\kg}{\mathfrak{k} } \nc{\mg}{\mathfrak{m} }
\nc{\bg}{\mathfrak{b} } \nc{\ggo}{\mathfrak{g} }
\nc{\ggob}{\overline{\mathfrak{g}} } \nc{\sog}{\mathfrak{so} }
\nc{\sug}{\mathfrak{su} } \nc{\spg}{\mathfrak{sp} }
\nc{\slg}{\mathfrak{sl} } \nc{\glg}{\mathfrak{gl} }
\nc{\cg}{\mathfrak{c} } \nc{\rg}{\mathfrak{r} }
\nc{\hg}{\mathfrak{h} } \nc{\tg}{\mathfrak{t} }
\nc{\ug}{\mathfrak{u} } \nc{\dg}{\mathfrak{d} }
\nc{\ag}{\mathfrak{a} } \nc{\pg}{\mathfrak{p} }
\nc{\sg}{\mathfrak{s} } \nc{\affg}{\mathfrak{aff} }
\nc{\Xg}{\mathfrak{X} }
\nc{\pca}{\mathcal{P}} \nc{\nca}{\mathcal{N}}
\nc{\lca}{\mathcal{L}} \nc{\oca}{\mathcal{O}}
\nc{\mca}{\mathcal{M}} \nc{\tca}{\mathcal{T}}
\nc{\aca}{\mathcal{A}} \nc{\cca}{\mathcal{C}}
\nc{\gca}{\mathcal{G}} \nc{\sca}{\mathcal{S}}
\nc{\hca}{\mathcal{H}} \nc{\bca}{\mathcal{B}}
\nc{\dca}{\mathcal{D}} \nc{\val}{\operatorname{val}}
\nc{\vp}{\varphi} \nc{\ddt}{\tfrac{{\rm d}}{{\rm d}t}}
\nc{\dpar}{\tfrac{\partial}{\partial t}} \nc{\im}{\mathtt{i}}
\nc{\SO}{\mathrm{SO}} \nc{\Spe}{\mathrm{Sp}} \nc{\Sl}{\mathrm{SL}}
\nc{\SU}{\mathrm{SU}} \nc{\Or}{\mathrm{O}} \nc{\U}{\mathrm{U}}
\nc{\Gl}{\mathrm{GL}} \nc{\Se}{\mathrm{S}} \nc{\Cl}{\mathrm{Cl}}
\nc{\Spein}{\mathrm{Spin}} \nc{\Pin}{\mathrm{Pin}}
\nc{\G}{\mathrm{GL}_n(\RR)} \nc{\g}{\mathfrak{gl}_n(\RR)}
\nc{\RR}{{\mathbb R}} \nc{\HH}{{\mathbb H}} \nc{\CC}{{\mathbb C}}
\nc{\ZZ}{{\mathbb Z}} \nc{\FF}{{\mathbb F}} \nc{\NN}{{\mathbb N}}
\nc{\QQ}{{\mathbb Q}} \nc{\PP}{{\mathbb P}}
\nc{\vs}{\vspace{.2cm}} \nc{\vsp}{\vspace{1cm}}
\nc{\ip}{\langle\cdot,\cdot\rangle} \nc{\ipp}{(\cdot,\cdot)}
\nc{\la}{\langle} \nc{\ra}{\rangle} \nc{\unm}{\tfrac{1}{2}}
\nc{\unc}{\tfrac{1}{4}} \nc{\und}{\tfrac{1}{16}}
\nc{\no}{\vs\noindent} \nc{\lamn}{\Lambda^2(\RR^n)^*\otimes\RR^n}
\nc{\lamp}{\Lambda^2\pg^*\otimes\pg}
\nc{\lamg}{\Lambda^2\ggo^*\otimes\ggo}
\nc{\lamngo}{\Lambda^2\ngo^*\otimes\ngo} \nc{\tangz}{{\rm T}^{\rm
Zar}} \nc{\mum}{/\!\!/} \nc{\kir}{/\!\!/\!\!/}
\nc{\Ri}{\tfrac{4\Ric_{\mu}}{||\mu||^2}} \nc{\ds}{\displaystyle}
\nc{\ben}{\begin{enumerate}} \nc{\een}{\end{enumerate}}
\nc{\f}{\frac} \nc{\lb}{[\cdot,\cdot]}
\nc{\isn}{\tfrac{1}{||v||^2}} \nc{\gkp}{(\ggo=\kg\oplus\pg,\ip)}
\nc{\ukh}{(\ug=\kg\oplus\hg,\ip)} \nc{\gkhn}{\ggo = \kg \oplus \hg \oplus \ngo}
\nc{\mrd}{(\ggo = \kg \oplus \pg, \ip)}
\nc{\Hess}{\operatorname{Hess}} \nc{\ad}{\operatorname{ad}}
\nc{\Ad}{\operatorname{Ad}} \nc{\rank}{\operatorname{rank}}
\nc{\Irr}{\operatorname{Irr}} \nc{\End}{\operatorname{End}}
\nc{\Aut}{\operatorname{Aut}} \nc{\Inn}{\operatorname{Inn}}
\nc{\Der}{\operatorname{Der}} \nc{\Ker}{\operatorname{Ker}}
\nc{\Iso}{\operatorname{I}} \nc{\Diff}{\operatorname{Diff}}
\nc{\Lie}{\operatorname{Lie}} \nc{\tr}{\operatorname{tr}}
\nc{\dif}{\operatorname{d}} \nc{\sen}{\operatorname{sen}}
\nc{\modu}{\operatorname{mod}} \nc{\Riem}{\operatorname{Rm}}
\nc{\Ricci}{\operatorname{Ric}} \nc{\sym}{\operatorname{sym}}
\nc{\symac}{\operatorname{sym^{ac}}}
\nc{\symc}{\operatorname{sym^{c}}} \nc{\scalar}{\operatorname{sc}}
\nc{\grad}{\operatorname{grad}} \nc{\ricci}{\operatorname{Ric}}
\nc{\nr}{\operatorname{nr}} \nc{\riccic}{\operatorname{ric^{c}}}
\nc{\riccig}{\operatorname{ric^{\gamma}}}
\nc{\Rin}{\operatorname{M}} \nc{\Le}{\operatorname{L}}
\nc{\tang}{\operatorname{T}} \nc{\level}{\operatorname{level}}
\nc{\rad}{\operatorname{r}} \nc{\abel}{\operatorname{ab}}
\nc{\CH}{\operatorname{CH}} \nc{\mcc}{\operatorname{mcc}}
\nc{\Adj}{\operatorname{Adj}} \nc{\Order}{\operatorname{O}}
\nc{\inj}{\operatorname{inj}}\nc{\R}{\operatorname{R}}
\nc{\Spec}{\operatorname{Spec}} \nc{\I}{\operatorname{I}}
\nc{\Rc}{\operatorname{Rc}} \nc{\Nu}{\operatorname{Nu}}
\theoremstyle{plain}
\newtheorem{theorem}{Theorem}[section]
\newtheorem{proposition}[theorem]{Proposition}
\newtheorem{corollary}[theorem]{Corollary}
\newtheorem{lemma}[theorem]{Lemma}
\newtheorem{teointro}{Theorem}
\newtheorem*{AC}{Alekseevskii's conjecture}
\newtheorem*{GAC}{Generalized Alekseevskii's conjecture}
\theoremstyle{definition}
\theoremstyle{remark}
\newtheorem{remark}[theorem]{Remark}
\title{Homogeneous Ricci solitons in low dimensions}
\author{Romina M.~Arroyo} \author{Ramiro Lafuente}
\address{FaMAF y CIEM, Universidad Nacional de C\'ordoba, C\'ordoba, Argentina}
\email{arroyo@famaf.unc.edu.ar, rlafuente@famaf.unc.edu.ar}
\thanks{This research was supported by a fellowship from CONICET and grants from CONICET, FonCyT and SeCyT (Universidad Nacional de C\'ordoba)}
\subjclass[2010]{53C25, 53C30}
\begin{document}

\begin{abstract}
In this article we classify expanding homogeneous Ricci solitons up to dimension $5$, according to their presentation as homogeneous spaces. We obtain that they are all isometric to solvsolitons, and this in particular implies that the generalized Alekseevskii conjecture holds in these dimensions. In addition, we prove that the conjecture holds in dimension $6$ provided the transitive group is not semisimple.
\end{abstract}

\maketitle

%\tableofcontents

\section{Introduction}\label{int}

A complete Riemannian metric $g$ on a differentiable manifold $M$ is a \emph{Ricci soliton} if its Ricci tensor satisfies
\begin{equation}\label{defsoliton}
    \Rc(g) = c g + \lca_X g, \qquad \hbox{for some } c\in \RR, \qquad X\in \Xg(M),
\end{equation}
where $\lca_X g$ denotes the usual Lie derivative of $g$ in the direction of the complete vector field~ $X$. The importance of these metrics relies on the fact that they are precisely the self-similar solutions to the Ricci flow, and so they arise naturally in the study of its singularities. Moreover, they are natural generalizations of the notion of \emph{Einstein metric} (i.e.~$\Rc(g) = c g$).

In the homogeneous case, Einstein and Ricci soliton metrics have been extensively studied by many authors (see for instance the surveys \cite{cruzchica}, \cite{Wng2}, and the references therein). However, its classification is yet not fully understood. This article is devoted to the classification of homogeneous Ricci solitons in low dimensions.

In the \emph{shrinking} case ($c>0$), they are all given by quotients of a product of a compact Einstein homogeneous manifold with a Euclidean space (\cite{PW}). The classification of compact Einstein homogeneous manifolds in low dimensions has been studied in \cite{AleDotFer}, \cite{NikRod}, \cite{Nkn1} and \cite{BhmKrr}, among many others. On the other hand, \emph{steady} homogeneous Ricci solitons ($c=0$) are necessarily flat, since it easily follows that they must be Ricci flat, and so flat by \cite[Theorem 1]{AlkKml}. Thus, we focus on the \emph{expanding} case ($c<0$).

It is well known that in dimensions $2$ and $3$, Einstein metrics are necessarily of constant sectional curvature. Simply connected Einstein homogeneous $4$-manifolds have been classified in \cite{Jns}, where it is shown that they are all isometric to symmetric spaces. The classification of $5$-dimensional simply connected Einstein homogeneous manifolds with negative Ricci curvature has been carried out in \cite{Alek75} in the case of non-positive sectional curvature, and in \cite{Nkn1} in the general case.

Regarding expanding homogeneous Ricci solitons, it is worth pointing out that up to now all known examples are isometric to a \emph{solvsoliton}, that is, a left-invariant metric on a simply connected solvable Lie group $S$ whose (1,1)-Ricci tensor at $\sg = \Lie(S)$ satisfies
\begin{equation}\label{defsolvsoliton}
    \ricci = c I + D, \qquad c\in \RR, \qquad D\in \Der(\sg).
\end{equation}
When the group is nilpotent, they are called \emph{nilsolitons}. It was shown in \cite{finding} and \cite{Wll03} that any nilpotent Lie group up to dimension $6$ admits a nilsoliton metric (which is unique up to isometry and scaling by a result of Heber, see \cite{Heb}). Nilsolitons in dimension $7$ have been recently classified in \cite{FC13}. Solvsolitons of dimension up to $4$ have been classified in \cite{solvsolitons}, and using the structural results from that article a classification up to dimension $6$ was given in \cite{Wll}.

For the general homogeneous case, recent results from \cite{Jbl}, \cite{alek} and \cite{Jbl13b} imply that any homogeneous Ricci soliton can be presented as a Riemannian homogeneous space $(G/K,g)$ which is an \emph{algebraic soliton}, i.e.~ its (1,1)-Ricci tensor at $T_{eK} G/K$ satisfies
\begin{equation}\label{defalgsoliton}
    \ricci = c I + D_\pg, \qquad c\in \RR, \qquad D\in \Der(\ggo),
\end{equation}
for some reductive decomposition $\ggo = \kg \oplus \pg$, where $D_\pg$ denotes the restriction of $D$ to $\pg$ (see Section~ \ref{pre} for more details about algebraic solitons).

Summarizing, in order to classify homogeneous Ricci solitons, we will restrict ourselves to study expanding algebraic solitons. As we have already mentioned, Einstein metrics of negative Ricci curvature and solvsolitons have been previously classified up to dimension $5$, so we will omit them.

Our first main result is the classification of simply connected expanding algebraic solitons of dimensions $3$ and $4$ up to equivariant isometry (that is, with respect to the transitive group~ $G$; see Section \ref{prehomog} for a precise definition). 

%Explicar bien donde esta la clasificacion de Einstein hasta dim 5, y en que terminos es dicha clasificacion (isometria? equiv isom?)

\begin{teointro}\label{main34}
Any simply connected expanding algebraic soliton of dimension $3$ or $4$ is either Einstein, or equivariantly isometric to one of the algebraic solitons in Tables \ref{ta3} and \ref{ta4}.
\end{teointro}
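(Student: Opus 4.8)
The strategy is to enumerate all almost-effective presentations $(\ggo=\kg\oplus\pg,\ip)$ of a simply connected expanding algebraic soliton of dimension $n=\dim\pg\in\{3,4\}$, to solve equation \eqref{defalgsoliton} in each, to discard the Einstein solutions (classified elsewhere, and explicitly recognizable), and to match the remaining ones with the entries of Tables \ref{ta3} and \ref{ta4}, which are solvsolitons. \emph{Step 1 (normalization).} We may pass to an almost-effective presentation, so that $\kg$ contains no nonzero ideal of $\ggo$; this changes neither $(G/K,g)$ nor condition \eqref{defalgsoliton}. Since $K$ is compact, $\kg$ is a compactly embedded, hence reductive, subalgebra, and we fix an $\Ad(K)$-invariant inner product on $\ggo$ with $\kg\perp\pg$; the isotropy representation $\kg\to\sog(\pg)\cong\sog(n)$ is then faithful, which forces $\dim\kg\le\binom n2$, hence $\dim\ggo\le 6$ for $n=3$ and $\dim\ggo\le 10$ for $n=4$.

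\emph{Step 2 (listing the pairs $(\ggo,\kg)$).} Using the Levi decomposition $\ggo=\sg\ltimes\rg$ together with the classification of real Lie algebras of dimension at most $6$ (and noting that an $8$- or $10$-dimensional $\ggo$ admitting a codimension-$4$ compactly embedded $\kg$ must be $\sog(5)$, $\sog(4,1)$, $\sug(3)$, $\sug(2,1)$ or a semidirect extension thereof, each of which gives a rank-one symmetric space or $\RR^4$), we reduce to finitely many candidates. If the Levi factor $\sg\ne 0$ then $\dim\sg\ge 3$, so $\sg\in\{\slg_2(\RR),\sug(2)\}$ apart from a few $6$-dimensional options, and the pair $(\ggo,\kg)$ is highly restricted; such cases produce, via direct computation of the Ricci endomorphism and of $\Der(\ggo)$, either no expanding soliton (as for the left-invariant metrics on $\widetilde{\Sl_2(\RR)}$ and on $\SU(2)$) or a Riemannian product of a rank-one symmetric space with a Euclidean factor, which is either Einstein or one of the table entries (e.g. $\RR H^3\times\RR$ presented on $\sog(3,1)\oplus\RR$). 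If $\sg=0$, i.e. $\ggo$ is solvable, then $\kg$ lies in a maximal torus of $\ggo$, and we run through the Bianchi list ($n=3$) and the lists of solvable Lie algebras of dimension $4,5,6$ ($n=4$) that carry such a $\kg$.

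\emph{Step 3 (solving and matching).} For each candidate we write the Ricci endomorphism $\ricci$ of a general $\Ad(K)$-invariant metric by the standard homogeneous-space formula and impose $\ricci=cI+D_\pg$ with $c<0$ and $D\in\Der(\ggo)$, a finite polynomial system in the metric coefficients. Modding out by the automorphisms of $\ggo$ preserving $\kg$ (together with scaling) yields the solutions up to equivariant isometry. Whenever an expanding, non-Einstein solution appears we identify the underlying Riemannian manifold as a solvsoliton: when $G$ is solvable this is already its natural presentation and one verifies \eqref{defsolvsoliton} directly; when $G$ is not solvable one exhibits a transitive solvable subgroup via an Iwasawa-type / Gordon--Wilson decomposition and checks \eqref{defsolvsoliton} there. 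Computing the eigenvalues of $D$ and the scalar curvature then pins down the corresponding row of Table \ref{ta3} or \ref{ta4} and shows there are no repetitions.

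\emph{Main obstacle.} The delicate part is twofold. First, ruling out non-Einstein expanding algebraic solitons on $G/K$ with $G$ having a nontrivial semisimple Levi factor and $\rg\ne 0$ should not rely on brute force alone: here one uses the structure theory of algebraic solitons — in particular that the nilradical of $\ggo$ with the restricted metric is a nilsoliton and that $D$ restricts to it compatibly, together with Nikolayevsky's description of the pre-Einstein derivation \cite{Nkn1} — to force $\ggo$ to be of "solvable type" unless the metric is Einstein. Second, there is the sheer bulk of the dimension-$4$ casework, where the list of admissible $(\ggo,\kg)$ is long and several near-solutions of the soliton equation must be carefully shown to be equivariantly isometric to an entry already in the table rather than a new example.
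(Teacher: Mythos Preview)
Your overall strategy is reasonable, but there is a genuine gap in Step~2 that would cause you to miss one of the entries in Table~\ref{ta4}. You assert that when the Levi factor $\sg\ne 0$, direct computation yields ``either no expanding soliton \ldots\ or a Riemannian product of a rank-one symmetric space with a Euclidean factor,'' and in your Main Obstacle paragraph you go further and claim that the structure theory forces $\ggo$ to be ``of solvable type unless the metric is Einstein.'' Both statements are false in dimension~$4$: the homogeneous space $(\Sl_2(\RR)\ltimes\RR^2)/\SO(2)$, with $\ggo=\slg_2(\RR)\ltimes\RR^2$ and $\theta$ the standard representation, carries a two-parameter family $\ip_{\alpha,\beta}$ of $G$-invariant metrics that are expanding algebraic solitons, are \emph{not} Einstein, and are \emph{not} Riemannian products (the sectional curvature takes both signs). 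Here $\sg=\slg_2(\RR)\ne 0$ and $\rg=\RR^2\ne 0$, so your Levi dichotomy simply misclassifies this example. The fact that these metrics turn out to be \emph{isometric} to solvsolitons requires an Iwasawa-type argument after the fact; it does not follow from the algebraic soliton equation directly, and Nikolayevsky's pre-Einstein derivation does not help here.

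Methodologically, the paper's route is also quite different from yours and considerably lighter. Rather than organizing by the Levi decomposition $\ggo=\sg\ltimes\rg$ and running through classification lists of low-dimensional Lie algebras, the paper uses the structure theorem for algebraic solitons (Theorem~\ref{solisemal}) to decompose $\ggo=\ug\ltimes\ngo$ with $\ngo$ the nilradical and $\ug=\kg\oplus\hg$ reductive, and then branches on $(\dim\ggo,\dim\ngo)$. This is more efficient for two reasons: first, conditions (i)--(iv) of Theorem~\ref{solisemal} give immediate obstructions (e.g.\ Lemma~\ref{lemadimn}, Corollaries~\ref{Unocomp} and~\ref{unosl2}) that rule out most reductive $\ug$ without computing any Ricci tensors; second, the solvable case is handled in one stroke by Lemma~\ref{gsol}, whereas your plan of ``running through the Bianchi list and the lists of solvable Lie algebras of dimension $4,5,6$'' is a substantial case-by-case computation. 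The only place the paper does a genuine metric calculation is precisely the $\Sl_2(\RR)\ltimes\RR^2$ case you overlooked.
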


\begin{table}
{\center
\begin{tabular}{|c|>{\centering}m{3cm}|c|>{\centering}m{8cm}|c|}
  \hline
 %  after \\: \hline or \cline{col1-col2} \cline{col3-col4} ...
 $\dim G$ & $G$ & $K$ & Metric & $G/K$ \\
  \hline
  $3$ & $S$ solvable Lie group & $e$ & Solvsoliton. & $S$ \\
  \hline
  \multirow{2}{*}{$4$} & $\SO(2) \ltimes S$ & $\SO(2)$ & Solvsoliton (see Corollary \ref{gsol}). & $S$ \\\cline{2-5}
    & $\Sl_2(\RR) \times \RR$ & $\SO(2)$ & Product of metrics of constant curvature on $\Sl_2(\RR)/\SO(2)$ and $\RR$. & ${\RR H}^2 \times \RR$ \\
  \hline
\end{tabular}}

\vs \caption{Non-Einstein simply connected expanding algebraic solitons of dimension $3$, up to equivariant isometry.}\label{ta3}
\end{table}

% \begin{table}
% {\center
% \begin{tabular}{|c|c|c|c|c|}
%   \hline
%  %  after \\: \hline or \cline{col1-col2} \cline{col3-col4} ...
%  $\dim G$ & $G$ & $K$ & Metric & $G/K$ \\
%   \hline
%   $3$ & $S$ solvable Lie  & $e$ & See the classification of solvsolitons in & $S$ \\
%   & group & & \cite[Table 1]{solvsolitons} and nilsolitons in \cite[Table 1]{Wll}.&\\
%   \hline
%   $4$ & $\SO(2) \ltimes S$ & $\SO(2)$ &See Corollary \ref{gsol} & $S$ \\\cline{2-5}
%     & $\Sl_2(\RR) \times \RR$ & $\SO(2)$ & Product of metrics of constant curvature  & ${\RR H}^2 \times \RR$ \\
%    & & & on $\Sl_2(\RR)/\SO(2)$ and $\RR$. &\\
%   \hline
% \end{tabular}}

% \vs \caption{Non-Einstein simply connected expanding algebraic solitons of dimension $3$, up to equivariant isometry.}\label{ta3}
% \end{table}

%See the classification of solvsolitons in \cite[Table 2]{solvsolitons} and nilsolitons in \cite[Table 2]{Wll}.

\begin{table}
{\center
\begin{tabular}{|c|c|c|>{\centering}m{4.7cm}|c|}
  \hline
  % after \\: \hline or \cline{col1-col2} \cline{col3-col4} ...
  $\dim G$ & $G$ & $K$ & Metric & $G/K$\\
  \hline
  $4$    &    $S$ solvable Lie group &  $e$ & Solvsoliton. &  $S$ \\
  \hline
  \multirow{3}{*}{$5$} & $\SO(2) \ltimes S$ & $\SO(2)$ & Solvsoliton (see Corollary~ \ref{gsol}). & $S$ \\ \cline{2-5}
      & $\Sl_2(\RR) \times \RR^2$ & $\SO(2)$ & Product of metrics of constant curvature on $\Sl_2(\RR)/\SO(2)$ and  $\RR^2.$ & $\RR H^2 \times \RR^2$ \\ \cline{2-5}
      & $\Sl_2(\RR) \ltimes \RR^2$  & $\SO(2)$ & Non-product metrics where the curvature takes both signs (see Section \ref{M4G5N2}). & $S$ \\
   \hline
  $6$ %&                                                  &                       & Product of a Einstein metric & $S^2\times \RR^2$ \\
      %& $\SO(3) \times \left(\SO(2)\ltimes \RR^2\right)$ & $\SO(2)\times \SO(2)$ & on $\SO(3)/\SO(2)$ and a flat & shrinking. \\
      %&                                                  &                       & metric on $\left(\SO(2)\ltimes \RR^2\right)/\SO(2).$ &\\ \cline{2-5}
  %    &                                                     &                       &    &\\
      & $\Sl_2(\RR)\times \left(\SO(2)\ltimes \RR^2\right)$ & $\SO(2)\times \SO(2)$ & Product of metrics of constant curvature on $\Sl_2(\RR)/\SO(2)$ and $\left(\SO(2)\ltimes \RR^2\right)/\SO(2)$.  & ${\RR H}^2 \times \RR^2$  \\
   \hline
  $7$ %& $\RR \times \SO(4)$ & $\SO(3)$ & Product of a metric of $\RR$ and a &  $S^3\times \RR$ \\
      %&                     &          & metric of positive constant curvature. & shri  &                        &          &                              & \\
     & $\RR \times \SO_0(3,1)$ & $\SO(3)$ &  Product of metrics of constant curvature on $\SO_0(3,1)/\SO(3)$ and $\RR$. & ${\RR H}^3\times \RR$ \\
     %&                         &          &   &\\
  \hline
\end{tabular}}
\vs \caption{Non-Einstein simply connected expanding algebraic solitons of dimension $4$, up to equivariant isometry.}\label{ta4}
\end{table}

We now make a few remarks concerning Tables \ref{ta3}, \ref{ta4} and \ref{ta5}. The presentation and notation is according to the classification of Riemannian homogeneous spaces of dimension $3$ and $4$ given in \cite{BB4}. In particular, all homogeneous spaces $G/K$ in those tables are assumed to be effective. The right column gives the underlying manifold (they are all diffeomoprhic to $\RR^n$), and $S$ always denotes a simply connected solvable Lie group of the corresponding dimension. Finally, in Table \ref{ta5}, $H_3$ denotes the $3$-dimensional Heisenberg group, and we refer the reader to Section \ref{m5g6n3} for a clarification of the notation $\theta_{\ad}$ and $\theta_{12}$.

The main tools used for the classification are the structural results for algebraic solitons given in \cite{alek}. Roughly speaking, it is shown there that for a simply connected algebraic soliton $(G/K,g)$ with $G$ simply connected, one has that $G$ splits as $G\simeq U \ltimes N$ with $N$ its nilradical and $U$ a reductive Lie group, $G/K = U/K \times N$ as differentiable manifolds, $(N,g_N)$ is a nilsoliton, and some additional compatibility conditions concerning $(U/K, g_{U/K}), (N,g_N)$ and the action of $U$ on $N$ must be satisfied. Here, $g_{U/K}$ and $ g_N$ are the induced metrics. See Theorem \ref{solisemal} for a more precise statement.

With the additional hypothesis that the homogeneous space $U/K$ is almost effective (which can always be assumed by suitably shrinking the transitive group $G$; see Corollary \ref{U/Kae}), a classification of $5$-dimensional simply connected expanding algebraic solitons up to equivariant isometry can be obtained. In order to simplify the presentation, we will also omit here the cases of \emph{trivial} solitons (product of an Einstein homogeneous manifold with a Euclidean space). 
%
%if and only if there exists a Riemannian submersion $(G/K,g) \to (U/K,g|_{U/K})$ with fibers isometric to $(N,g_N)$, where $N$ is the connected nilradical of $G$, $g_N$ is a nilsoliton metric, $U \simeq G/N$ is a reductive Lie group, and some additional properties regarding the fundamental O'Neill tensors of the submersion $A$ and $T$ are satisfied

\begin{teointro}\label{main5}
Any simply connected expanding algebraic soliton of dimension $5$ with the property that $U/K$ is almost effective is either the product of a non-compact Einstein homogeneous manifold with a Euclidean space, or is equivariantly isometric to one of the algebraic solitons in Table \ref{ta5}.
\end{teointro}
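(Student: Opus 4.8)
The plan is to specialise the structure theorem for algebraic solitons, Theorem~\ref{solisemal}, to dimension $5$. After replacing the transitive group by a suitable subgroup (Corollary~\ref{U/Kae}) we may assume $G\simeq U\ltimes N$ with $N$ the nilradical, $U$ reductive acting on $N$ with compact isotropy group $K$, $U/K$ almost effective, $G/K$ diffeomorphic to $U/K\times N$, $(N,g_N)$ a nilsoliton, $(U/K,g_{U/K})$ an Einstein space which --- the soliton being expanding --- is noncompact or a point, and the remaining compatibility conditions of Theorem~\ref{solisemal} in force. Writing $d=\dim N$, so that $\dim U/K=5-d$, I would run a finite case analysis on $d\in\{0,1,2,3,4,5\}$. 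The relevant inputs are: (i) if $U$ is compact then $U/K$ is a point and $G/K=N$ is a nilsoliton, hence omitted; (ii) the known low-dimensional classifications of noncompact homogeneous Einstein spaces, together with almost effectiveness, pin down the admissible pairs $(U,K)$; and (iii) when the nilradical $N$ is abelian and low-dimensional, the remaining $U$ (which turn out to be semisimple) act trivially on it.

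Running through the cases: for $d=5$, $U$ is trivial and $G=N$ is a nilsoliton; for $d=4$, $U/K$ is one-dimensional, hence flat, so $U=\RR$ and $G=\RR\ltimes N$ is a solvable Lie group, i.e.\ a solvsoliton; for $d=2$, $U/K=\RR H^3$ and $U=\SO_0(3,1)$, which --- being simple of dimension $6$ --- has no nontrivial representation on $N=\RR^2$, so $(G/K,g)=\RR H^3\times\RR^2$; for $d=1$, $U/K$ is one of $\RR H^4$, $\CC H^2$, $\RR H^2\times\RR H^2$ by the classification of Einstein homogeneous $4$-manifolds (\cite{Jns}), the semisimple $U$ acts trivially on $N=\RR$, and $(G/K,g)$ is the product of a noncompact Einstein manifold with $\RR$; and for $d=0$, $G/K=U/K$ is a $5$-dimensional negative Einstein homogeneous space, hence omitted. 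In each of these cases a flat $U/K$ leads again to the solvsoliton situation, up to extra compact isometries handled by Corollary~\ref{gsol}. Thus every case with $d\neq3$ falls among the omitted Einstein/solvsoliton examples or is the product of a noncompact Einstein homogeneous manifold with a Euclidean space, i.e.\ the first alternative of the statement.

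The substantive case is $d=3$. Here $U/K=\RR H^2$, and since a reductive group acting transitively and almost effectively on $\RR H^2$ is a finite cover of $\mathrm{PSL}_2(\RR)$, we may take $U=\Sl_2(\RR)$ and $K=\SO(2)$, leaving $G=\Sl_2(\RR)\ltimes N$ with $N\in\{\RR^3,H_3\}$. The plan is to enumerate the actions $\theta\colon\Sl_2(\RR)\to\Aut(N)$, equivalently the homomorphisms $\slg_2(\RR)\to\Der(N)$. As $\slg_2(\RR)$ is simple, for $N=H_3$ the only options are the trivial action and the one acting by the standard representation on $\hg_3/\zg(\hg_3)\cong\RR^2$ (and trivially on the center); for $N=\RR^3$, with $\Der(N)=\glg_3(\RR)$, the three-dimensional real $\slg_2(\RR)$-representations yield the trivial action, the reducible $V_2\oplus V_1$ (whose semidirect product is the product of the four-dimensional example $(\Sl_2(\RR)\ltimes\RR^2)/\SO(2)$ of Table~\ref{ta4} with $\RR$), and the irreducible $V_3$, i.e.\ the adjoint representation. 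For each candidate $(N,\theta)$ one imposes the compatibility conditions of Theorem~\ref{solisemal} together with the algebraic soliton equation $\ricci=cI+D_\pg$, $c<0$: computing $\ricci$ in an orthonormal frame of $\ggo=\kg\oplus\pg$ adapted to the splitting $\pg=\pg_U\oplus\ngo$ and to the $\SO(2)$-action reduces the problem to a finite system determining the admissible invariant metrics up to scaling, the trivial actions reproducing the Riemannian products $\RR H^2\times\RR^3$ (a product of a noncompact Einstein space with a Euclidean space) and $\RR H^2\times H_3$ (an entry of Table~\ref{ta5}). Finally, since two such algebraic solitons are equivariantly isometric exactly when there is a Lie algebra isomorphism $\ggo\to\ggo'$ carrying $\kg$ to $\kg'$ and restricting to an isometry $\pg\to\pg'$, distinguishing the resulting examples --- and checking that none is equivariantly isometric to a solvsoliton or an Einstein space --- is done by comparing metric invariants such as the Ricci eigenvalues and the set of values of the sectional curvature.

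The main obstacle is this case $d=3$: the curvature computation on $(\Sl_2(\RR)\ltimes N)/\SO(2)$ is where the real work lies, both for deciding which pairs $(\theta,g)$ satisfy all the compatibility conditions and the soliton equation, and for producing enough computable invariants to separate the resulting examples and to rule out equivariant isometries with the omitted cases. A secondary technical point, to be settled at the outset, is that passing to an almost effective presentation via Corollary~\ref{U/Kae} is the reason why $G$ need not be simply connected in the statement (only $G/K$ is), and one should check that the presentations appearing in Table~\ref{ta5} are the intended minimal ones.
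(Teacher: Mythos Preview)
Your proposal contains a fundamental misreading of Theorem~\ref{solisemal}: you assert that $(U/K,g_{U/K})$ is Einstein, but condition~(ii) says only that $\ricci_\ug = cI + C_\theta$, where $C_\theta$ is the nonnegative symmetric operator built from the action $\theta$ of $\ug$ on $\ngo$. The factor $U/K$ is Einstein only in special situations (e.g.\ when $\theta=0$, the product case), and in general it is not. This error propagates through your entire case analysis. For $d=\dim N=2$ you claim $U/K=\RR H^3$ with $U=\SO_0(3,1)$, but in fact one can also have $\dim U=4$ with $\ug=\RR\oplus\slg_2(\RR)$ and $U/K=\RR\times\RR H^2$ carrying a product metric that is \emph{not} Einstein; this is precisely the source of the first row of Table~\ref{ta5} (the $\Sl_2(\RR)\times S$ entry), which your argument misses. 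Likewise for $d=1$ the possibility $U=\RR\times\SO_0(3,1)$, $U/K=\RR\times\RR H^3$ arises and is not covered by the Jensen classification you invoke. You also overlook the cases with $K=\{e\}$ (i.e.\ $\dim G=5$), where $U/K$ is a left-invariant metric on a $3$- or $4$-dimensional reductive Lie group and one must instead appeal to Corollaries~\ref{Unocomp} and~\ref{unosl2}.

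The paper's organization is accordingly different: the case analysis is by the pair $(\dim G,\dim N)$, not by $\dim N$ alone. Almost-effectiveness of $U/K$ gives the bound $\dim G\leq\dim N+\tfrac12(\dim U/K)(\dim U/K+1)$, which cuts the list of pairs down to a finite one; for each pair the reductive Lie algebra $\ug$ is classified directly, the admissible $\theta$ and inner products are determined via conditions (ii)--(iv) of Theorem~\ref{solisemal}, and Corollaries~\ref{Unocomp}, \ref{unosl2} are used to eliminate the compact and $\slg_2(\RR)$-with-trivial-isotropy possibilities. Your treatment of the $d=3$ case is essentially the paper's (there $U/K$ is $2$-dimensional homogeneous, hence a space form, so the Einstein assumption happens to hold), but the other cases need to be redone.
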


\begin{table}
{\center
\begin{tabular}{|c|c|c|>{\centering}m{7cm}|c|}
  \hline
  $\dim G$ & $G$ & $K$ & Metric & $G/K$\\
  \hline
  $5$ & $S$ solvable Lie group & $e$ & Solvsoliton. &  $S$ \\

  \hline
  \multirow{4}{*}{$6$} & $\Sl_2(\RR) \times S$ & $\SO(2)$ & Product of an Einstien metric on $\Sl_2(\RR)/\SO(2)$ and a $3$D solvsoliton with the same cosmological constants. & $\RR H^2 \times S$  \\ \cline{2-5}
    % & $\U(1,1) \ltimes \RR^2$ & $\U(1)$ & . & $\widetilde{\Sl_2(\RR)} \times \RR^2$ \\ \cline{2-5}
     & $\Sl_2(\RR) \ltimes_{\theta_{\ad}} \RR^3$ &  $\SO(2)$ & Determined by $\ip_{\alpha,\beta,\beta}$ on $T_{eK}G/K$ (see Section~ \ref{m5g6n3}). & $S$ \\ \cline{2-5}
     & $\Sl_2(\RR) \ltimes_{\theta_{12}} \RR^3$ &  $\SO(2)$ & Any $G$-invariant metric. & $S \times \RR$ \\ \cline{2-5}
%     & $\Sl_2(\RR) \times H_3$ &  $\SO(2)$ & Product of an Einstein metric and the nilsoliton $H_3$. & $\RR H^2 \times H_3$ \\ \cline{2-5}
     & $\Sl_2(\RR) \ltimes_{\theta_{12}} H_3$ &  $\SO(2)$ & Non-product, determined by $\ip_{{4\beta^2/\gamma},\beta,\gamma}$ on $T_{eK}G/K$ (see Section~ \ref{m5g6n3}). & $S$ \\
  \hline
\end{tabular}
}
\vs \caption{\emph{Non-trivial} simply connected expanding algebraic solitons of dimension $5$ with $U/K$ almost effective, up to equivariant isometry.}\label{ta5}
\end{table}

One of the most important open problems on Einstein homogeneous manifolds is the following (see \cite[7.57]{Bss}):
\begin{AC}
Any connected homogeneous Einstein manifold of negative scalar curvature is diffeomorphic to a Euclidean space.
\end{AC}
Recently, it was proved in \cite{alek} (and also in \cite{HePtrWyl} by a different approach) that the conjecture is actually equivalent to the following analogous statement for expanding algebraic solitons:
\begin{GAC}
Any expanding algebraic soliton is diffeomorphic to a Euclidean space.
\end{GAC}

The proofs of Theorems \ref{main34} and \ref{main5} together with the results on the Einstein case from \cite{Jns}, \cite{Nkn1}, and the reduction to the simply connected case given in \cite[Theorem 1.1]{Jbl13c}, immediatly imply the following:

\begin{corollary}\label{cormain}
Any simply connected expanding algebraic soliton of dimension up to $5$ is isometric to a solvsoliton. In particular, the generalized Alekseevskii conjecture holds up to dimension~ $5$.
\end{corollary}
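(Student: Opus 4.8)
The plan is to read the statement off the classifications in Theorems~\ref{main34} and~\ref{main5}, checking case by case that each metric listed there is isometric to a solvsoliton, and to complement this with the known classification of Einstein homogeneous manifolds of non-positive scalar curvature in the relevant dimensions.

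First I would dispose of dimensions $\leq 2$: the only expanding algebraic solitons there are the flat $\RR^n$, which is a (Gaussian) solvsoliton on the abelian Lie group $\RR^n$ with soliton derivation $D=-cI$, and the hyperbolic plane $\RR H^2$, an Einstein solvmanifold. In dimensions $3$ and $4$, by Theorem~\ref{main34} a simply connected expanding algebraic soliton is either Einstein or equivariantly isometric to an entry of Tables~\ref{ta3} and~\ref{ta4}. If it is Einstein with negative scalar curvature, then it has constant curvature in dimension $3$ and is a symmetric space in dimension $4$ by \cite{Jns}; in either case it is an Einstein solvmanifold, hence a solvsoliton (and the flat case is a solvsoliton too). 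Among the table entries, those labeled ``Solvsoliton'' need nothing further; the ``product of constant-curvature metrics'' entries have underlying Riemannian manifold $\RR H^k\times\RR^m$, hence are isometric to the left-invariant metric on the simply connected solvable group $S_0\times\RR^m$, with $S_0$ the Iwasawa solvable group of the hyperbolic factor, and this metric is a solvsoliton with $\ricci=cI+D$ where $D$ vanishes on $\sg_0$ and equals $-cI$ on $\RR^m$, which is plainly a derivation; finally, the lone non-product entry $\Sl_2(\RR)\ltimes\RR^2$ is shown to be isometric to a solvsoliton in Section~\ref{M4G5N2}.

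In dimension $5$ I would first apply Corollary~\ref{U/Kae} to shrink the transitive group so that $U/K$ becomes almost effective (this does not alter the Riemannian manifold), and then invoke Theorem~\ref{main5}. If the soliton is the product of a non-compact Einstein homogeneous manifold with a Euclidean space, then the Einstein factor, of dimension at most $5$ and of non-positive scalar curvature, is an Einstein solvmanifold by \cite{Jns},~\cite{Nkn1} (the constant-curvature case for dimensions $\leq 3$, and \cite[Theorem~1]{AlkKml} in the Ricci-flat case), and a product of an Einstein solvmanifold with $\RR^m$ is again a solvsoliton. Otherwise it is one of the entries of Table~\ref{ta5}: the solvable-group entry and the product $\Sl_2(\RR)\times S=\RR H^2\times S$ (an Einstein metric on $\RR H^2$ times a $3$-dimensional solvsoliton with the same cosmological constant, hence a solvsoliton whose soliton derivation vanishes on the $\RR H^2$ factor) are immediate, while the remaining entries $\Sl_2(\RR)\ltimes_{\theta_{\ad}}\RR^3$, $\Sl_2(\RR)\ltimes_{\theta_{12}}\RR^3$ and $\Sl_2(\RR)\ltimes_{\theta_{12}}H_3$ are shown to be isometric to solvsolitons in Section~\ref{m5g6n3}. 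This proves the first assertion; for the statement on the Generalized Alekseevskii Conjecture, I would drop the simple-connectedness hypothesis using \cite[Theorem~1.1]{Jbl13c}, which reduces the conjecture to the simply connected case, and then note that solvsolitons, being left-invariant metrics on simply connected solvable Lie groups, are diffeomorphic to Euclidean space.

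The argument is essentially bookkeeping once Theorems~\ref{main34} and~\ref{main5} are available. The only step not reducible to a direct citation is verifying that the non-product entries $\Sl_2(\RR)\ltimes\RR^2$, $\Sl_2(\RR)\ltimes_{\theta_{\ad}}\RR^3$, $\Sl_2(\RR)\ltimes_{\theta_{12}}\RR^3$ and $\Sl_2(\RR)\ltimes_{\theta_{12}}H_3$ --- where the transitive group is non-solvable and the curvature changes sign --- are genuinely \emph{isometric}, not merely diffeomorphic, to left-invariant solvsoliton metrics on a solvable Lie group; this is carried out in Sections~\ref{M4G5N2} and~\ref{m5g6n3} by exhibiting a solvable subgroup acting simply transitively by isometries and comparing the induced metrics, and it is the real point where one must do work rather than quote a theorem.
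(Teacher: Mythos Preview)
Your proposal is correct and follows exactly the approach the paper indicates: the paper does not give a separate proof of Corollary~\ref{cormain} but states just before it that the result follows immediately from the proofs of Theorems~\ref{main34} and~\ref{main5}, the Einstein classifications in \cite{Jns} and \cite{Nkn1}, and the reduction to the simply connected case via \cite[Theorem~1.1]{Jbl13c}. Your write-up is a faithful unpacking of this, including the observation that the non-product table entries are shown to be isometric to solvsolitons in Sections~\ref{M4G5N2} and~\ref{m5g6n3}.
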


\begin{remark}
Notice that the above statement for the ``generalized Alekseevskii conjecture'' is precisely the one given in \cite[\S 6]{alek} (though it was not given that name there). This is slightly different from the stronger version stated in \cite{Jbl13c}, where the conclusion ``diffeomorphic to a Euclidean space'' has been replaced by ``isometric to a solvsoliton''. It is yet unkonwn if this later statement is equivalent to the original Alekseevskii conjecture.
\end{remark}

Finally, we verify that the generalized Alekseevskii conjecture holds for simply connected spaces of dimension $6$, except for the cases where there is a transitive semisimple group. As far as we know, this was not previously known even in the case of Einstein metrics and the original Alekseevskii conjecture.

\begin{teointro}\label{main6}
If $(G/K,g)$ is a simply connected expanding algebraic soliton of dimension $6$ and $G$ is not semisimple, then $G/K$ is diffeomorphic to $\RR^n$.
\end{teointro}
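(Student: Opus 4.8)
The plan is to reduce, via the structural Theorem~\ref{solisemal}, to the already-settled low-dimensional case recorded in Corollary~\ref{cormain}.

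First I would pass to a simply connected presentation, exactly as in the proofs of Theorems~\ref{main34} and~\ref{main5} (using \cite[Theorem 1.1]{Jbl13c}); since $G/K$ is simply connected, this reduction keeps $G$ non-semisimple and $K$ connected. Theorem~\ref{solisemal} then gives $G \simeq U \ltimes N$ with $N$ the nilradical and $U$ reductive, and $G/K = U/K \times N$ as smooth manifolds. Since $U$ is simply connected, the decomposition $\ggo$-style splitting $\ug = \zg(\ug)\oplus\ug_{ss}$ yields $U \cong Z \times U_{ss}$ with $Z \cong \RR^{\dim\zg(\ug)}$ central and $U_{ss}$ simply connected semisimple; as $K$ is compact and $Z$ has no nontrivial compact subgroup, $K \subseteq U_{ss}$. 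The left $Z$-action on $U/K$ is then free with closed orbits, so $U/K \to U_{ss}/K$ is a principal $Z$-bundle, hence trivial because $Z$ is contractible; and $N$ is diffeomorphic to $\RR^{\dim\ngo}$. Consequently $G/K$ is diffeomorphic to $\RR^{\,\dim\zg(\ug)+\dim\ngo}\times (U_{ss}/K)$, and it remains only to show that $U_{ss}/K$ is diffeomorphic to $\RR^m$, where $m=\dim U_{ss}/K$.

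Since $G$ is not semisimple, its radical $\rad(\ggo)=\zg(\ug)\oplus\ngo$ is nontrivial, so $\dim\zg(\ug)+\dim\ngo\ge 1$ and hence $m = 6-\dim\zg(\ug)-\dim\ngo\le 5$. Next I would unwind the compatibility conditions of Theorem~\ref{solisemal} to identify the induced metric on the semisimple factor $U_{ss}/K$: these conditions force it to be Einstein with negative Einstein constant (or $U_{ss}/K$ to be a single point, when $U_{ss}=K$). In particular $U_{ss}/K$ is a simply connected expanding algebraic soliton of dimension at most $5$ --- simple connectivity being inherited from $G/K$. Corollary~\ref{cormain} then shows that $U_{ss}/K$ is diffeomorphic to $\RR^m$, and by the previous paragraph $G/K$ is diffeomorphic to $\RR^6$.

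The main obstacle is the middle step: extracting from Theorem~\ref{solisemal} that the semisimple factor $U_{ss}/K$, with its induced metric, is genuinely Einstein of negative scalar curvature, equivalently a lower-dimensional expanding algebraic soliton. This requires controlling the possibly nontrivial action of the central factor $Z$ on $N$, tracking how the derivation $D\in\Der(\ggo)$ splits along $\ggo=\zg(\ug)\oplus\ug_{ss}\oplus\ngo$ (using that any derivation of a reductive Lie algebra preserves its center and its semisimple ideal and restricts to an inner derivation on the latter), and ruling out a positive Einstein constant --- which amounts to showing that no compact simple factor of $U_{ss}$ can act nontrivially on $U_{ss}/K$ within an expanding algebraic soliton. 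A secondary point is to check that the reduction to simply connected $G$ is compatible with the hypothesis that $G$ is not semisimple.
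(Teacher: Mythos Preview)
Your topological reduction --- $G/K \simeq Z \times N \times (U_{ss}/K)$ once $G$ is taken simply connected and $K\subseteq U_{ss}$ --- is correct and clean. The gap is exactly where you locate it, and it is not a technicality but the entire content of the theorem. Theorem~\ref{solisemal}(ii) gives $\ricci_\ug = cI + C_\theta$ on $\hg$, not $\ricci_\ug = cI$; there is no mechanism in that theorem forcing the restriction of the metric to $U_{ss}/K$ to be Einstein, nor forcing $C_\theta$ to be constant along the semisimple directions. In the configurations the paper actually has to exclude (e.g.\ $\ug$ with a compact simple summand such as $\sug(2)$ or $\sog(3)$ not entirely contained in $\kg$), the induced metric on $U_{ss}/K$ \emph{cannot} be negative Einstein --- those factors carry nonnegative Ricci --- so your claim ``these conditions force $U_{ss}/K$ to be Einstein with $c<0$'' can only mean ``these configurations do not occur''. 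Establishing that is precisely the case analysis. Note also that if your middle step were available in the generality you suggest, the same argument would give an inductive proof of the generalized Alekseevskii conjecture for all non-semisimple $G$ in every dimension, which is not known.

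The paper's proof is organized differently. It first uses Proposition~\ref{unimod} and Corollary~\ref{cormain} to reduce to $G$ unimodular, so $\dim\ngo\ge 2$ (the case $\dim\ngo=1$ being a product by Lemma~\ref{lemadimn}\eqref{lemadimn-n1uss}). Then, after invoking Corollary~\ref{U/Kae} to make $U/K$ almost effective and using the bound~\eqref{B}, it runs through the finitely many possibilities for $(\dim\ggo,\dim\ngo)$. For each, it consults the classification of $3$- and $4$-dimensional homogeneous spaces in \cite{BB4} to list the candidate $U/K$, and rules out those with a compact factor by explicit contradictions with Theorem~\ref{solisemal}(ii) (the $\ricci_\ug=cI+C_\theta$ equation) and with Corollary~\ref{Unocomp}. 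The structural input you hoped to use abstractly --- that compact pieces of $U$ are incompatible with $c<0$ --- is obtained here only through these concrete computations in each case.
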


The organization of the paper will be as follows: In Section \ref{pre} we will review some basic facts about Riemannian homogeneous spaces, and we will also briefly introduce the reader to the theory of homogeneous Ricci solitons. The main results from \cite{alek} on the algebraic structure of algebraic solitons will be also presented there, together with some new applications. The proof of Theorem \ref{main34} will be given in Sections \ref{dim3} and \ref{dim4}. Theorem \ref{main5} will be proved in Section \ref{dim5}, and Theorem \ref{main6} in Section \ref{dim6}. Finally, at the end there is an appendix kindly written by Jorge Lauret, in which some additional results regarding the structure of algebraic solitons will be given.

\vs \noindent {\it Acknowledgements.} It is our pleasure to thank Jorge Lauret for very fruitful discussions, for his useful comments on a first version of this paper, and for writing the Appendix.

\section{Preliminaries}\label{pre}

\subsection{Riemannian homogeneous spaces}\label{prehomog}$\,$

We review in this section some well-known facts about homogeneous spaces with invariant Riemannian metrics. For a more detailed exposition on this matter, we refer the reader to \cite[\S 2]{alek}. See also \cite{BB4}, where there is a similar treatment of these topics (notice, however, that the notation there is slightly different from ours).

A \emph{Riemannian homogeneous space} $(G/K,g)$ is a differentiable manifold $G/K$ where $G$ is a Lie group and $K\subseteq G$ a closed subgroup, endowed with a $G$-invariant Riemannian metric (i.e.~a metric such that the natural action of $G$ on the coset space $G/K$ is by isometries). In this paper, we will assume that all manifolds are connected, and that $G$ is connected as well, unless otherwise stated. We will also assume that the Lie subgroup of the full isometry group $\Iso(G/K,g)$ corresponding to $G$ is closed.

Two Riemannian homogeneous spaces $(G_1/K_1,g_1)$ and $(G_2/K_2,g_2)$ are said to be \emph{equivariantly isometric} if there exists a Lie group isomorphism $\Phi: G_1 \to G_2$ with $\Phi(K_1) = K_2$ such that the induced diffeomorphism $\varphi : G_1/K_1 \to G_2/K_2$ is an isometry. This is the most natural equivalence relation between Riemannian homogeneous spaces, and it is of course stronger than Riemannian isometry.

Given a Riemannian homogeneous space $(G/K,g)$, if $\ggo = \Lie(G)$, $\kg = \Lie(K) \subseteq \ggo$, we let $\pg$ be the orthogonal complement of $\kg$ with respect to the Killing form of $\ggo$ (which is negative definite on $\kg$). Then, $\pg$ is a reductive complement for $\kg$, there is a natural identification $\pg \simeq T_{eK} G/K$, and $\ip = g(eK)$ is an inner product on $\pg$ which is $\Ad(K)$-invariant. We call the data set $(\ggo = \kg \oplus \pg, \ip)$ a \emph{metric reductive decomposition}. The action of $G$ on $G/K$ is \emph{almost effective} if the isotropy representation $\ad : \kg \to \End(\pg)$ is faithful, and it is effective if $\Ad : K \to \Gl(\pg)$ is faithful (in this last case, $K$ turns out to be compact). Recall that if $G/K$ is simply connected then $K$ is necessarily connected.

Conversely, to a given metric reductive decomposition we can associate a simply connected Riemannian homogeneous space, which is uniquely determined up to equivariant isometry if we assume either that $G$ is simply connected, or that the action is effective. Indeed, given $(\ggo = \kg \oplus \pg, \ip)$, one may take $G$ the simply connected Lie group with Lie algebra $\ggo$, $K$ the connected Lie subgroup of $G$ corresponding to $\kg$ (which will be assumed to be closed), and then on the manifold $G/K$ we define a Riemannian metric $g$ which is given by left translation by elements of $G$ of the inner product $\ip$ defined on $\pg \simeq T_{eK} G/K$.

There are some restrictions for the dimension of $G$ in terms of the dimension of the manifold, given by the following classical result.

\begin{proposition}\cite{Wang47}\label{gap}
If $(M^n,g)$ is a Riemannian manifold with $n \neq 4$, then the isometry group $\Iso(M,g)$ does not contain any closed subgroup of dimension $d$ with
\begin{equation}\label{wang}
\tfrac{n(n-1)}{2}+1 < d < \tfrac{n(n+1)}{2}.
\end{equation}
\end{proposition}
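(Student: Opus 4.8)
The plan is to estimate $\dim G$ for an arbitrary closed subgroup $G\subseteq\Iso(M,g)$ and show it can never lie in the open interval $\left(\tfrac{n(n-1)}{2}+1,\tfrac{n(n+1)}{2}\right)$ when $n\neq4$; passing to the identity component, I may assume $G$ connected. The basic tool is the linear isotropy representation: for $p\in M$ the homomorphism $G_p\to\Or(\tang_pM)$, $\phi\mapsto\dif\phi_p$, is injective — an isometry fixing $p$ whose differential at $p$ is the identity fixes a normal neighbourhood of $p$ pointwise, hence all of $M$ by connectedness — and $G_p$ is compact since $\Iso(M,g)$ acts properly, so $G_p$ is (isomorphic to) a closed subgroup of $\Or(n)$.

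Next I would choose $p\in M$ so that the orbit $G\cdot p$ has maximal dimension $m$, giving $\dim G=m+\dim G_p$. Since $G_p$ fixes $p$ it preserves the subspace $\tang_p(G\cdot p)\subseteq\tang_pM$ together with its orthogonal complement, hence embeds into $\Or(m)\times\Or(n-m)$ (the second factor being trivial when $m=n$), whence
\[
\dim G\;\le\; m+\tfrac{m(m-1)}{2}+\tfrac{(n-m)(n-m-1)}{2}\;=\;m^{2}-(n-1)m+\tfrac{n(n-1)}{2}.
\]
The right side is a convex quadratic in $m$ equal to $\tfrac{n(n-1)}{2}$ at $m=0$ and at $m=n-1$, so as soon as the maximal orbit has dimension $m\le n-1$ one gets $\dim G\le\tfrac{n(n-1)}{2}$, safely below the forbidden range, with no restriction on $n$ used. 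The only remaining possibility is that $G$ has an open orbit, $m=n$, in which case the estimate degenerates to the trivial $\dim G\le\tfrac{n(n+1)}{2}$.

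To dispose of the case $m=n$ I would invoke the key group-theoretic fact that \emph{for $n\neq4$ every proper closed subgroup $H\subsetneq\Or(n)$ satisfies $\dim H\le\tfrac{(n-1)(n-2)}{2}$}, with equality realised by $\Or(n-1)$. Applied to $H=G_p$ this yields either $\dim G_p=\tfrac{n(n-1)}{2}$, so $\dim G=\tfrac{n(n+1)}{2}$, or $\dim G_p\le\tfrac{(n-1)(n-2)}{2}$, so $\dim G\le n+\tfrac{(n-1)(n-2)}{2}=\tfrac{n(n-1)}{2}+1$; in either case $\dim G$ misses the open interval, completing the argument. For the group-theoretic fact itself I would again split according to whether $H$ acts transitively on the unit sphere $S^{n-1}$: if it does not, an $H$-orbit of maximal dimension $m'\le n-2$ lets me rerun the same convexity estimate one dimension down ($H_v\hookrightarrow\Or(m')\times\Or(n-1-m')$) to get $\dim H\le\tfrac{(n-1)(n-2)}{2}$ for every $n$; if it does, then $S^{n-1}=H/H_v$ with $H_v\subseteq\Or(n-1)$, and one checks, using the classification of transitive actions of compact connected Lie groups on spheres, that such a proper $H$ obeys the same bound exactly when $n\neq4$.

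The hard part is this last step. The crude recursion above only produces $\dim H\le\tfrac{(n-1)(n-2)}{2}+1$ in the transitive case, and eliminating the remaining unit of slack — equivalently, ruling out a subgroup of $\SO(n)$ of dimension one below $\dim\Or(n-1)$ when $n\neq4$ — is precisely where the actual structure of $\SO(n)$ (rather than mere dimension bookkeeping) must be used, and where dimension $4$ is genuinely exceptional: there $\U(2)\subset\SO(4)$ attains the forbidden value, reflecting the non-simplicity of $\sog(4)$, and accordingly $\CC P^2$ and $\CC H^2$ are true exceptions to the gap in dimension $4$.
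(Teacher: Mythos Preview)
The paper does not prove this proposition; it is simply quoted as a classical result of Wang and used as a black box in the later sections. So there is no ``paper's own proof'' to compare against.

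Your outline is essentially the standard route to Wang's theorem. The reduction to the open-orbit case via the convex estimate $\dim G\le m^2-(n-1)m+\tfrac{n(n-1)}{2}$ is clean and correct, and you have rightly isolated the only genuine difficulty: the subgroup gap in $\Or(n)$, namely that for $n\neq4$ no proper closed subgroup has dimension strictly between $\tfrac{(n-1)(n-2)}{2}$ and $\tfrac{n(n-1)}{2}$. You are also honest that your recursion leaves a unit of slack in the transitive case and that closing it requires real input --- either the Montgomery--Samelson/Borel classification of compact groups acting transitively on spheres, or a direct analysis of maximal subalgebras of $\sog(n)$. Your identification of $\U(2)\subset\SO(4)$ as the obstruction, and of $\CC\PP^2$, $\CC H^2$ (with $8$-dimensional isometry group, landing in the forbidden range $(7,10)$) as the resulting exceptions in dimension $4$, is exactly right.

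In short: this is a sound sketch with one explicitly flagged gap, not a complete proof. Since the paper itself offers no argument and treats the result as known, your attempt already goes further than the paper does.
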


Recall that if $\dim \Iso(M,g)=\tfrac{n(n+1)}{2}$ then $(M,g)$ has constant sectional curvature (see \cite[1.79]{Bss}).

\subsection{Homogeneous Ricci solitons}$\,$

Recall that a complete Riemannian manifold $(M,g)$ is called a \emph{Ricci soliton} if its Ricci tensor satisfies \eqref{defsoliton}. The constant $c$ is often called the \emph{cosmological constant}. Ricci solitons are natural generalizations of the concept of Einstein metrics. In addition, they are precisely the self-similar solutions to the Ricci flow. Indeed, $g$ is a Ricci soliton if and only if the one-parameter family of metrics
\[
  g(t) = (-2ct+1) \varphi^*_t g
\]
is a solution to the Ricci flow, where $\varphi_t$ is (up to time reparameterization) the one-parameter family of diffeomorphisms of $M$ generated by $X$.

When $(M,g)$ is homogeneous, we can present it as a Riemannian homogeneous space $(G/K,g)$, and the fact that the isometry group is preserved by the Ricci flow (see \cite{Kot}) allows us to use the same presentation group $G$ for all metrics $g(t)$. If $X$ is such that $\varphi_t$ are $G$-equivariant diffeomorphisms (by this we mean that they are defined by Lie automorphisms of $G$ that leave $K$ invariant), then $(G/K,g)$ is called a \emph{semi-algebraic soliton}. It was proved in \cite{Jbl} that every homogeneous Ricci soliton is a semi-algebraic soliton with respect to its full isometry group~ $\Iso(M,g)$.

It is also shown in \cite{Jbl} that if we consider a reductive decomposition $(\ggo = \kg \oplus \pg,\ip)$ for a semi-algebraic soliton $(G/K,g)$, then its Ricci operator (the $(1,1)$-Ricci tensor, at the point~ $eK$) is given by
\[
  \Ricci(g) = c I + S(D_\pg)\in \End(\pg), \qquad c\in \RR,
\]
where $D_\pg$ is the restriction to $\pg$ of a derivation $D\in \Der(\ggo)$ with $D\kg = 0$, and $S(A) = \unm (A + A^t)$.

Recently, it was proved that every semi-algebraic soliton is actually \emph{algebraic} (see \cite{alek} for the unimodular case, and \cite{Jbl13b} for the non-unimodular case), meaning that the derivation $D$ in the previous formula may be chosen to be symmetric, so that $\Ricci(g) = c I + D_\pg$. Thus, in order to classify homogeneous Ricci solitons we may restrict ourselves to classify algebraic solitons. This is important because algebraic solitons satisfy much nicer structural properties beyond the fact of $D$ being symmetric.

The concept of algebraic soliton generalizes that of a \emph{solvsoliton}, i.e.~a simply connected solvable Lie groups $S$ endowed with a left-invariant metric $g$ that satisfies $\Ricci(g)=c I + D$, with $D\in \Der(\sg)$, $\sg=\Lie(S)$ (they are called \emph{nilsolitons} in the nilpotente case). Up to know, all known-examples of expanding homogeneous Ricci solitons are isometric to a solvsoliton.

From results due to Ivey, Naber and Petersen-Wylie (see \cite{Iv93}, \cite{Nab10}, \cite{PW}), any \emph{non-trivial} (that is, non Einstein nor the product of an Einstein homogeneous manifold with a Euclidean space) Ricci soliton must necesarily be non-compact, \emph{expanding} ($c<0$) and non-gradient.
 %This is why we will focus on classifying expanding algebraic solitons.

See \cite{homRS} and \cite{Jbl} for a more detailed exposition on homogeneous Ricci solitons.

\subsection{Algebraic structure of homogeneous Ricci solitons and its consequences}$\,$

In this section, we will briefly recall the main structural results for homogeneous Ricci solitons given in \cite{alek}, since they will be constantly used along this paper.

Let $(G/K,g)$ be an almost effective Riemannian homogeneous space endowed with a metric reductive decomposition $\mrd$. Let us further decompose $\pg$ as $\pg = \hg \oplus \ngo$, where $\ngo$ is the nilradical of $\ggo$ (which is contained in $\pg$ because it is Killing-orthogonal to $\kg$), and $\la \hg,\ngo \ra = 0$. Then, $\ggo$ decomposes as follows:
\begin{equation}\label{descdeg}
    \ggo= \rlap{$\overbrace{\phantom{\kg\oplus\hg}}^\ug$} \kg \oplus \underbrace{\hg\oplus\ngo}_\pg,
\end{equation}
where $\ug = \kg \oplus \hg$ is just a vector subspace, and the Lie bracket satisfies the following relations (see \cite[\S 2]{alek}):
\begin{equation}\label{relengkhn}
[\kg,\kg] \subset \kg, \quad [\kg, \hg] \subset \hg, \quad [\ggo, \ngo] \subset \ngo.
\end{equation}
The next theorem is the main result of \cite{alek}, and gives necessary and sufficient conditions for the above data in order to define an expanding algebraic soliton.
%This result will be used along the paper.

\begin{theorem}\cite[Theorem 4.6]{alek}\label{solisemal}
Let $(G/K,g)$ be a Riemannian homogeneous space endowed with a metric reductive decomposition $(\ggo=\kg \oplus \pg, \ip)$ such that $B(\kg,\pg)=0$, where $B$ is the Killing form of $\ggo$, and consider in $\ggo$ the decomposition (\ref{descdeg}). If $(G/K,g)$ is an algebraic soliton with cosmological constant $c<0$, then the following conditions hold:
\begin{itemize}
\item[(i)] $[\hg,\hg] \subset \kg \oplus \hg.$ In particular, $\ug= \kg \oplus \hg$ is a reductive Lie subalgebra of $\ggo$ (i.e. $\ug$ is the direct sum of a semisimple ideal and its center), and $\ggo=\ug \ltimes \ngo$.
\item[(ii)] $\ricci_{\ug}=cI+C_{\theta},$ where $\ricci_{\ug}$ is the Ricci operator of the metric reductive decomposition $(\ug= \kg \oplus \hg,\ip|_{\hg \times \hg})$, and $C_{\theta}$ is the symmetric operator defined by
    \[
    \la C_{\theta}Y,Y\ra=\tr(S(\ad Y|_{\ngo})^2), \quad \forall Y \in \hg.
    \]
    Here, $\theta: \ug \to \Der(\ngo)$ is the Lie algebra representation of $\ug$ given by $\theta(Y) = \ad Y|_\ngo$.
\item[(iii)] $\ricci_{\ngo}=cI+D_1,$ for some $D_1 \in \Der(\ngo),$ where $\ricci_{\ngo}$ denotes the Ricci operator of the nilpotent metric Lie algebra $(\ngo, \ip|_{\ngo \times \ngo})$ (i.e. $(\ngo, \ip|_{\ngo \times \ngo})$ is a nilsoliton).
\item[(iv)] $\sum [\ad Y_i|_{\ngo}, (\ad Y_i|_{\ngo})^t]=0,$ where $\{Y_i\}$ is any orthonormal basis of $\hg$ (in particular, $(\ad Y|_{\ngo})^t \in \Der(\ngo)$ for all $Y \in \hg$).
\item[(v)] The Ricci operator of $(G/K,g)$ is given by $\ricci = c I + D_{\pg},$ where
$$
D:= -S(\ad H) + \left(
               \begin{array}{ccc}
                 0 &  &  \\
                  & 0 &  \\
                  &  & D_1 \\
               \end{array}
             \right) \in \Der(\ggo),
$$ and $H \in \ggo$ is the unique element that satisfies $\la H,X \ra = \tr \ad X,$ for all $X \in \ggo.$
\end{itemize}
Conversely, if conditions (i)-(iv) hold and $G/K$ is simply connected, then $(G/K,g)$ is an algebraic soliton with cosmological constant $c$ and derivation $D$ as above.
\end{theorem}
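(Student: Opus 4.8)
The engine of the proof is the standard formula for the Ricci operator of a Riemannian homogeneous space attached to a metric reductive decomposition (see \cite[7.38]{Bss}), used together with the hypothesis $B(\kg,\pg)=0$ and the relations \eqref{relengkhn}. Two preliminary remarks cut down the bookkeeping. First, the nilradical $\ngo$ is a characteristic ideal of $\ggo$, so $D\ngo\subset\ngo$ for every $D\in\Der(\ggo)$; since in the forward direction $D$ is symmetric with $D\kg=0$, it follows that $D\pg\subset\pg$ and hence $D\hg\subset\hg$, so $D$ respects the orthogonal decomposition \eqref{descdeg}. Second, being symmetric $D$ is diagonalizable over $\RR$, so $\ggo=\bigoplus_\lambda\ggo_\lambda$ with $[\ggo_\lambda,\ggo_\mu]\subset\ggo_{\lambda+\mu}$ and $\kg\subset\ggo_0$; this eigenvalue grading is the device that will organize the argument.

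I would prove (i) first, and I expect it to be the main obstacle. Since $D_\pg$ preserves $\hg$ and $\ngo$, the operator $\ricci=cI+D_\pg$ is block diagonal for $\pg=\hg\oplus\ngo$; in particular the $\hg$--$\ngo$ block of the Ricci operator, which can also be written out directly from the homogeneous Ricci formula, must vanish. That identity is a quadratic relation coupling $[\hg,\hg]_\ngo$ with $[\hg,\ngo]$ and $[\ngo,\ngo]$; feeding in the derivation identity $D_1[Y_1,Y_2]_\ngo=[DY_1,Y_2]_\ngo+[Y_1,DY_2]_\ngo$ for $Y_i\in\hg$, the grading, and the sign $c<0$, one should be able to force $[\hg,\hg]_\ngo=0$. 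This is exactly the step where the finer structural analysis of the nilradical of an algebraic soliton carried out in \cite{alek} is really needed, and I would draw on it here rather than redo it. Granting (i), $\ug=\kg\oplus\hg$ is a Lie subalgebra, it is reductive (the sum of a semisimple ideal and its center), and $\ggo=\ug\ltimes\ngo$.

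With the semidirect-product structure available, the homogeneous Ricci formula splits into three blocks and matching each against $cI+D_\pg$ yields (ii)--(iv). The $\hg$-block of $\ricci$ is the Ricci operator $\ricci_\ug$ of the reductive homogeneous space $(\ug=\kg\oplus\hg,\ip|_{\hg\times\hg})$ corrected by exactly the contribution of the $\hg$-action on $\ngo$, the nonnegative symmetric operator $C_\theta$ with $\la C_\theta Y,Y\ra=\tr\big(S(\ad Y|_\ngo)^2\big)$; this is (ii). The $\ngo$-block of $\ricci$ is the Ricci operator $\ricci_\ngo$ of the nilpotent metric Lie algebra $(\ngo,\ip|_\ngo)$ plus correction terms built from the $\ad Y|_\ngo$, $Y\in\hg$; imposing the block-diagonal form $cI+D_\pg$ forces the symmetric part of these corrections to assemble into a derivation $D_1$ --- so $(\ngo,\ip|_\ngo)$ is a nilsoliton, (iii) --- and the remaining part to vanish, which after a short computation is precisely $\sum_i[\ad Y_i|_\ngo,(\ad Y_i|_\ngo)^t]=0$ for an orthonormal basis $\{Y_i\}$ of $\hg$; polarizing this identity gives $(\ad Y|_\ngo)^t\in\Der(\ngo)$, so (iv) holds. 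For (v) one reassembles the three blocks of $\ricci$, rewrites the resulting symmetric operator on $\pg$ as $cI+D_\pg$, and verifies that the $D$ so obtained equals $-S(\ad H)+\mathrm{diag}(0,0,D_1)$ and is a derivation killing $\kg$; here one uses that $H$ is $\Ad(K)$-invariant (so $\ad H|_\kg=0$) and that in fact $H\in\hg$, since $\tr\ad_\ggo X=0$ for $X\in\kg$ (the isotropy acting by skew-symmetric operators) and $\tr\ad_\ggo Z=0$ for $Z\in\ngo$ ($\ad Z$ is nilpotent).

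For the converse, assume (i)--(iv) with $G/K$ simply connected. By (i) we again have $\ggo=\ug\ltimes\ngo$, and running the block computation backwards shows that (ii)--(iv) are exactly what is needed for each block of the homogeneous Ricci formula to match the corresponding block of $cI+D_\pg$; hence $\ricci=cI+D_\pg$. It remains to check that $D=-S(\ad H)+\mathrm{diag}(0,0,D_1)$ is a genuine derivation of $\ggo$: $(\ad H)^t\in\Der(\ggo)$ follows from (iv) (which gives $(\ad Y|_\ngo)^t\in\Der(\ngo)$) together with $H\in\hg$ and the semidirect structure, and $D_1$ extended by $0$ on $\ug$ is a derivation because $[D_1,\ad Y|_\ngo]=0$ for all $Y\in\hg$, the nilsoliton derivation $D_1$ commuting with every symmetric derivation of $(\ngo,\ip|_\ngo)$, in particular with $(\ad Y|_\ngo)^t$. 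Simple connectedness then guarantees that $e^{tD}$ integrates to automorphisms of $G$ fixing $K$, so the Lie-algebraic identity genuinely exhibits $(G/K,g)$ as an algebraic soliton with cosmological constant $c$ and derivation $D$. The only real difficulty is (i); everything after it is a long but essentially mechanical computation with the homogeneous Ricci formula.
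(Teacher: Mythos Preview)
This theorem is not proved in the present paper: it is quoted verbatim as \cite[Theorem~4.6]{alek}, with only a brief remark that the version here differs from the original merely in the formulas in (v). There is therefore no proof in this paper to compare your proposal against. Your outline is a reasonable sketch of the strategy of \cite{alek} --- split the homogeneous Ricci formula into blocks according to $\pg=\hg\oplus\ngo$ and match each block against $cI+D_\pg$ --- and you are right that the crux is (i), whose proof in \cite{alek} uses the positivity of the nilsoliton derivation together with $c<0$ in a nontrivial way.

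One point in your sketch is not correct as stated. You write that ``polarizing'' the identity $\sum_i[\ad Y_i|_\ngo,(\ad Y_i|_\ngo)^t]=0$ gives $(\ad Y|_\ngo)^t\in\Der(\ngo)$ for every $Y\in\hg$. Polarization of that sum only yields a symmetric bilinear identity in $Y$, not that each individual transpose is a derivation. The actual mechanism is a moment-map/GIT argument: the vanishing of $\sum_i[\theta(Y_i),\theta(Y_i)^t]$ says that $\theta$ is a minimal vector for the $\Gl(\ngo)$-action on $\End(\hg,\glg(\ngo))$, and one then uses the Richardson--Slodowy/Ness theory (as in the Appendix of this paper, or \cite[Appendix]{cruzchica}) to conclude that $\theta$ lies in the same $\Or(\ngo)$-orbit as a representation for which the transposes are manifestly derivations. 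Likewise, in your converse argument, the claim that $(\ad H)^t\in\Der(\ggo)$ and that the extension of $D_1$ by zero is a derivation require more than you indicate; in \cite{alek} these are handled via the structure theory of nilsolitons (the pre-Einstein derivation commutes with all symmetric derivations) and the moment-map consequences of (iv), not by elementary manipulations.
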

%\begin{corollary}\cite[Corollary 4.11]{alek}\label{colosemial}
%Let $(G/K,g)$ be a algebraic soliton with cosmological constant $c<0$ and with a metric reductive decomposition $(\ggo=\kg \oplus \pg, \ip)$ such that $B(\kg,\pg)=0.$ Consider in $\ggo$ the decomposition (\ref{descdeg}). If $[\hg,\hg]=0,$ then $(G/K,g)$ is isometric to a solvmanifold. This in particular holds when $\ggo$ is solvable.
%\end{corolla

\begin{remark}
Theorem 4.6 of \cite{alek} is actually stated for semi-algebraic solitons. The above statement for algebraic solitons only differs on the formulas for $\ricci$ and $D$ in condition (v).
\end{remark}

We now make some observations regarding the group-level decomposition. Let $U, N$ be the connected Lie subgroups of $G$ with Lie algebras $\ug$ and $\ngo$, respectively. The subgroup $N$ is the \emph{nilradical} of $G$, i.e.~the maximal connected nilpotent normal subgroup of $G$. Now assume that $G/K$ is simply connected. Then, $K$ is connected and $K\subseteq U$. Let us call $g_{U/K}$ the metric on $U/K$ induced by $g$. The data set $(\ug= \kg \oplus \hg,\ip|_{\hg \times \hg})$ is a metric reductive decomposition for $(U/K,g_{U/K})$ (though it may happen that $\hg$ is not $B_\ug$-orthogonal to $\kg$, where $B_\ug$ is the Killing form of $\ug$), and $\ricci_\ug$ is its corresponding Ricci operator. Observe that the homogeneous space $U/K$ may not be almost-effective; however, we may always assume that it is so by suitably shrinking the transitive group $G$, as it will be shown in Corollary \ref{U/Kae}.

\begin{remark}\label{Gsc}
If $G$ is simply connected, then $G \simeq U \ltimes N,$ and if $G/K$ is also simply connected, we have $G/K=U/K \times N$ as differentiable manifolds.
\end{remark}

\begin{remark}\label{theta0}
The fact that $\hg \perp \ngo$ allows us to recover the homogeneous space $(G/K,g)$ from the data $(N,g_N)$, $(U/K,g_{U/K})$ and the action of $U$ on $N$ induced by $\theta$. Moreover, if $\theta = 0$, then $(G/K,g) = (U/K,g|_{U/K}) \times (N,g|_N)$ is clearly a Riemannian product. If in addition $(N,g_N)$ is flat, then the algebraic soliton is \emph{trivial}.
\end{remark}

\begin{proposition}\label{algebraic}
Let $(M,g)$ be a homogeneous Ricci soliton which is algebraic with respect to a transitive group $G$, and let $D$ be the corresponding derivation of $\ggo := \Lie(G)$. Let $G_1$ be a Lie subgroup of $G$ which is also transitive on $M$, $\ggo_1 = \Lie(G_1)$, and assume that $D(\ggo_1) \subseteq \ggo_1$. Then, $(M,g)$ is also algebraic with respect to $G_1$. Moreover, a sufficient condition in order for $D(\ggo_1)\subseteq \ggo_1$ to hold is that $G_1$ contains the nilradical of $G$.
\end{proposition}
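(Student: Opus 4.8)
The plan is to prove the two assertions separately.

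\emph{Sufficiency of $D(\ggo_1)\subseteq\ggo_1$.} Fix a base point $p\in M$ and set $K=\{g\in G:g\cdot p=p\}$, $K_1=\{g\in G_1:g\cdot p=p\}=G_1\cap K$, so that $M=G/K=G_1/K_1$ and $\kg_1:=\Lie(K_1)=\ggo_1\cap\kg$. Since $G_1$ acts transitively, the inclusion $\ggo_1\hookrightarrow\ggo$ induces an isomorphism $\ggo_1/\kg_1\simeq\ggo/\kg$, which I identify with $T_pM$ in the usual way. Put $\widetilde D:=D|_{\ggo_1}$; by hypothesis this is a derivation of $\ggo_1$, and $\widetilde D\kg_1=0$ because $\kg_1\subseteq\kg$ and $D\kg=0$. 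Extend $\ip$ to an inner product on $\ggo$ (by $-B$ on $\kg$, with $\kg\perp\pg$) and let $\pg_1$ be the orthogonal complement of $\kg_1$ in $\ggo_1$; since $\ggo_1$ is $\Ad(K_1)$-invariant, $(\ggo_1=\kg_1\oplus\pg_1,\ip|_{\pg_1\times\pg_1})$ is a metric reductive decomposition for $(M,g)$ with respect to $G_1$. As $\widetilde D$ kills $\kg_1$, the operator $\widetilde D_{\pg_1}\in\End(\pg_1)$ is just the endomorphism of $\ggo_1/\kg_1$ induced by $\widetilde D$, which under the identifications above coincides with the endomorphism of $\ggo/\kg$ induced by $D$, namely $D_\pg$. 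Since the Ricci operator of $(M,g)$ at $p$ does not depend on the presentation, $\ricci=cI+D_\pg=cI+\widetilde D_{\pg_1}$, so $(M,g)$ is an algebraic soliton with respect to $G_1$ with the same cosmological constant and derivation $\widetilde D$ (note $\widetilde D_{\pg_1}=\ricci-cI$ is automatically symmetric).

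\emph{The case $\ngo\subseteq\ggo_1$.} I would take $D$ in the explicit form of Theorem~\ref{solisemal}(v) with respect to the decomposition \eqref{descdeg}: $D=-S(\ad H)+D_0$, where $D_0$ is the nilsoliton derivation $D_1\in\Der(\ngo)$ of part (iii) extended by zero on $\kg\oplus\hg$, and $\la H,X\ra=\tr\ad X$. Since $\ngo$ is an ideal, $\tr\ad X=0$ for $X\in\ngo$, hence $H\perp\ngo$ and so $H\in\hg$. Using $[\kg,\hg]\subseteq\hg$ from \eqref{relengkhn} and $[\hg,\hg]\subseteq\kg\oplus\hg$ from Theorem~\ref{solisemal}(i) (and $\hg\perp\ngo$ by construction), one checks that $\ad H$ preserves the ideal $\ngo$ and the subalgebra $\ug=\kg\oplus\hg=\ngo^{\perp}$; hence so do $(\ad H)^{t}$, $S(\ad H)$, and therefore $D$. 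Now $D_0(\ggo)\subseteq\ngo\subseteq\ggo_1$, and writing $\ggo_1=\ug_1\oplus\ngo$ with $\ug_1:=\ggo_1\cap\ug$ — a subalgebra satisfying $\ug_1+\kg=\ug$ by transitivity — it remains to show that $D$ preserves $\ug_1$, i.e.~that $\overline D:=D|_{\ug}$ (a derivation of the reductive algebra $\ug\simeq\ggo/\ngo$) preserves $\ug_1$. A short computation gives $\overline D=-S_{\ug}(\ad_{\ug}H)$; moreover $X\mapsto\tr(\ad X|_{\ngo})$ is a homomorphism of $\ug$ onto an abelian algebra, so it vanishes on $[\ug,\ug]$, which (by the defining property of $H$) gives $H\perp[\ug,\ug]$.

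The hard part will be to conclude from this last orthogonality — using the $\Ad(K)$-invariance of $\ip$ and the reductive splitting $\ug=[\ug,\ug]\oplus\zg(\ug)$ — that $H\in\zg(\ug)$. Granting it, $\ad_{\ug}H=0$, so $\overline D=0$, which preserves $\ug_1$ trivially; hence $D(\ggo_1)\subseteq\ggo_1$, as claimed. I expect this to be the crux: it is precisely where the compatibility of the metric with the reductive structure of $\ug$ must be used, and where the soliton derivation has to be distinguished from an arbitrary element of $\Der(\ggo)$ — for a general derivation no invariance of subalgebras containing $\ngo$ holds (e.g.~an inner derivation $\ad Y$ need not preserve a Borel subalgebra even when that subalgebra contains the nilradical).
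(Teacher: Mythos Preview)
Your argument for the first assertion is correct and in fact more self-contained than the paper's: the paper simply invokes \cite[Proposition~A.1]{HePtrWyl} (the soliton vector field can be chosen so that $\lca_X$ acts on $\ggo$ as $D$), whereas you identify $\ggo_1/\kg_1\simeq\ggo/\kg\simeq T_pM$ directly and observe that the endomorphism of $T_pM$ induced by $D$ is the same in both presentations, so $\ricci=cI+D_\pg=cI+\widetilde D_{\pg_1}$ with $\widetilde D=D|_{\ggo_1}\in\Der(\ggo_1)$.

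For the second assertion there is a genuine gap, exactly where you flag it. From $H\perp[\ug,\ug]$ (orthogonality in $\hg$ with respect to $\ip$) one cannot conclude $H\in\zg(\ug)$: the splitting $\ug=\zg(\ug)\oplus[\ug,\ug]$ is a direct sum of ideals, but nothing forces it to be $\ip$-orthogonal --- for instance when $\kg=0$ the inner product on $\hg=\ug$ is an arbitrary left-invariant metric, and $\zg(\ug)$ need not be perpendicular to $[\ug,\ug]$. What is actually needed is the weaker statement $\overline D=-S_\ug(\ad_\ug H)=0$, i.e.\ that $D$ vanishes on $\ug$; this is precisely \cite[Proposition~4.14]{alek}, whose proof uses the soliton conditions (i)--(iv) of Theorem~\ref{solisemal} and not merely the formula in (v). The paper's proof of the second assertion just cites that result: once one knows $D(\ggo)\subseteq\ngo$, the inclusion $D(\ggo_1)\subseteq\ngo\subseteq\ggo_1$ is immediate whenever $G_1$ contains the nilradical, with no need to analyse $\ug_1$ at all. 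Your hands-on route can be completed, but only by essentially reproving \cite[Proposition~4.14]{alek}; the shortcut you hope for via $\Ad(K)$-invariance alone is not available.
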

\begin{proof}
The first claim follows from \cite[Proposition A.1]{HePtrWyl} and its proof. Indeed, using that $(M,g)$ is $G$-algebraic, it can be shown that the soliton vector field $X$ can be chosen so that $\lca_X$ acts on $\ggo$ as the algebraic soliton derivation $D$, under the natural identification of vector fields with elements of $\ggo$.

The last claim is a consequence of Theorem \ref{solisemal}, (v) and \cite[Proposition 4.14]{alek}, which show that for an algebraic soliton, the image of the corresponding derivation is contained in the nilradical.
\end{proof}

The following will be very important for proving Theorems \ref{main5} and \ref{main6}.

\begin{corollary}\label{U/Kae}
Let $(G/K,g)$ be a simply connected, expanding algebraic soliton. Then, there exists a connected Lie subgroup $G_1$ of $G$ which is transitive on $G/K$, with isotropy $K_1 = G_1 \cap K$, such that $(G_1/K_1,g)$ is still an algebraic soliton, and such that the corresponding homogeneous space $U_1/K_1$ is almost effective.
\end{corollary}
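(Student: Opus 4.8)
The plan is to use the structural description of Theorem~\ref{solisemal} to locate the ineffective part of the auxiliary homogeneous space $U/K$ inside a reductive ideal of $\ug$, and then to shrink $G$ by discarding exactly that ideal while retaining the nilradical, so that Proposition~\ref{algebraic} continues to guarantee the algebraic soliton condition.

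Concretely, I would start from the metric reductive decomposition $(\ggo=\kg\oplus\pg,\ip)$ with $\pg$ the Killing-orthogonal complement of $\kg$ and the induced decomposition~\eqref{descdeg}, so that $\ggo=\ug\ltimes\ngo$ with $\ug=\kg\oplus\hg$ reductive. Since $(\ug=\kg\oplus\hg,\ip|_{\hg\times\hg})$ is precisely the metric reductive decomposition of $(U/K,g_{U/K})$, the obstruction to $U/K$ being almost effective is the subspace $\zg:=\{Z\in\kg:[Z,\hg]=0\}$. The Jacobi identity together with the bracket relations~\eqref{relengkhn} shows that $\zg$ is an ideal of $\kg$; since in addition $[\zg,\hg]=0$, it is in fact an ideal of $\ug$. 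If $\zg=0$ there is nothing to prove (take $G_1=G$), so assume $\zg\neq0$.

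Because $\ug$ is reductive, the ideal $\zg$ admits a complementary ideal $\ug'$, i.e.\ $\ug=\zg\oplus\ug'$ with $[\zg,\ug']=0$; let $p'\colon\ug\to\ug'$ be the associated (Lie algebra) projection, with kernel $\zg$, and put $\kg':=\kg\cap\ug'$ and $\hg':=p'(\hg)$. Then $\kg=\zg\oplus\kg'$, $\ug'=\kg'\oplus\hg'$, the Lie algebra $\ug'=\kg'\oplus\hg'$ is again reductive, and now the isotropy action of $\kg'$ on $\hg'$ is \emph{faithful}: if $Z\in\kg'$ annihilates $\hg'$ then $p'([Z,Y])=[Z,p'(Y)]=0$ for every $Y\in\hg$, hence $[Z,Y]\in\hg\cap\zg=0$, so $Z\in\zg\cap\ug'=0$. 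I would then set $\ggo_1:=\ug'\ltimes\ngo$, let $G_1\subseteq G$ be the corresponding connected subgroup, and $K_1:=G_1\cap K$. One checks directly that $\ggo_1+\kg=\ggo$ (so $G_1$ is still transitive on $G/K$), that $\kg_1:=\Lie(K_1)=\ggo_1\cap\kg=\kg'$, and that $\ngo\subseteq\ggo_1$; in particular $G_1$ contains the nilradical of $G$, so by Proposition~\ref{algebraic} the space $(G_1/K_1,g)$ is again an algebraic soliton, with the same cosmological constant.

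The remaining, and main, point is to check that the auxiliary space $U_1/K_1$ attached to $\ggo_1$ by the structure theory — which is built from the \emph{Killing-orthogonal} decomposition of $\ggo_1$, and hence a priori from a complement of $\ngo$ different from the ``obvious'' one $\kg'\oplus\hg'$ — is almost effective. Here the key observation is that $\ggo_1$ is an ideal of $\ggo$ on which $\ggo/\ggo_1\cong\zg$ acts trivially, so that $B_{\ggo_1}=B_{\ggo}|_{\ggo_1\times\ggo_1}$; this yields that $\ngo$ is still the nilradical of $\ggo_1$, that $B_{\ggo_1}$ is negative definite on $\kg_1$, and that the Killing-orthogonal complement $\pg_1$ of $\kg_1$ contains $\ngo$. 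Writing $\pg_1=\hg_1\oplus\ngo$ as in~\eqref{descdeg} for $\ggo_1$, both $\hg_1$ and $\hg'$ are complements of $\kg_1\oplus\ngo$ inside $\ggo_1=\kg'\oplus\hg'\oplus\ngo$; decomposing an arbitrary element of $\hg_1$ along this direct sum and using that $[\kg',\kg']\subseteq\kg'$, $[\kg',\hg']\subseteq\hg'$ and $[\kg',\ngo]\subseteq\ngo$ lie in the three summands, one concludes that $[Z,\hg_1]=0$ for $Z\in\kg_1$ forces $[Z,\hg']=0$, whence $Z=0$ by the faithfulness above. Thus $U_1/K_1$ is almost effective, as required. (The Lie-theoretic bookkeeping above, and the standing topological remark that $G_1$ — a priori only an immersed subgroup — may be taken closed, are routine; the delicate step is this last one, precisely because the structure theory pins down $U_1$ through the Killing form of $\ggo_1$ rather than through the naive decomposition, so that one cannot simply set $\hg_1=\hg'$ and must instead compare the two complements.)
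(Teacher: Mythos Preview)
Your construction is exactly the paper's: both take the complementary ideal $\ug'$ to $\zg=\ker(\ad:\kg\to\End(\hg))$ in the reductive $\ug$, set $\ggo_1=\ug'\oplus\ngo$, and use Proposition~\ref{algebraic}. Where the paper simply asserts that $U_1/K_1$ is almost effective ``by construction'', you correctly flag and resolve a subtlety it glosses over: the structure-theoretic $\hg_1$ (built from $B_{\ggo_1}$ and the induced inner product on $\pg_1$) need not equal the naive $\hg'=p'(\hg)$, and you show almost-effectiveness transfers via the $\ad(\kg')$-equivariant isomorphism $\hg_1\to\hg'$ given by projection along $\kg'\oplus\ngo$.

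Two minor corrections to your justifications (neither affects the argument): $\zg$ does \emph{not} act trivially on $\ggo_1$ in general, since $\theta|_\zg$ may be nonzero on $\ngo$---but $B_{\ggo_1}=B_\ggo|_{\ggo_1\times\ggo_1}$ holds for any ideal regardless; and the fact that $\ngo$ is still the nilradical of $\ggo_1$ does not follow from this Killing-form identity, but rather from noting that the nilradical of $\ggo_1$, being preserved by the derivations $\ad(Z)|_{\ggo_1}$ for $Z\in\zg$, is a nilpotent ideal of all of $\ggo$ and hence contained in $\ngo$.
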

\begin{proof}
Let $(\gkhn,\ip)$ be a metric reductive decomposition for $(G/K,g)$, and consider the ideal of $\ug$ given by $\qg = \ker \left\{ \ad : \kg \to \End(\hg)\right\}\subseteq \kg$. Since $\ug$ is reductive, there exists an ideal $\ug_1\subseteq \ug$ such that $\ug = \qg \oplus \ug_1$. Let $\ggo_1 := \ug_1\oplus \ngo$, which is clearly a subalgebra of $\ggo$ (it is in fact an ideal), and let $G_1$ be the connected Lie subgroup of $G$ with Lie algebra $\ggo_1$. Then $G_1$ is still transitive on $G/K$ (because $K G_1 = G$), and we have a presentation $(G_1/K_1,g)$, with $K_1 = G_1\cap K$, which by Proposition \ref{algebraic} is still an algebraic soliton. Finally, if $U_1$ is the connected Lie subgroup of $G_1$ with Lie algebra $\ug_1$, the homogeneous space $U_1/K_1$ is almost-effective by construction.
%By Theorem \ref{solisemal}, any expanding algebraic soliton is isometric to an algebraic soliton constructed from a metric reductive decomposition $(\gkhn,\ip)$ that satisfies (i)-(iv). If the homogeneous space associated to the metric reductive decomposition $(\ug = \kg \oplus \hg, \ip|_\hg)$ is not almost effective, consider the ideal of $\kg$ given by $\qg = \ker \left\{ \ad : \kg \to \End(\hg)\right\}$, and consider an ideal $\tilde{\ug}$ of $\ug$ $\tilde{\kg}:= \kg / \qg$. The homogeneous space associated to $(\tilde{\ug} = \tilde{\kg}\oplus \hg, \ip|_\hg)$ is isometric to the previous one, but this is almost effective. Finally, it is clear that $(\tilde{\ggo} = \tilde{\kg}\oplus \hg \oplus \ngo, \ip)$ satisfies (i)-(iv) of Theorem \ref{solisemal}, and the algebraic soliton obtained from this data is isometric to the one considered at the beginning of the proof.
\end{proof}

An important application of Theorem \ref{solisemal} is the following proposition, which reduces to the unimodular case the classification of homogeneous Ricci solitons up to isometry.

\begin{proposition}\cite[\S 6]{alek}\label{unimod}
Let $(G/K,g)$ be a simply connected expanding algebraic soliton, and assume that $G$ is non-unimodular. Then, there exists a codimension-one normal subgroup $G_1 \vartriangleleft G$ which is unimodular, contains $K$, and the homogeneous space $(G_1/K, g|_{G_1/K})$ is a simply connected, expanding algebraic soliton.
\end{proposition}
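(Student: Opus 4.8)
The plan is to take for $G_1$ the connected subgroup whose Lie algebra is the unimodular kernel $\ggo_1:=\{X\in\ggo:\tr\ad X=0\}$, and then to verify that the data $G_1$ inherits satisfies the converse part of Theorem~\ref{solisemal}. The first step is to record the structure of $\ggo_1$: it is a unimodular ideal containing $[\ggo,\ggo]$, of codimension one since $G$ is non-unimodular; it contains $\kg$ (for $X\in\kg$, $\ad X|_\pg$ is skew and, $\kg$ being compact, so is $\ad X|_\kg$, hence $\tr\ad X=0$) and it contains $\ngo$ (for $X\in\ngo$ one has $\ad X(\ggo)\subseteq\ngo$ by~\eqref{relengkhn} and $\ad X|_\ngo$ nilpotent, so $\ad X$ is nilpotent on $\ggo$). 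Writing $H$ for the element with $\la H,X\ra=\tr\ad X$ for all $X$, one gets $H\in\hg$ (as $\tr\ad Z=0$ for $Z\in\ngo$), $H\ne0$ by non-unimodularity, and $\ggo_1=\kg\oplus\hg_1\oplus\ngo$ with $\hg_1:=\{Y\in\hg:\la H,Y\ra=0\}=\hg\cap\ggo_1$. Taking $G_1$ to be the connected subgroup with Lie algebra $\ggo_1$, simple-connectedness of $G$ makes $G_1$ closed, normal and simply connected, and $K\subseteq G_1$ (since $\kg\subseteq\ggo_1$ and $K$ is connected); hence $G_1/K$ is a simply connected codimension-one submanifold of $G/K$ on which $g|_{G_1/K}$ is the $G_1$-invariant metric determined by $(\ggo_1=\kg\oplus\pg_1,\ip|_{\pg_1})$, with $\pg_1:=\hg_1\oplus\ngo$.

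Next I would check this fits the framework of Theorem~\ref{solisemal}. Since $\ggo_1$ is an ideal, its Killing form is the restriction of that of $\ggo$, so $B_{\ggo_1}(\kg,\pg_1)=0$, $\pg_1$ is the $B_{\ggo_1}$-orthogonal complement of $\kg$ in $\ggo_1$, and $\hg_1\perp\ngo$ is inherited; moreover the nilradical of $\ggo_1$ equals $\ngo$, being a characteristic ideal of $\ggo_1\vartriangleleft\ggo$ and hence a nilpotent ideal of $\ggo$. It then remains to verify conditions (i)--(iv) for $(\ggo_1=\kg\oplus\pg_1,\ip|_{\pg_1})$ with the same $c$. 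Condition (iii) is inherited verbatim, the nilsoliton $(\ngo,\ip|_\ngo)$ and its derivation $D_1$ being unchanged. For (i), $[\hg_1,\hg_1]\subseteq\ggo_1\cap(\kg\oplus\hg)=\kg\oplus\hg_1$, and $\ug_1:=\kg\oplus\hg_1$ is reductive since it contains $[\ug,\ug]\subseteq[\ggo,\ggo]\subseteq\ggo_1$ and is otherwise central. For (iv), completing an orthonormal basis of $\hg_1$ to one of $\hg$ by $H/\|H\|$ and using (iv) for $\ggo$ reduces the identity to $[\ad H|_\ngo,(\ad H|_\ngo)^t]=0$, i.e.\ to the normality of $\ad H|_\ngo$. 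This I would deduce from Theorem~\ref{solisemal}(v): since $\operatorname{Im}D\subseteq\ngo$ by Proposition~\ref{algebraic} and $D$ is symmetric, necessarily $D|_{\kg\oplus\hg}=0$ and $D|_\ngo=D_1-S(\ad H|_\ngo)$, so the derivation identity for $D$ applied to $[H,\cdot]$ on $\ngo$ yields $[D|_\ngo,\ad H|_\ngo]=0$; combined with $[D_1,\ad H|_\ngo]=0$ --- which holds because $\ad H|_\ngo$ and, by (iv), its transpose are derivations of the nilsoliton, so their symmetric and skew parts are too, and symmetric, resp.\ skew-symmetric, derivations of a nilsoliton commute with $\ricci_\ngo=cI+D_1$ --- this forces $[S(\ad H|_\ngo),\ad H|_\ngo]=0$, equivalently $[\ad H|_\ngo,(\ad H|_\ngo)^t]=0$.

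The main obstacle will be condition (ii), namely $\ricci_{\ug_1}=cI+C_{\theta_1}$. Since $C_{\theta_1}$ is given by the same quadratic form as $C_\theta$ but restricted to $\hg_1$, this is equivalent to the assertion that the Ricci operator of the metric reductive decomposition $(\ug_1=\kg\oplus\hg_1,\ip|_{\hg_1})$ equals $P_{\hg_1}\,\ricci_\ug|_{\hg_1}$, where $P_{\hg_1}$ is the orthogonal projection onto $\hg_1$. Here $\ug_1\vartriangleleft\ug$ (it contains $[\ug,\ug]$), so $B_{\ug_1}$ is the restriction of $B_\ug$, and $\ug$ and $\ug_1$ are both unimodular, so the mean-curvature term is absent from the formula for the Ricci curvature of a homogeneous space (\cite[7.38]{Bss}) in both cases; thus the only discrepancy comes from the single extra orthonormal direction $H/\|H\|\in\hg\setminus\hg_1$, and every extra term it contributes to the formula involves $[Y,H]$ with $Y\in\hg_1$. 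Using that $[Y,H]\in\ggo_1$ with $\hg$-component lying in $\hg_1$ (which is orthogonal to $H$), and that $\ad H|_\hg$ is skew-symmetric --- equivalently $S(\ad H)|_{\kg\oplus\hg}=0$, again a consequence of $\operatorname{Im}D\subseteq\ngo$ and the symmetry of $D$ --- I expect these extra terms to vanish, or cancel in pairs, leaving exactly the desired identity; carrying out this bookkeeping with the precise form of the Ricci formula is the real work. Once (i)--(iv) are in place, the converse direction of Theorem~\ref{solisemal} yields that $(G_1/K,g|_{G_1/K})$ is an algebraic soliton with cosmological constant $c<0$, hence expanding, and it is simply connected by the first step; so $G_1$ is the required codimension-one unimodular normal subgroup containing $K$.
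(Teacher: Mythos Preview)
The paper does not contain a proof of this proposition: it is quoted verbatim from \cite[\S 6]{alek} and used as a black box, so there is no ``paper's own proof'' to compare your attempt against.

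That said, your strategy---take $\ggo_1=\ker(X\mapsto\tr\ad X)$, check that $\ggo_1=\kg\oplus\hg_1\oplus\ngo$ with $\hg_1=H^\perp\cap\hg$, and then verify conditions (i)--(iv) of Theorem~\ref{solisemal} for the restricted data---is exactly the approach taken in the cited reference, and your treatment of (i), (iii), (iv) is correct. Two small points: you silently assume $G$ is simply connected (needed for $G_1$ to be simply connected), whereas the hypothesis only gives $G/K$ simply connected; this is harmless since one can pass to the universal cover of $G$ first, but it should be said. Also, your argument that the nilradical of $\ggo_1$ equals $\ngo$ is correct but compressed: the nilradical of $\ggo_1$ is a characteristic ideal of the ideal $\ggo_1\vartriangleleft\ggo$, hence an ideal of $\ggo$, hence contained in $\ngo$; the reverse inclusion is clear.

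For condition (ii) you give only a sketch and flag it as ``the real work''. Your plan is sound: the Killing-form term restricts correctly because $\ug_1\vartriangleleft\ug$, both $\ug$ and $\ug_1$ are unimodular so there is no mean-curvature term, and the discrepancy between $\ricci_\ug$ and $\ricci_{\ug_1}$ on $\hg_1$ reduces to terms of the form $\langle[X,\hat H]_\hg,\cdot\rangle$ with $X\in\hg_1$. The cancellation you anticipate does occur and follows from $S(\ad H)|_\hg=0$ (equivalently, the $\hg$-block of $\ad H$ is skew for $\ip$), which you correctly extract from Theorem~\ref{solisemal}(v) and the symmetry of $D$ with $\operatorname{Im}D\subseteq\ngo$. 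Carrying this out with the explicit formula \cite[7.38]{Bss} is routine; in \cite{alek} it is done via the same identity, so your outline matches the intended argument.
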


%The decomposition \eqref{descdeg} and the conditions from Theorem \ref{solisemal} will be constantly used along this paper.
%Let us call $\theta:\ug \rightarrow \Der(\ngo)$ the Lie algebra representation induced by the adjoint representation of $\ggo$, that is,
%\begin{equation}\label{dtheta}
%\theta(X)=\ad(X)|_{\ngo}, \quad X \in \ug.
%\end{equation}

In order to simplify the presentation, we summarize in the following lemma some border cases which can be easily handled.
\begin{lemma}\label{lemadimn}
Let $(G/K,g)$ be a Riemannian homogeneous space which is an algebraic soliton with cosmological constant $c<0$ and consider the decomposition $\gkhn$ as in \eqref{descdeg}.
\begin{enumerate}[(i)]
  \item\label{lemadimn-n0}\cite{alek} If $\ngo=0$ (i.e. $\ggo$ semisimple), then $(G/K,g)$ is Einstein with $\ricci = cI$.
  \item\label{lemadimn-n1uss} If $\dim \ngo=1$ and $\ggo$ is unimodular, then the representation $\theta:\ug \rightarrow \Der(\ngo)$ is trivial. This applies in particular when $\ug$ is semisimple and $\dim \ngo = 1$.
   \item\label{lemadimn-n1zh} If $\dim \ngo=1,$ then $\zg(\ug) \subset \hg,$ where $\zg(\ug)$ is the center of the Lie algebra $\ug.$
   \item\label{lemadimn-zn2}  $\dim \zg(\ug)\leq (\dim \ngo)^2.$
   \item\label{lemadimn-u2} If $\dim \ug \leq 2$ then $G$ is solvable and $(G/K,g)$ is isometric to a solvsoliton. If in addition we have that $\kg=0$, then $(G,g)$ is a solvsoliton.
\end{enumerate}
\end{lemma}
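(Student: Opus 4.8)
The plan is to verify the five items one by one, using Theorem \ref{solisemal} as the main engine together with elementary facts about semisimple and reductive Lie algebras. For \eqref{lemadimn-n0}, if $\ngo=0$ then $\ggo=\ug$ is reductive by Theorem \ref{solisemal}(i); but the nilradical of a reductive Lie algebra is its center, and since $\ngo=0$ this center is trivial, so $\ggo$ is semisimple. Condition (v) then gives $D=-S(\ad H)$ with $\la H,X\ra=\tr\ad X=0$ for all $X$ (semisimple algebras are unimodular, in fact $\tr\ad X=0$), so $H=0$, hence $D=0$ and $\ricci=cI$; this is the statement already recorded in \cite{alek}. For \eqref{lemadimn-n1uss}: when $\dim\ngo=1$ we have $\Der(\ngo)\cong\RR$ acting on the line $\ngo$ by scalars, so $\theta:\ug\to\RR$ is a Lie algebra homomorphism into an abelian algebra and therefore factors through $\ug/[\ug,\ug]$. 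Condition (iv) of Theorem \ref{solisemal} says $\sum[\ad Y_i|_\ngo,(\ad Y_i|_\ngo)^t]=0$, which is automatic in dimension one, but the key point is unimodularity: $0=\tr\ad_\ggo X=\tr\ad_\ug X+\tr\theta(X)=\tr\ad_\ug X+\theta(X)$ for $X\in\ug$ (identifying $\theta(X)\in\RR$ with its trace on the line), and a similar computation using that $N$ is the nilradical forces $\theta\equiv 0$; alternatively, unimodularity of $\ggo$ together with $[\ggo,\ngo]\subset\ngo$ gives $\tr(\ad X|_\ngo)=0$ for all $X\in\ggo$ directly, which is exactly $\theta=0$ since $\ngo$ is a line. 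The semisimple case follows since then $\ug=[\ug,\ug]$ kills any homomorphism to an abelian algebra.

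For \eqref{lemadimn-n1zh}, take $Z\in\zg(\ug)$; then $\theta(Z)=\ad Z|_\ngo$ commutes with all of $\theta(\ug)$, but more to the point $Z$ acts on $\ngo$ by a scalar $\lambda$. If $\lambda\ne 0$ then $\ad Z$ has a nonzero trace on $\ggo$, so $Z$ contributes to $H$; but we must show $Z\in\hg$, i.e. $Z$ has no $\kg$-component. Since $\kg$ acts on $\hg$ (by \eqref{relengkhn}) and $Z$ is central in $\ug$, the $\kg$-part of $Z$ would be central in $\ug$ too and in particular would act trivially on $\hg$; combined with the fact that $\kg$-elements act skew-symmetrically and nilpotently-free on $\ngo$... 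Actually the cleanest route: write $Z=Z_\kg+Z_\hg$; centrality gives $[Z_\kg,\hg]\subseteq\hg$ equals $-[Z_\hg,\hg]$ but also lies in... Instead I would argue via \eqref{lemadimn-n1uss}: elements of $\kg$ act on $\ngo$ with zero trace (skew-symmetrically w.r.t.\ $\ip$) hence, since $\dim\ngo=1$, trivially; so $\theta(Z)=\theta(Z_\hg)$, and then one checks directly that $Z_\kg$ being central in $\ug$ and acting trivially on $\hg$ and on $\ngo$ forces $Z_\kg=0$ by almost-effectiveness (if the hypothesis of almost-effectiveness is in force) — or, if not assumed here, one notes $Z_\kg$ acts trivially on all of $\pg=\hg\oplus\ngo$ hence on $\ggo$ after accounting for $[\kg,\kg]\subset\kg$, contradicting... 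This is the delicate point and I would handle it by reducing to the almost-effective case as in Corollary \ref{U/Kae}.

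For \eqref{lemadimn-zn2}: the representation $\theta|_{\zg(\ug)}:\zg(\ug)\to\Der(\ngo)\subseteq\glg(\ngo)$ has the property that $\theta(Z)$ for $Z\in\zg(\ug)$ commutes with $\theta(\ug)$; I claim $\theta$ is injective on $\zg(\ug)$, because $\ker\theta\cap\zg(\ug)$ would be a central ideal of $\ggo$ acting trivially on $\ngo\supseteq$ (eventually) everything, hence in the nilradical, hence in $\zg(\ug)\cap\ngo=0$. So $\dim\zg(\ug)\le\dim\theta(\zg(\ug))\le\dim\glg(\ngo)=(\dim\ngo)^2$. Finally \eqref{lemadimn-u2}: if $\dim\ug\le 2$ then $\ug$ is reductive of dimension at most $2$, hence abelian (no semisimple Lie algebra has dimension $\le 2$), so $\ggo=\ug\ltimes\ngo$ is solvable. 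By Theorem \ref{solisemal}, $(G/K,g)$ is an algebraic soliton on a solvable group; passing to the simply connected solvable $S$ with $\Lie(S)=\ggo$ acting simply transitively on the universal cover — using Remark \ref{Gsc} and the fact that $G/K$ is diffeomorphic to $\RR^n$ — one obtains a left-invariant algebraic-soliton metric on $S$, i.e.\ a solvsoliton, isometric to the original; and if $\kg=0$ then $G=S$ already and $(G,g)$ is literally a solvsoliton. The main obstacle throughout is item \eqref{lemadimn-n1zh}: pinning down that a central element acting nontrivially on the one-dimensional nilradical must lie entirely in $\hg$, which requires carefully separating the $\kg$- and $\hg$-components using \eqref{relengkhn} and (implicitly) an almost-effectiveness reduction.
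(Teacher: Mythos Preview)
Items \eqref{lemadimn-n0}, \eqref{lemadimn-zn2}, and \eqref{lemadimn-u2} are essentially fine and close to the paper's arguments (for \eqref{lemadimn-n0} the paper actually uses Theorem~\ref{solisemal}(ii) rather than (v): when $\ngo=0$ one has $C_\theta=0$ trivially, so $\ricci_\ug=cI$ and $U=G$; your route via $H=0$ also works). In \eqref{lemadimn-n1uss} your first argument is correct once you note that $\ug$ is reductive and hence unimodular, so $\tr\ad_\ug X=0$; your ``alternative'' claim that unimodularity of $\ggo$ alone gives $\tr(\ad X|_\ngo)=0$ is false in general --- it is precisely the unimodularity of $\ug$ that is needed to pass from $\tr\ad_\ggo X=0$ to $\tr\theta(X)=0$.

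The real gap is \eqref{lemadimn-n1zh}, and you correctly sense it. Your attempt to split a central $W=W_\kg+W_\hg$ and kill $W_\kg$ by almost-effectiveness does not close: from $[W,Y]=0$ for $Y\in\hg$ you get $[W_\kg,Y]=-[W_\hg,Y]$, and while the left side lies in $\hg$ and the right in $\kg\oplus\hg$, this only tells you the $\kg$-component of $[W_\hg,Y]$ vanishes --- it does \emph{not} force $[W_\kg,\hg]=0$, so you cannot invoke almost-effectiveness. Reducing via Corollary~\ref{U/Kae} does not circumvent this. The paper's argument avoids this decomposition entirely and uses the Killing form $B$ of $\ggo$: since $\ip$ is $\Ad(K)$-invariant, $\kg$ acts skew-symmetrically on $\ngo$, hence $\theta(Z)=0$ for all $Z\in\kg$ when $\dim\ngo=1$. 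Now for $W\in\zg(\ug)$ and $Z\in\kg$, the map $\ad W$ vanishes on $\ug$ and equals $\theta(W)$ on $\ngo$, so $B(Z,W)=\tr(\ad Z\,\ad W)=\tr(\theta(Z)\theta(W))=0$. Since $\pg$ is by construction the $B$-orthogonal complement of $\kg$, this places $W\in\pg\cap\ug=\hg$. This is the missing idea.
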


\begin{proof} By Theorem \ref{solisemal}, (ii), we have that $C_{\theta}=0$ and thus $\ricci_{\ug}=cI$. Then $(G/K,g)$ is Einstein with $\ricci=cI$, since $U = G$ in this case, and (i) follows.

If $\ggo$ is unimodular then $\tr \theta(X) = 0$, $\forall X \in \ug$, since $\ug$ is also unimodular. This implies that $\theta = 0$ if $\dim \ngo = 1$. When $\ug$ is semisimple, $\theta(\ug) = [\theta(\ug),\theta(\ug)]$ and thus $\theta = 0$ if $\dim \ngo = 1$.

Let $Z \in \kg$ and $W \in \zg(\ug)$. The inner product $\ip$ on $\pg$ is $\Ad(K)$-invariant, hence $\kg$ acts on $\ngo$ by skew-symmetric endomorphisms. Therefore, $\theta(Z)=0$ if $\dim \ngo=1$. This implies that $Z$ and $W$ are $B$-orthogonal, so (iii) follows from the fact that $\pg$ is the $B$-orthogonal complement of $\kg$.

If $\dim \zg(\ug)> (\dim \ngo)^2 = \dim \End(\ngo)$, then there exists a nonzero $Z\in \zg(\ug)\cap \ker (\theta) \subseteq \zg(\ggo) \subseteq \ngo$. This is a contradiction, therefore (iv) is proved.

The assumption $\dim \ug \leq 2$ implies that $\ug$ is abelian and hence $\ggo$ is solvable. Thus $(G/K,g)$ is a Ricci soliton solvmanifold, which by \cite[Theorem 1.1]{Jbl} is isometric to a solvsoliton. If $\dim \kg = 0$ then $g$ is a left-invariant Ricci soliton metric on the solvable Lie group $G$, that is also an algebraic soliton, so by definition it is a solvsoliton.
\end{proof}

\section{Proof of Theorem \ref{main34}: $\dim G/K = 3$}\label{dim3}

In this section, we classify $3$-dimensional expanding algebraic solitons up to equivariant isometry. By Proposition \ref{gap}, the possible values for $\dim \ggo$ are $3,4$ and $6$. Moreover, $\dim \ggo=6$ if and only if $(G/K,g)$ is ${\RR H}^3=\SO_0(3,1)/\SO(3)$, which is the only simply connected $3$-dimensional space form of negative curvature, and is of course Einstein. On the other hand, by Lemma~ \ref{lemadimn},~ \eqref{lemadimn-n0}, we know that if $\dim \ngo=0$ then $(G/K,g)$ is an Einstein manifold, thus this cases will be omitted.

We begin with a general result which allows us to deal with the case of solvmanifolds.

\begin{lemma}\label{gsol}
Let $(G/K,g)$ be a simply connected, effective Riemannian homogeneous space which is an expanding algebraic soliton, consider the decomposition $\gkhn$ as in $\eqref{descdeg}$, and assume that $[\hg,\hg] = 0$ (this in particular holds when $G$ is solvable). Then, $(G/K,g)$ is equivariantly isometric to $\left( (K\ltimes S)/K, g_S\right)$, where $S$ is a solvable normal Lie subgroup of $G$ which acts simply transitively on $G/K$. The metric $g_S$ is determined by a solvsoliton inner product $\ip_S$ on $\sg = \Lie(S)$, and $(G/K,g)$ is isometric to the solvsoliton $(S,\ip_S$).
\end{lemma}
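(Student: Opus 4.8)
The plan is to read off the simply transitive solvable group $S$ directly from the algebraic data, and then to recognize the induced left-invariant metric as a solvsoliton by means of Theorem \ref{solisemal}\,(v). First I would set $\sg:=\hg\oplus\ngo\subseteq\ggo$ and check that it is a solvable ideal of $\ggo$ complementary to $\kg$: by the bracket relations \eqref{relengkhn} together with $[\hg,\hg]=0$ one has $[\sg,\sg]\subseteq[\hg,\ngo]+[\ngo,\ngo]\subseteq\ngo$, so $\sg$ is a subalgebra in which $\ngo$ is a nilpotent ideal with abelian quotient $\sg/\ngo\simeq\hg$, hence solvable; and $[\kg,\sg]\subseteq[\kg,\hg]+[\kg,\ngo]\subseteq\hg+\ngo=\sg$, so $\sg$ is an ideal of $\ggo$. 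Since in \eqref{descdeg} the subspace $\pg=\hg\oplus\ngo$ is the $B$-orthogonal complement of $\kg$, we get $\ggo=\kg\oplus\sg$ as vector spaces, with $\sg=\pg$. (The parenthetical assertion, that $[\hg,\hg]=0$ when $G$ is solvable, follows from Theorem \ref{solisemal}\,(i): in that case $\ug=\kg\oplus\hg$ is reductive and solvable, hence abelian.)

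Next I would pass to the group level. Let $S$ be the connected Lie subgroup of $G$ with Lie algebra $\sg$; it is normal (it corresponds to an ideal), it is solvable, and $KS=G$ because $KS$ is a subgroup of the connected group $G$ containing a neighborhood of $e$ (as $\kg+\sg=\ggo$). The orbit map $S\to G/K$, $s\mapsto sK$, is $S$-equivariant with surjective differential at $e$ — namely the identity $\sg=\pg=T_{eK}G/K$ — so the $S$-orbit of $eK$ is open; by normality every $S$-orbit is a left translate of it, hence open, and connectedness of $G/K$ forces $S$ to act transitively. Its isotropy $S\cap K$ has Lie algebra $\sg\cap\kg=0$ and hence is discrete, so (using that $K$ is closed) $S\to G/K$ is a covering map; as $G/K$ is simply connected and $S$ is connected, this forces $S\cap K=\{e\}$. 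Therefore $S$ acts simply transitively, $S$ is diffeomorphic to $G/K$ (in particular simply connected), and $G=K\ltimes S$ internally. Transporting $\ip$ from $\pg=\sg=T_eS$ to all of $S$ by left translations defines a left-invariant metric $\ip_S$; since the orbit map intertwines left multiplications on $S$ with the $S$-action on $G/K$, and the latter is by isometries, it is an equivariant isometry $(S,\ip_S)\to(G/K,g)$. Equivalently, $(G/K,g)$ is equivariantly isometric to $\big((K\ltimes S)/K,g_S\big)$, with $g_S$ the invariant metric determined by $\ip_S$.

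It remains to see that $(S,\ip_S)$ is a solvsoliton. By Theorem \ref{solisemal}\,(v), the Ricci operator of $(G/K,g)$ at $eK$ is $\ricci=cI+D_\pg$ with $D\in\Der(\ggo)$ satisfying $D\kg=0$ and $D(\ggo)\subseteq\ngo$. As $\ngo\subseteq\sg$, the restriction $D|_\sg$ is a well-defined derivation of $\sg$, and under the identification $\sg=\pg$ one has $D_\pg=D|_\sg$; and since $(S,\ip_S)$ is isometric to $(G/K,g)$ by a map whose differential at $e$ is the identity $\sg=\pg$, the Ricci operator of $(S,\ip_S)$ at $e$ is the same operator $cI+D|_\sg$. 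Thus $(S,\ip_S)$ is a simply connected solvable Lie group whose Ricci operator has the form \eqref{defsolvsoliton}, i.e.\ a solvsoliton. I expect the only genuinely delicate points to be the topological steps of the second paragraph — that $S$ is transitive with trivial (not merely discrete) isotropy, hence simply connected, and that $G=K\ltimes S$ — together with checking that the transported metric is really left-invariant; once $S$ has been produced, the identification of the Ricci operator through Theorem \ref{solisemal}\,(v) is immediate, so the argument carries essentially no analytic content and is purely structural.
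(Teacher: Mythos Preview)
Your construction of $S$ and the verification that it acts simply transitively, that $G\simeq K\ltimes S$, and that the orbit map is an equivariant isometry are correct and essentially identical to the paper's proof. The difference lies in the final step, showing that $(S,\ip_S)$ is a solvsoliton. The paper does not transport the derivation: it observes that $\hg$ abelian forces $\ricci_\ug=0$, hence $C_\theta=-cI$ by Theorem~\ref{solisemal}\,(ii), and then checks that $(\sg=\hg\oplus\ngo,\ip_S)$ satisfies conditions (i)--(iv) of the structural characterization \cite[Theorem~4.8]{solvsolitons} of solvsolitons. Your route is more direct: since the algebraic-soliton derivation $D$ has image in $\ngo\subseteq\sg$ (this is really \cite[Proposition~4.14]{alek}, as used in Proposition~\ref{algebraic}, rather than Theorem~\ref{solisemal}\,(v) alone), it restricts to a derivation of $\sg$, and the Ricci operator transfers under the isometry to give $\ricci_S=cI+D|_\sg$ immediately. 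Your argument is shorter and self-contained once $D(\ggo)\subseteq\ngo$ is granted; the paper's argument, by contrast, exhibits the resulting solvsoliton explicitly as one of the standard type classified in \cite{solvsolitons}.
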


\begin{proof}
We apply Theorem \ref{solisemal} to this situation. First observe that if $G$ is solvable then $\ug$ is abelian, since it is reductive and also a subalgebra of a solvable Lie algebra (thus solvable itself). Note that $[\hg,\hg] = 0$ implies that $\pg =\hg \oplus \ngo $ is a solvable ideal of $\ggo$, and let $S$ be the connected Lie subgroup of $G$ with Lie algebra $\pg$. Since $S$ is normal, $KS$ is a subgroup of $G$. Moreover, it is a well known fact that the application $\ggo = \kg \oplus \pg \to G$ given by $(Z,X) \mapsto \exp(Z) \exp(X)$ is a local diffeomorphism. This implies that $KS$ contains a neighborhood of the identity, and therefore $G = KS$. Since $K$ is the isotropy at some point, $S$ must act transitively on $G/K$. The corresponding isotropy is the discrete subgroup $K\cap S$, which is connected because $G/K$ is simply connected (see Section \ref{pre}), therefore it is trivial. We conclude that $S$ is also simply connected, and that the action is simply transitive. Since $K\cap S = e$, it follows that $G \simeq K \ltimes S$.

Let us call $g_S$ the left-invariant metric defined on $S$ by the inner product $g(e)$ on $T_e S \simeq \pg$. It is clear that the metrics $g_S$ and $g$ are equivariantly isometric. Regarding the Ricci operator, from Theorem \ref{solisemal}, (ii) we see that $\ricci_\ug = 0$ so $C_\theta = -cI$. Thus Theorem \ref{solisemal} implies that $(\pg= \hg \oplus \ngo, g_S(e))$, satisfies conditions (i)-(iv) from \cite[Theorem 4.8]{solvsolitons}, from which it follows that $(S,g_S)$ is a solvsoliton, and the proof is concluded.
\end{proof}

\begin{remark}
The fact that $(G/K,g)$ is isometric to a solvsoliton was previously obtained in \cite[Theorem 1.1]{Jbl} for $G$ solvable, and in \cite[Corollary 4.11]{alek} under the hypothesis of the previous lemma.
\end{remark}

Now we proceed with the proof, considering different cases according to $\dim \ggo$ and $\dim \ngo$.

\subsection{$\dim \ggo=3$}
Since we are assuming $\dim \ngo > 0$, it follows from Lemma \ref{lemadimn}, (v) that in this case, $(G,g)$ is always a solvsoliton.

\subsection{$\dim \ggo=4$} We divide into cases according to the dimension of $\ngo$:

\subsubsection{$\dim \ggo=4, \dim \ngo = 1$}$\,$

We have that $\dim \hg=2$, $\dim \ug = 3$. It follows from Theorem \ref{solisemal}, (i) that $\ug$ is a reductive Lie algebra, hence it must be isomorphic to one of the following:
\[
\RR^3, \qquad \slg_2(\RR), \qquad \sug(2).
\]
By Lemma \ref{lemadimn}, \eqref{lemadimn-zn2}, $\ug \neq \RR^3$. Also, $\ug \neq \sug(2)$ by Corollary \ref{Unocomp}. If $\ug \simeq \slg_2(\RR)$ then $\ug$ is semisimple, so by Lemma \ref{lemadimn}, \eqref{lemadimn-n1uss} and Remark \ref{theta0} we obtain that the metric is a product $G/K=U/K \times \RR$. In addition, $\ricci_{U/K}=\ricci_{\ug}=cI$ by (ii) of Theorem \ref{solisemal}, hence $U/K$ is isometric to ${\RR H}^2$, the $2$-dimensional hyperbolic space.

\subsubsection{$\dim \ggo=4, \dim \ngo = 2, 3$}$\,$

In this case, $\ug$ is abelian and $\ggo$ is solvable by Lemma \ref{lemadimn}, (v). Therefore, by Lemma \ref{gsol}, $G/K=(\SO(2)\ltimes S)/\SO(2)$, where $S$ is the connected Lie subgroup of $G$ with Lie algebra $\pg$, and the metric is determined by a solvsoliton inner product on $T_{eK}(G/K) \simeq \pg.$

%\vskip16pt
%
%
%The results obtained in this section can be summarized in the following theorem.
%
%\begin{theorem}\label{toeGK3}
%Any simply connected expanding algebraic soliton of dimension $3$ is Einstein, or is equivariantly isometric to one of the algebraic solitons in Table \ref{ta3}.
%\end{theorem}

%
%\begin{corollary}\label{corGK3}
%Any simply connected expanding algebraic soliton of dimension $3$ is isometric to a solvsoliton.
%\end{corollary}
%\begin{proof}
%Any simply connected $3$-dimensional Einstein manifold with negative scalar curvature is isometric to ${\RR H}^3$ (see \cite[Proposition 1.120]{Bss}), which is clearly a solvsoliton. For the homogeneous spaces in Table \ref{ta3}, the case $(\SO(2) \ltimes S)/\SO(2)$ follows from Lemma \ref{gsol}, and $(\Sl_2(\RR) \times \RR)/\SO(2) \simeq \RR H^2 \times \RR$ is a product of a solvsoliton and a flat factor, which is again a solvsoliton.
%\end{proof}

\section{Proof of Theorem \ref{main34}: $\dim G/K =4$}\label{dim4}

We now focus on the classification of $4$-dimensional, simply connected, expanding algebraic solitons up to equivariant isometry. Our classification is according to \cite[Theorem 4.1]{BB4}, where simply connected, effective Riemannian homogeneous spaces of dimension $4$ are classified up to equivariant isometry. The possible values for $\dim \ggo$ are $4,5,6,7,8$ and $10$. We note that when $\dim \ggo\geq 6$ all homogeneous metrics are either Einstein, or a product of metrics of constant sectional curvature. If one of the factors is flat, one gets a \emph{trivial} soliton (i.e.~ the product of an Einstein manifold with a flat space). We disregard the cases where none of the factors are flat because of the following lemma, which is easy to prove.

\begin{lemma}\label{prodcurvcte}
Let $(M_1,g_1)$ and $(M_2,g_2)$ be two non-flat, Einstein homogeneous Riemannian manifolds with Einstein constants $c_1$ and $c_2$, respectively. Then, $(M_1 \times M_2, g_1 \times g_2)$ is a Ricci soliton if and only if $c_1=c_2$, and this is equivalent to $(M_1 \times M_2, g_1 \times g_2)$ being Einstein.
\end{lemma}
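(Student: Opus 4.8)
The plan is to prove both directions of the biconditional and then the last clause. First, suppose $(M_1\times M_2, g_1\times g_2)$ is a Ricci soliton with cosmological constant $c$. The Ricci tensor of a Riemannian product splits as $\Rc(g_1\times g_2) = \Rc(g_1)\oplus\Rc(g_2) = c_1 g_1 \oplus c_2 g_2$, since each factor is Einstein. So the soliton equation \eqref{defsoliton} reads $c_1 g_1 \oplus c_2 g_2 = c(g_1\oplus g_2) + \lca_X(g_1\times g_2)$ for some complete vector field $X$ on $M_1\times M_2$. The point I would exploit is that a homogeneous Ricci soliton with $c_i\ne c$ on at least one factor would force that factor to be a non-Einstein homogeneous Ricci soliton, hence non-trivial, hence non-compact and genuinely expanding and non-gradient by the results of Ivey–Naber–Petersen-Wylie cited in the preliminaries; but I want a cleaner argument.

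The cleaner route: a homogeneous Ricci soliton is always \emph{algebraic} with respect to some presentation (as recalled in Section \ref{pre}), so $\Rc(g_1\times g_2) = c\,I + S(D_\pg)$ for a derivation $D$ of the Lie algebra of a transitive group, and the trace of $S(D_\pg)$ equals $\scalar - c\dim M$ where $\scalar$ is the scalar curvature. More usefully, for the product we may take as transitive group $G_1\times G_2$ where $G_i$ acts transitively on $M_i$; then $\ggo = \ggo_1\oplus\ggo_2$ and a derivation $D$ of $\ggo$ respecting this splitting (which it must, up to an inner correction, since the only ideals are the obvious ones when the $\ggo_i$ have no common simple factors — and in any case the relevant $D$ is in the nilradical by Proposition \ref{algebraic}) restricts to derivations $D^{(i)}$ of $\ggo_i$. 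Comparing the $\pg_i$-blocks gives $c_i I = c I + S(D^{(i)}_{\pg_i})$, i.e.~each factor is an algebraic soliton with cosmological constant $c$. Now $S(D^{(i)}_{\pg_i}) = (c_i - c) I$ is a multiple of the identity; taking traces, $(c_i-c)\dim M_i = \tr S(D^{(i)}_{\pg_i})$. But a derivation of a Lie algebra that is trace-free modulo a multiple of the identity on $\pg_i$ combined with the fact that the image of the soliton derivation lies in the nilradical of $\ggo_i$ forces $S(D^{(i)}_{\pg_i})$ to have zero trace (a scalar endomorphism $(c_i-c)I$ that also has zero trace must vanish): since $D^{(i)}$ maps into the nilradical $\ngo_i$ and $\ad$ of nilradical elements is nilpotent, $\tr(\ad X|_{\ngo_i})$-type terms vanish and one gets $\tr D^{(i)}_{\pg_i}$, hence $\tr S(D^{(i)}_{\pg_i})$, controlled so that $(c_i-c)\dim M_i = 0$. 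Hence $c_i = c$ for $i=1,2$, so $c_1 = c_2$ and moreover $S(D^{(i)}_{\pg_i}) = 0$, so $\Rc(g_i) = c\, g_i$, i.e.~the product is Einstein.

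The converse directions are immediate: if $c_1 = c_2 =: c$ then $\Rc(g_1\times g_2) = c_1 g_1 \oplus c_2 g_2 = c(g_1\times g_2)$, so the product is Einstein, hence trivially a Ricci soliton with $\lca_X g = 0$. And Einstein always implies Ricci soliton. So the three conditions ``$(M_1\times M_2, g_1\times g_2)$ is a Ricci soliton'', ``$c_1=c_2$'', and ``$(M_1\times M_2,g_1\times g_2)$ is Einstein'' are equivalent, which is the claim.

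The main obstacle is the middle step: showing that the only way a non-flat Einstein factor can sit inside a homogeneous Ricci soliton product is if its Einstein constant already equals the cosmological constant. If I did not want to invoke the algebraic-soliton machinery, the alternative is to use that a non-trivial homogeneous Ricci soliton is non-compact and expanding (Ivey–Naber–Petersen-Wylie): if $c_1\ne c_2$ one of the factors would be, after a shift, a non-Einstein homogeneous Ricci soliton of its own, and then a dimension/structure bookkeeping via the first integral $\tr S(D_\pg) = \scalar(g) - c\dim M$ applied to each factor separately pins down $c_1 = c_2 = c$. Either way, the crux is the rigidity coming from the derivation $D$ having zero relevant trace because its image lies in the nilradical — this is exactly Proposition \ref{algebraic} together with Theorem \ref{solisemal}(v), and I would make sure to cite those precisely rather than reprove them.
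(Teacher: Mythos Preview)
The paper does not actually give a proof of this lemma; it only remarks that it ``is easy to prove.'' So there is no argument to compare against, but your proposed approach is far more intricate than what the authors presumably had in mind, and it contains real gaps.

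The two problematic steps are: (a) you assert that the soliton is algebraic with respect to the product group $G_1\times G_2$ and that the derivation $D$ splits as $D^{(1)}\oplus D^{(2)}$, but the cited results only yield algebraicity with respect to \emph{some} presentation (e.g.\ the full isometry group), and a derivation of $\ggo_1\oplus\ggo_2$ need not preserve the summands when the $\ggo_i$ have nontrivial centers; (b) your key trace claim $\tr S(D^{(i)}_{\pg_i})=0$ is not justified---the image of the soliton derivation lying in the nilradical does not by itself force the trace of its restriction to $\pg_i$ to vanish.

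A much shorter argument, likely the intended ``easy'' one, goes as follows. If the product is a Ricci soliton with constant $c$, then $\lca_X g=(c_1-c)\,g_1\oplus(c_2-c)\,g_2$. For each fixed $q\in M_2$, the $M_1$-component $X^{(1)}(\cdot,q)$ is a vector field on $M_1$ satisfying $\lca_{X^{(1)}}g_1=(c_1-c)\,g_1$, i.e.\ it is homothetic. Since the Ricci tensor is natural and scale-invariant as a $(0,2)$-tensor, $\lca_{X^{(1)}}\Rc(g_1)=0$; but $\Rc(g_1)=c_1 g_1$, so $0=c_1(c_1-c)\,g_1$, whence $c_1=c$ because $c_1\neq 0$. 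Similarly $c_2=c$, and the product is Einstein. The converse is immediate. This avoids all the algebraic-soliton machinery.
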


Thus, we are left dealing with the cases $\dim \ggo=4, 5$. Recall that if $\dim \ngo=0$, Lemma~ \ref{lemadimn},~ \eqref{lemadimn-n0} implies that $(G/K,g)$ is Einstein.

\subsection{$\dim \ggo=4$}\label{GK4dimG4}

\subsubsection{$\dim \ggo=4, \dim \ngo = 1$}$\,$

We have that $\dim \hg=3,$ $\ug = \hg$ and since it is reductive, it must be isomorphic to one of the following Lie algebras:
\[
\RR^3, \qquad \slg_2(\RR), \qquad \sug(2).
\]
However, from Lemma \ref{lemadimn}, \eqref{lemadimn-zn2} and Corollaries \ref{Unocomp} and \ref{unosl2} we have that $\hg$ can not be isomorphic to any of the previously mentioned Lie algebras.
%From Lemma \ref{lemadimn}, \eqref{lemadimn-zn2} we have that $\ug \neq \RR^3$. Then $\ug$ must be semisimple, and using Lemma \ref{lemadimn}, \eqref{lemadimn-n1uss} we see that $\theta = 0$. Therefore, by Theorem \ref{solisemal}, (ii) $(U,g)$ is Einstein with $c<0$, and this is a contradiction for both cases: if $\ug = \slg_2(\RR)$, then it is a well-known fact that $U$ does not admit an Einstein left-invariant metric; and if $\ug = \sug(2)$, then $U$ is compact and the contradiction follows from a classic result due to Bochner.
%% and Propositions ......... and .............. ($\hg$ no puede ser $\slg_2(\RR)$ porque la descomposicion de cartan no puede ser ortogonal y $\ug$ no puede ser compacto) we have that $\hg$ can not be isomorphic to $\RR^3,$ $\slg_2(\RR)$ or $\sug(2)$.

\subsubsection{$\dim \ggo = 4, \dim \ngo \geq 2$}$\,$

It follows from Lemma \ref{lemadimn}, \eqref{lemadimn-u2} that in this case $(G,g)$ is a solvsoliton.

\subsection{$\dim \ggo=5$}\label{GK4dimG5}

\subsubsection{$\dim \ggo = 5, \dim \ngo = 1$}$\,$

In this case we have  $\dim \ug=4, \dim \hg=3$. Also, $\ug$ is isomorphic to one of the following Lie algebras:
\[
\RR^4, \qquad \RR \oplus \slg_2(\RR), \qquad \RR \oplus \sug(2).
\]
From Lemma \ref{lemadimn}, (iv) we know that $\ug$ can not be isomorphic to $\RR^4.$ If $\ug \simeq \RR \oplus \slg_2(\RR)$ or $\RR \oplus \sug(2)$ then $\dim \zg(\ug) = 1$, and by Lemma \ref{lemadimn}, \eqref{lemadimn-n1zh} we have that $\zg(\ug) \subset \hg$. Let $\{Z,Y_1,Y_2,Y_3\}$ be an orthonormal basis of $\ug$ such that $Z \in \kg,$ $\{Y_1,Y_2,Y_3\} \in \hg$ and $Y_3 \in \zg(\ug)$. The Lie bracket according to this basis is as follows:
%As $\kg$ leaves $\ug$ invariant, the decomposition is reductive and $e_4 \in \zg(\ug),$ we have that the Lie bracket in this basis is as follows:
\[
[Z,Y_1]= -a Y_2, \quad [Z,Y_2]= a Y_1, \quad [Y_1,Y_2]= b Z + d Y_3, \quad a,b,d \in \RR, \quad a,b \neq 0.
\]
Then, the Ricci operator of $U/K$ on $\{Y_1,Y_2,Y_3\}$ has the following form:
\begin{equation}\label{riccidim4g5n1}
\ricci_{\ug}=\left[
             \begin{array}{ccc}
               -\tfrac{1}{2}d^2 -ba &  &  \\
                & -\tfrac{1}{2}d^2 -ba &  \\
                &  &  \tfrac{1}{2}d^2 \\
             \end{array}
           \right].
\end{equation}
On the other hand, we have that $-a \theta (Y_2)=[\theta (Z), \theta (Y_1)]=0$, so $\theta (Y_2)=0$ and analogously $\theta (Y_1)=0$. Also, $\theta(Z)=0$ because it is skew-symmetric and $\dim \ngo = 1$. Then, $d \theta(Y_3) = 0,$  from which it is follows that either $\theta (Y_3)=0$ or $d=0.$

If $\theta (Y_3)=0$ then $\dim \ngo=2$, a contradiction. Therefore, $\theta (Y_3)\neq0$ and $d=0.$ In this case, from the Ricci operator (\ref{riccidim4g5n1}) and Theorem \ref{solisemal}, (ii) we obtain that the cosmological constant is given by $c=-a b,$ and so $a b>0$ since $c<0.$ It follows that $\ug$ is isomorphic to $\RR \oplus \slg_{2}(\RR),$ and by Theorem \ref{solisemal}, (i) $\ggo \simeq (\RR \oplus \slg_{2}(\RR)) \ltimes \RR.$ Hence, by using \cite[Theorem 4.1]{BB4} we conclude that $G$ is the forth group of the first table (\cite[pp.~44]{BB4}). In particular, the metric $g$ is a product of constant curvature metrics, so by Lemma \ref{prodcurvcte} $(G/K,g)$ is an Einstein manifold.

\subsubsection{$\dim \ggo = 5, \dim \ngo = 2$}\label{M4G5N2}$\,$

Here, $\dim \hg=2$, $\dim \ug=3$, $\ngo = \RR^2$ is abelian, and $\ug$ is isomorphic to one of the following Lie algebras:
\[
\RR^3, \qquad \slg_2(\RR), \qquad \sug(2).
\]
From Corollary \ref{Unocomp} we have that $\ug \neq \sug(2)$. If $\ug = \RR^3$ then by Lemma \ref{gsol}, $(G/K,g)$ is equivariantly isometric to $\left(\SO(2)\ltimes S)/\SO(2), g_S\right)$, where $S$ is the connected Lie subgroup of $G$ with Lie algebra $\pg$ (which is solvable) and $g_S$ is the $\left(\SO(2)\ltimes S\right)$-invariant metric determined by the inner product $g(eK)$ on $T_{eK}(G/K) \simeq \pg.$

If $\ug=\slg_2(\RR)$, then $\theta$ is either trivial or the standard representation of $\slg_2(\RR)$ on $\RR^2$. If it is trivial, then we are in a product case. Otherwise, $G = \Sl_2(\RR) \ltimes \RR^2$. Let $\{Z,Y_1,Y_2\}$ be a basis of $\ug$ such that $Z \in \kg,$ $\{Y_1,Y_2\}\in \hg$ and the Lie bracket is as follows:
\[
[Z,Y_1]=2 Y_2, \quad [Z,Y_2]=-2 Y_1, \quad [Y_1,Y_2]=-2 Z.
\]
Let $\{X_1,X_2\}$ be a basis of $\ngo$ such that, with respect to this basis, $\theta$ is given by
\[
\theta(Z) = \left[ \begin{smallmatrix} 0 & -1 \\ 1 & 0 \end{smallmatrix} \right], \quad
\theta(Y_1) = \left[ \begin{smallmatrix} 1 & 0 \\ 0 & -1 \end{smallmatrix} \right], \quad
\theta(Y_2) = \left[ \begin{smallmatrix} 0 & 1 \\ 1 & 0 \end{smallmatrix} \right], \quad
\]
% i.e.~the seventh group of the first table of \cite[Theorem~ 4.1]{BB4}
%\theta(e_1)=\left(
%              \begin{array}{cc}
%                0 & -1 \\
%                1 & 0 \\
%              \end{array}
%            \right), \quad \theta(e_2)= \left(
%                                          \begin{array}{cc}
%                                            1 & 0 \\
%                                            0 & -1 \\
%                                          \end{array}
%                                        \right), \quad \theta(e_3)=\left(
%                                                         \begin{array}{cc}
%                                                           0 & 1 \\
%                                                           1 & 0 \\
%                                                         \end{array}
%                                                       \right).
and let $\ip_0$ be the inner product on $\pg$ such that $\{Y_1,Y_2,X_1,X_2\}$ is an orthonormal basis. The isotropy representation has two non-equivalent irreducible summands $\hg$ and $\ngo$, thus all $\ad(\kg)$-invariant inner products on $\pg$ are given by
\[
\ip_{\alpha,\beta} = \alpha \cdot \ip_0|_{\hg} + \beta \cdot \ip_0 |_{\ngo}, \qquad \alpha,\beta>0.
\]
An orthonormal basis of $\pg$ with respect to $\ip_{\alpha,\beta}$ is given by $\left\{\frac{Y_1}{\sqrt{\alpha}}, \frac{Y_2}{\sqrt{\alpha}}, \frac{X_1}{\sqrt{\beta}}, \frac{X_2}{\sqrt{\beta}}\right\}$. A straightforward calculation shows that with these inner products $\ip_{\alpha,\beta}$, conditions (i)-(iv) from Theorem~ \ref{solisemal} are satisfied, and thus they all give rise to expanding algebraic solitons.
%Then, the matriz $\ad(e_1)|_{\pg}$ with respect to this basis is
%$$
% \ad(e_1)|_{\pg}=\left(
%  \begin{array}{cccc}
%    0 & -2 &  &  \\
%    2 & 0 &  &  \\
%     &  & 0 & -1 \\
%     &  & 1 & 0 \\
%  \end{array}
%\right).
%$$
%So, the isotropy representation of $\kg$ on $\pg$ has two non-equivalent irreducible components. Let $h_{a,b}=\left[
%                                                                             \begin{array}{cc}
%                                                                               a I_{2 \times 2} &  \\
%                                                                                & b I_{2 \times 2} \\
%                                                                             \end{array}
%                                                                           \right],
%$ with $a>0, b>0.$ Consider the inner products
%\begin{equation}\label{piab}
%\ip_{a,b}:= \la h_{a,b}\cdot, h_{a,b}\cdot \ra.
%\end{equation}
%Then,
%$$
%\begin{array}{lll}
%  \la e_i,e_j \ra_{a,b} & = & a^2 \delta_{ij}, \quad i,j=2,3, \\
%  \la X_k,X_l \ra_{a,b} & = & b^2 \delta_{kl}, \quad k,l=1,2.
%\end{array}
%$$ So, a orthonormal basis of $\pg$ with respect to $\ip_{a,b}$ is $\{\frac{e_2}{a},\frac{e_3}{a}, \frac{X_1}{b},\frac{X_2}{b}\}.$

Observe that, with these metrics, $(\Sl_2(\RR) \ltimes \RR^2)/\SO(2)$ is not isometric to ${\RR H}^2 \times \RR^2$. Indeed, the plane $\pi=\left\la \frac{X_1}{\sqrt{\beta}},\frac{X_2}{\sqrt{\beta}} \right\ra$ has positive sectional curvature $2/\alpha$, and in $\RR H^2 \times \RR^2$ the sectional curvature is everywhere non-positive.

%$$
%\begin{array}{lll}
%K\left(\frac{X_1}{b},\frac{X_2}{b}\right)&=&-\tfrac{3}{4}\left\|\left[\frac{X_1}{b},\frac{X_2}{b}\right]_{\pg}\right\|^2-\tfrac{1}{2}\left\langle \left[\frac{X_1}{b},\left[\frac{X_1}{b},\frac{X_2}{b}\right]_{\ggo}\right]_{\pg},\frac{X_2}{b}\right\rangle \\ & &-\tfrac{1}{2}\left\langle\left[\frac{X_2}{b},\left[\frac{X_2}{b},\frac{X_1}{b}\right]_{\ggo}\right]_{\pg},\frac{X_1}{b} \right\rangle
%+ \left\|U\left(\frac{X_1}{b},\frac{X_2}{b}\right)\right\|^2 \\
%& &-\left\langle U\left(\frac{X_1}{b},\frac{X_1}{b}\right),U\left(\frac{X_2}{b},\frac{X_2}{b}\right) \right\rangle=\frac{1}{a^2}>0.
%\end{array}
%$$ Then $\pi$ has positive sectional curvature and ${\RR H}^2 \times \RR^2$ has every theirs sectional curvatures less than or equal to zero.

%We know that $\ricci_{\ug}=cI+C_{\theta}$ by Theorem \ref{solisemal} (ii). So, we have that
%$$
%\left[
%  \begin{array}{cc}
%    -\frac{1}{a^2} & 0 \\
%    0 & -\frac{1}{a^2} \\
%  \end{array}
%\right]=\ricci_{\ug}=cI+C_{\theta}= c I + \left[
%                                         \begin{array}{cc}
%                                           \frac{2}{a^2} & 0 \\
%                                           0 & \frac{2}{b^2} \\
%                                         \end{array}
%                                       \right].
%$$ It follows that $a=b.$

Finally, we consider $\sg$ the solvable Lie algebra of $\ggo$ generated by $\left\{\tfrac{Y_1}{\sqrt{\alpha}},\tfrac{Z + Y_2}{\sqrt{\alpha}}, \tfrac{X_1}{\sqrt{\beta}},\tfrac{X_2}{\sqrt{\beta}}\right\}.$ The Lie bracket is the following:
\begin{eqnarray}\label{subals}
\left[\tfrac{Y_1}{\sqrt{\alpha}}, \tfrac{Z + Y_2}{\sqrt{\alpha}}\right]= -\tfrac{2}{\sqrt{\alpha}} \left(\tfrac{Z + Y_2}{\sqrt{\alpha}}\right), \quad \left[\tfrac{Y_1}{\sqrt{\alpha}},\tfrac{X_1}{\sqrt{\beta}}\right]=\tfrac{1}{\sqrt{\alpha}}\left(\tfrac{X_1}{\sqrt{\beta}}\right), \\
\nonumber \left[\tfrac{Y_1}{\sqrt{\alpha}}, \tfrac{X_2}{\sqrt{\beta}}\right]=-\tfrac{1}{\sqrt{\alpha}}\left(\tfrac{X_2}{\sqrt{\beta}}\right), \quad \left[\tfrac{Z + Y_2}{\sqrt{\alpha}},\tfrac{X_1}{\sqrt{\beta}}\right]=\tfrac{2}{\sqrt{\alpha}} \left(\tfrac{X_2}{\sqrt{\beta}}\right).
\end{eqnarray}
Let $S$ be the connected Lie subgroup of $G$ with Lie algebra $\sg.$ We observe that $S$ acts transitively on $(\Sl_2(\RR) \ltimes \RR^2)/\SO(2).$ Indeed, if we take an Iwasawa decomposition for $\ug\simeq \slg_2(\RR)$ we get $\ug=\kg \oplus \ag \oplus \widetilde{\ngo}= \la Z \ra \oplus \la Y_1 \ra \oplus \la Z+ Y_2\ra.$ Then, $G= KA\widetilde{N} \ltimes \RR^2,$ where $A$ and $\widetilde{N}$ are the connected Lie subgroups of $U$ with Lie algebras $\ag$ and $\widetilde{\ngo}$ respectively. Observe that the compact factor of the Iwasawa decomposition is precisely the isotropy $K$. Then, it is easy to see that $G=SK$, and it follows that $S$ acts transitively on $G/K$. Thus, $(G/K,g)$ is isometric to $(S,g_S)$, where $g_S$ is the left-invariant metric on $S$ defined by the inner product $\ip_{\alpha, \beta}$ on $T_{eK}G/K = T_e S$. This is a solvsoliton, since its Ricci operator is given by
\[
\ricci=-\tfrac{6}{\alpha}I+\left[
  \begin{smallmatrix}
    0 &  &  &  \\
     & 0 &  &  \\
     &  & \tfrac{6}{\alpha} &  \\
     &  &  & \tfrac{6}{\alpha} \\
  \end{smallmatrix}
\right]=-\tfrac{6}{\alpha}I+D,
\]
and $D \in \Der(\sg)$.

\subsubsection{$\dim \ggo = 5, \dim \ngo \geq 3$}$\,$

In this case, $\ug$ is necessarily abelian and Lemma \ref{gsol} applies.

\section{Proof of Theorem \ref{main5}: $\dim G/K = 5$}\label{dim5}

Our aim in this section is to classify $5$-dimensional simply connected, expanding algebraic solitons up to isometry. But in fact we will do much more than that: we classify them according to their presentation as homogeneous spaces (that is, up to equivariant isometry), with the additional hypothesis that $(U/K, g|_{U/K})$ is almost effective (recall that by Corollary \ref{U/Kae} this condition can always be obtained if we suitably shrink the transitive group $G$). Also, a main goal is to prove that they are all isometric to solvsolitons.

By almost-effectiveness, $\dim U$ is bounded in terms of $\dim U/K$, and this yields
\begin{equation}\label{B}
\dim G \leq \dim N + \tfrac12 (\dim U/K)(\dim U/K + 1).
\end{equation}

As in the previous sections, we divide into cases according to the dimension of $G$ and its nilradical $N$. Recall that if $\dim \ngo = 0$ then by Lemma \ref{lemadimn}, \eqref{lemadimn-n0} $(G/K,g)$ is Einstein. The possible dimensions for $G$ according to Proposition \ref{gap} are $5\leq \dim G \leq 11$ or $\dim G = 15$. The cases with $\dim G = 15$ are space forms, and in particular Einstein.

\subsection{$\dim \ggo=5$}

\subsubsection{$\dim \ggo = 5, \dim \ngo = 1$}\label{m5g5n1}$\,$

Here, $\ug = \hg$ is a reductive Lie algebra isomorphic to $\RR^4,  \RR \oplus \slg_2(\RR)$ or $\RR \oplus \sug(2)$. Moreover, $\ug \neq \RR^4$ by Lemma \ref{lemadimn}, \eqref{lemadimn-zn2}.

Let $\{Y_1,Y_2,Y_3,Y_4\}$ be an orthonormal basis of $\ug =\RR \oplus \vg$, $\vg =\sug(2)$ or $\slg_2(\RR)$, such that $\{Y_1,Y_2,Y_3\}$ is a basis for $\vg$ and the Lie bracket of $\ug$ restricted to $\vg$ is given by (see~ \cite{Mln}):
\begin{equation}\label{corcabd}
[Y_1,Y_2]=d Y_3, \quad [Y_2,Y_3]=a Y_1, \quad [Y_3,Y_1]=b Y_2 \quad a,b,d \in \RR,\quad a,b,d \neq 0.
\end{equation}
From the Jacobi condition, $\ad_{\ug}(Y_4)$ is a derivation of $\vg,$ and so we have that $\ad_{\ug}(Y_4)=e \ad_{\ug}(Y_1)+f \ad_{\ug}(Y_2)+ g \ad_{\ug}(Y_3)$, for some $e,f,g\in \RR$. Thus we can compute the Ricci operator $\ricci_{\ug}$ with respect to this basis, in terms of $a,b,d,e,f$ and $g$. It is worth noticing that some of the off-diagonal entries of the matrix of $\ricci_\ug$ with respect to the basis $\{Y_1,Y_2,Y_3,Y_4 \}$ are
\begin{equation}\label{Riccioffdiag}
-\tfrac12 (b-~d)^2 e, -\tfrac12 (a-~d)^2 f, -\tfrac12 (a-b)^2 g.
\end{equation}
Also, since $\dim \ngo = 1$ we have that $\theta|_\vg = 0$. Using Theorem \ref{solisemal}, (ii) we obtain that
\[
\ricci_{\ug}=\left[
               \begin{smallmatrix}
                 c &  &  &  \\
                  & c &  &  \\
                  &  & c &  \\
                  &  &  & c+ \ast
               \end{smallmatrix}
             \right].
\]
Therefore, all three entries in \eqref{Riccioffdiag} must vanish. We will show that this cannot happen if $c<0$. Indeed, assume first that $a,b,d$ are pairwise different. Then we must have that $e=f=g=0$, and thus $\ricci_\ug|_\vg$ is actually the Ricci operator of a left-invariant metric on $\widetilde{\Sl_2(\RR)}$ or $\SU(2)$; this is a contradiction because these groups do not admit left-invariant Einstein metrics with $c<0$. On the other hand, if two among $a,b,d$ are equal, say $a=b$, then either $e=f=0$ or $b=d$. In any case, we obtain $c = (\ricci_\ug)_{33} = \tfrac{d^2}{2} > 0$, which is a contradiction.
%
%By using Maple (si???? ............ ), we have that the unique solution such that $(\ricci_{\ug})_{11} = (\ricci_{\ug})_{22} = (\ricci_{\ug})_{33}$ and $(\ricci_{\ug})_{ij}=0,$ for all $i \neq j,$ satisfies $a=b=d$, and in this case $c=\frac{1}{2}a^2 >0$, a contradiction.

\subsubsection{$\dim \ggo = 5, \dim \ngo = 2$}$\,$

We have that $\ug$ must be isomorphic to $\RR^3, \sug(2)$ or $\slg_2(\RR)$. If $\ug \simeq \RR^3,$ then $G$ is solvable and $(G,g)$ is a solvsoliton. On the other hand, from Corollaries \ref{Unocomp} and \ref{unosl2} we have that $\hg$ cannot be isomorphic neither to $\sug(2)$ nor to $\slg_2(\RR).$
%From Lemma \ref{lemadimn}, \eqref{lemadimn-zn2} $\ug \neq \RR^3$, and from Corollary \ref{Unocomp} $\ug \neq \sug(2)$, so $\ug = \slg_2(\RR)$. If $\theta=0$ we are in a product case, so we will focus in the case where $\theta$ is faithful. It is not hard to show by a straightforward calculation that to ensure that the compatibility condition (iv) from Theorem \ref{solisemal} holds, there must be an orthonormal basis of $(\ngo = \RR^2,\ip|_\ngo)$ such that $\theta$ is the standard representation of $\slg_2(\RR)$ by $2\times 2$ traceless matrices. Using a Milnor basis $\{e_1,e_2,e_3 \}$ as in the previous case, with $a<0$ and $b,d>0$, we obtain the operators $\ricci_\ug$ and $C_\theta$ diagonalized, and they are given by
%\[
%\ricci_\ug = \left[ \begin{smallmatrix} \tfrac12 (a^2 - (b-d)^2) &  & \\  & \tfrac12 (b^2 - (a-d)^2) &  \\ &  & \tfrac12 (d^2-(a-b)^2)  \end{smallmatrix}\right], \qquad
%C_\theta = \left[ \begin{smallmatrix} 0  &  &\\  &  -\tfrac{ad}{2} &  \\ &  & -\tfrac{ab}{2}  \end{smallmatrix}\right].
%\]
%We observe that $(\ricci_\ug)_{22}-(C_\theta)_{22} - (\ricci_{\ug})_{33} + (C_\theta)_{33} = \tfrac12 (b-d)(-3a + 2b + 2d)$, and since it must vanish and $-3a + 2b + 2d > 0$, we have $b=d$. But then looking at the $1-1$ entry, we see that $c = \tfrac12 a^2 > 0$, which is a contradiction. Therefore, there are no solitons in this case.

%%Terminar!!!  ($\ug = \slg_2(\RR)$ ??? )..........

\subsubsection{$\dim \ggo = 5, \dim \ngo \geq 3$}$\,$

It follows from Lemma \ref{lemadimn}, \eqref{lemadimn-u2} that in this case $(G,g)$ is a solvsoliton.

\subsection{$\dim \ggo=6$}$\,$

By \eqref{B} we will only consider the cases $\dim \ngo = 1,2,3$.

\subsubsection{$\dim \ggo = 6, \dim \ngo = 1$}$\,$

This case is empty, since by Lemma \ref{lemadimn}, \eqref{lemadimn-zn2} we must have $\dim \zg(\ug) \leq 1$ and any  $5$-dimensional reductive Lie algebra $\ug$ has $\dim \zg(\ug) \geq 2$.

\subsubsection{$\dim \ggo = 6, \dim \ngo = 2$}$\,$

In this case, $(U/K,g)$ is a $3$-dimensional homogeneous space with $U$ reductive and $\dim U = 4$. According to the table in \cite[pp.~58]{BB4}, there are four possibilities for $U/K$. The case $U = \U(2)$ can be omitted by Corollary \ref{Unocomp}. If $U = \RR \times \SO(3), K = \SO(2)$ then $g$ is a product of metrics of constant curvature. In particular, $\ricci_{U/K}$ has two positive eigenvalues. But $\Lie(\SO(3)) = \sug(2)$ and then $\theta|_{\sug(2)} = 0$ since $\dim \ngo = 2$. Hence $\dim \ker(C_\theta) \geq 2$, and this together with Theorem~ \ref{solisemal},~ (ii) contradict the fact that $c<0$.

Therefore, the only possibility is that $\ug = \RR\oplus \slg_2(\RR)$. Following \cite[Example 2.8]{homRF}, we take a basis $\{Z,Y_1,Y_2,Y_3 \}$ of $\ug$ such that $\{Y_1,Y_2,Y_3 \}$ is an orthonormal basis for $\hg$ and the Lie bracket is given by
\begin{align*}
&[Z,Y_2] = Y_3, & [Y_2,Y_3] = a Y_1 + b Z, & &[Y_1,Y_3] = -d Y_2,\\
&[Z,Y_3] = -Y_2, & & &[Y_1, Y_2] = d Y_3,
\end{align*}
with $a,b,d \in \RR$, $ad+b<0$. Then the Ricci operator for $U/K$ with respect to this basis is given by
\begin{equation}\label{Ricabd}
\ricci_{U/K} = \left[ \begin{array}{ccc} \tfrac12 a^2 & & \\ &-\tfrac12 a^2 + b + ad& \\ & & -\tfrac12 a^2  + b + ad \end{array}\right].
\end{equation}
Observe that the center of $\ug$ is generated by $Y_1 - d Z$. Since $\ngo$ is the nilradical of $\ggo$, $\theta|_{\zg(\ug)}$ is faithful. There are only two possibilities for $\theta$: either it is faithful, or $\theta|_{\slg_2(\RR)} = 0 $. Fix an orthonormal basis for $(\ngo, \ip|_\ngo)$, so that $\End(\ngo)$ may be identified with $2\times 2$ matrices.

If $\theta|_{\slg_2(\RR)} = 0$, then using that $\slg_2(\RR) = [\ug,\ug]$ we have that $a\theta(Y_1) + b \theta(Z) = 0$. In particular, if $a\neq 0$ then $\theta(Y_1)$ is skew-symmetric, so $C_\theta Y_1 = 0$. Using \eqref{Ricabd} we see that in order for condition (ii) of Theorem \ref{solisemal} to hold we must have $c = \tfrac12 a^2 \geq 0$, and this is a contradiction. On the other hand, if $a=0$, then $\{Z,Y_2,Y_3 \}$ is a basis for $\slg_2(\RR)$ and in particular $\theta(Z) = 0$. Then the center $\zg(\ug) = \RR (Y_1-d Z)$ is $B$-orthogonal to $\kg$, so $\zg(\ug)\subseteq \hg$ and $d=0$. Conditions (ii) and (iv) from Theorem \ref{solisemal} are satisfied if and only if $\theta(Y_1)$ is a normal operator and $\tr S(\theta(Y_1))^2 = -b$. If this is the case, we obtain an expanding algebraic soliton with $U/K = \RR \times \Sl_2(\RR) / \SO(2)$, and the metric $g$ is a Riemannian product
\[
\Sl_2(\RR)/\SO(2) \times S,
\]
where $\Sl_2(\RR)/\SO(2) \simeq \RR H^2$ is the hyperbolic $2$-space and $S$ is a $3$-dimensional solvsoliton with Lie algebra given by $\RR Y_1 \ltimes_\theta \RR^2$.

%, whose metric is clearly isometric to a product of a constant curvature metric on $\Sl_2(\RR)/\SO(2)$ and a $3$-dimensional solvsoliton metric on $S = \RR \ltimes \RR^2$, the action of $\RR$ on $\RR^2$ being given at the Lie algebra level by the normal operator $\theta(Y_1)$ ........ chequear toda la ultima oracion!!

Now assume that $\theta$ is faithful. We have that
\[
\theta(Y_1 - d Z) = e \left[ \begin{smallmatrix} 1 & 0\\0 & 1 \end{smallmatrix} \right],
%\theta(Z) = \tfrac12 \left[ \begin{smallmatrix} 0 & -1\\ 1& 0 \end{smallmatrix} \right], \qquad
%\theta(Y_2) = \tfrac{\sqrt{-b}}{2}  \left[ \begin{smallmatrix} 1 & 0\\ 0& -1 \end{smallmatrix} \right], \qquad
%\theta(Y_3) = \tfrac{\sqrt{-b}}{2} \left[ \begin{smallmatrix} 0 & 1\\ 1& 0 \end{smallmatrix} \right],
\]
for some $e\neq 0$. This implies first that $Y_1-dZ$ is Killing-orthogonal to $Z$, thus $d=0$ by definition of $\pg$. On the other hand, applying $\theta$ and taking traces on $[Y_2,Y_3] = a Y_1 + b Z$ yields $a \tr \theta(Y_1) = 0$, hence $a=0$. To sum up, we obtained that $\zg(\ug) = \RR Y_1 \subseteq \hg$, and that $\{Z,Y_2,Y_3 \}$ is a basis for the ideal $\slg_2(\RR)$. In order to better visualize the results in this case, we proceed by varying inner products instead of Lie brackets. So we set $a=d=0$, $b=-4$, and since any representation of $\RR \oplus \slg_2(\RR)$ is equivalent to the standard one, assume that
\[
\theta(Z) = \tfrac12 \left[ \begin{smallmatrix} 0 & -1\\ 1& 0 \end{smallmatrix} \right], \qquad
\theta(Y_1) = \left[ \begin{smallmatrix} 1 & 0\\0 & 1 \end{smallmatrix} \right], \qquad
\theta(Y_2) = \left[ \begin{smallmatrix} 1 & 0\\ 0& -1 \end{smallmatrix} \right], \qquad
\theta(Y_3) = \left[ \begin{smallmatrix} 0 & 1\\ 1& 0 \end{smallmatrix} \right].
\]
The isotropy representation has a decomposition into irreducible factors of the form $\pg = \pg_1 \oplus \pg_2 \oplus \pg_3$, where $Y_1\in \pg_1$, $Y_2,Y_3 \in \pg_2$, $\ngo = \pg_3$. Fix an inner product $\ip_0$ that makes the basis $\{Y_1,Y_2,Y_3,X_1,X_2 \}$ orthonormal, and then any $\ad(\kg)$-invariant inner product on $\pg$ that makes $\hg \perp \ngo$ is of the form
\[
\ip_{\alpha,\beta,\gamma} = \alpha \cdot \ip_0|_{\pg_1} + \beta  \cdot \ip_0|_{\pg_2} + \gamma \cdot \ip_0|_{\pg_3}, \qquad \alpha,\beta,\gamma > 0.
\]
Conditions (i), (iii) and (iv) from Theorem \ref{solisemal} are clearly satisfied for these inner products. Regarding condition (ii), we calculate the Ricci operator of $U/K$ with the metric induced by $\ip_{\alpha,\beta,\gamma}$, and $C_\theta$, with respect to the orthonormal basis $\{\tfrac{Y_1}{\sqrt \alpha},\tfrac{Y_2}{\sqrt \beta},\tfrac{Y_3}{\sqrt \beta} \}$, to obtain:
\[
\ricci_{U/K,\alpha,\beta} =  \left[ \begin{array}{ccc}  0 & & \\ &  -\tfrac{4}{\beta}& \\ & &  -\tfrac{4}{\beta} \end{array}\right], \qquad
C_\theta =  \left[ \begin{array}{ccc}  \tfrac{2}{\alpha} & & \\ & \tfrac{2}{\beta}& \\ & &  \tfrac{2}{\beta} \end{array}\right].
\]
Thus, $c=-2/\alpha$, and in order for (ii) to hold we must have $\beta = 3 \alpha$. We see that, up to a scalar multiple, these algebraic solitons are all isometric to each other, and they are actually Einstein.

Arguing as in the case \ref{M4G5N2} we see that in both cases ($\theta|_{\slg_2(\RR)} = 0$ or $\theta$ faithful), all expanding algebraic solitons that we obtained are isometric to solvsolitons.

\subsubsection{$\dim \ggo = 6, \dim \ngo = 3$}\label{m5g6n3}$\,$

Assume first that $\ngo$ is abelian. Observe that $(U/K, g)$ is a $2$-dimensional space-form, with $\ug$ reductive and non-compact, thus up to a cover we have $U = \Sl_2(\RR)$, $K = \SO(2)$, $\ug = \slg_2(\RR)$, and $\kg = \Lie(\SO(2)) = \RR Z$, where $Z$ is a $2\times 2$ skew-symmetric matrix in $\slg_2(\RR)$. As in the previous case, we proceed by varying inner products. Thus, let us disregard the inner product fixed on $\pg$, and consider only the Lie-theoretical information. There are two non-trivial, non-equivalent representations of $\slg_2(\RR)$ on $\RR^3$: one is given by the adjoint representation, and we will call it $\theta_{\ad}$; the other is given by a direct sum of the standard $2\times 2$ matrix representation of $\slg_2(\RR)$ and a trivial one-dimensional representation, and we will call it~ $\theta_{12}$. In order to determine all possible $G$-invariant metrics which are expanding algebraic solitons we seek for $\ad(\kg)$-invariant inner products on $\pg$ satisfying conditions (i)-(iv) from Theorem \ref{solisemal}. In both cases, the linear isotropy representation has a decomposition into irreducible factors of the form $\pg = \pg_1 \oplus \pg_2 \oplus \pg_3$, where $\pg_1 = \hg$, $\pg_2 \oplus \pg_3 = \ngo$ and $\dim \pg_2 = 2,\dim \pg_3 = 1$.

In the case of $\theta_{\ad}$, $\pg_1$ is equivalent to $\pg_2$. However, Theorem \ref{solisemal} implies that in order to give rise to an algebraic soliton, an inner product $\ip$ must satisfy $\langle \pg_1, \pg_2 \oplus \pg_3\rangle = 0$.  Fix a basis $\{Y_1, Y_2\}$ of $\hg$ such that $[Z,Y_1] = -2 Y_2, [Z,Y_2] = 2Y_1, [Y_1,Y_2] = 2 Z$, and consider the basis $\{X_1,X_2,X_3\}$ of $\ngo$ such that, with respect to this basis,
\[
\theta_{\ad} (Z) = \left[\begin{smallmatrix} 0 & 2 & 0 \\ -2 & 0 & 0 \\ 0 & 0 & 0 \end{smallmatrix}\right], \quad
\theta_{\ad} (Y_1) = \left[\begin{smallmatrix} 0 & 0 & 0 \\ 0 & 0 & 2 \\ 0 & 2 & 0 \end{smallmatrix}\right], \quad
\theta_{\ad} (Y_2) = \left[\begin{smallmatrix} 0 & 0 & -2 \\ 0 & 0 & 0 \\ -2 & 0 & 0 \end{smallmatrix}\right].
\]
Fix also an inner product $\ip_0$ on $\pg$ that makes $\{Y_1,Y_2,X_1,X_2,X_3 \}$ orthonormal; it is easy to see that $\ip_0$ satisfies (i)-(iv) from Theorem \ref{solisemal}. All $\ad(\kg)$-invariant inner products on $\pg$ that make $\pg_1\perp \pg_2$ are given by
\[
\ip_{\alpha,\beta,\gamma} = \alpha \cdot \ip_0|_{\pg_1}  + \beta \cdot \ip_0|_{\pg_2}  + \gamma \cdot \ip_0|_{\pg_3}, \qquad \alpha,\beta,\gamma>0,
\]
and for $\ip_{\alpha,\beta,\gamma}$ in order to satisfy condition (iv) we must have that $\beta=\gamma$. It is now easy to check that any inner product of the form $\ip_{\alpha,\beta,\beta}$ satisfies (i)-(iv) and thus gives rise to an expanding algebraic soliton.

For $\theta_{12}$, all three summands $\pg_1, \pg_2, \pg_3$ are mutually inequivalent representations. As in the previous case, fix an inner product $\ip_0$ that makes the basis $\{Y_1,Y_2,X_1,X_2,X_3 \}$ orthonormal, where $\{X_1,X_2,X_3\}$ is such that
 %(here, $\{ X_1,X_2 \}$ is a basis of $\pg_2$ such that $\theta_{12}(Z)X_1 = - X_2, \theta_{12}(Z)X_2 = X_1$, and $X_3\in \pg_3$).
\[
\theta_{12} (Z) = \left[\begin{smallmatrix} 0 & 1 & 0 \\ -1 & 0 & 0 \\ 0 & 0 & 0 \end{smallmatrix}\right], \quad
\theta_{12} (Y_1) = \left[\begin{smallmatrix} 1 & 0 & 0 \\ 0 & -1 & 0 \\ 0 & 0 & 0 \end{smallmatrix}\right], \quad
\theta_{12} (Y_2) = \left[\begin{smallmatrix} 0 & 1 & 0 \\ 1 & 0 & 0 \\ 0 & 0 & 0 \end{smallmatrix}\right].
\]
All $\ad(\kg)$-invariant inner products on $\pg$ are given by
\[
\ip_{\alpha,\beta,\gamma} = \alpha \cdot \ip_0|_{\pg_1}  + \beta \cdot \ip_0|_{\pg_2}  + \gamma \cdot \ip_0|_{\pg_3}, \qquad \alpha,\beta,\gamma>0,
\]
and an easy calculation shows that all of them satisfy (i)-(iv) from Theorem \ref{solisemal}.

Regarding $\ngo$, the other possibility is that $\ngo = \hg_3$, the $3$-dimensional Heisenberg Lie algebra. Up to equivalence, there is only one non-trivial representation $\theta: \slg_2(\RR)\to \Der(\hg_3)$, and is given precisely by $\theta_{12}$. The above argument and a straightforward calculation show that an $\ad(\kg)$-invariant inner product $\ip_{\alpha,\beta,\gamma}$ on $\pg$ gives rise to an algebraic soliton if and only if $\alpha = 4 \beta^2 /\gamma $ (the only condition from Theorem \ref{solisemal} that is not satisfied for all $\alpha, \beta, \gamma$ is (ii) ). 

On the other hand, if $\ngo = \hg_3$ and $\theta=0$ we obtain a Riemannian product of the hyperbolic $2$-space $\Sl_2(\RR)/\SO(2) = \RR H^2$ and the $3$-dimensional nilsoliton on the Heisenberg Lie group $H_3$, and this product is an algebraic soliton if and only if the comsmological constants of both spaces coincide.

Finally, arguing as in the case \ref{M4G5N2} we see that all algebraic solitons that we obtained are isometric to solvsolitons. In the case of $\theta_{\ad}$, they are all homothetic (i.e.~isometric up to scaling) to a fixed $5$-dimensional solvsoliton. For $\theta_{12}$, $\ngo = \RR^3$, they are all homothetic to the product of $\RR$ and the $4$-dimensional solvsoliton corresponding to the case \ref{M4G5N2}. And for $\theta_{12}$, $\ngo = \hg_3$, they are all homothetic to another $5$-dimensional solvsoliton.

%Lo de la dimension de los nuevos nilradicales esta mal!!! sacar! ...............

%

\subsection{$\dim \ggo=7$} $\,$

By \eqref{B} we need only to consider the cases $\dim \ngo = 1,2$.

\subsubsection{$\dim \ggo = 7, \dim \ngo =1$}$\,$

By Lemma \ref{lemadimn}, \eqref{lemadimn-zn2} we have that $\dim \zg(\ug) \leq 1$. Since $\ug$ is reductive and $6$-dimensional, it must be semisimple. Using Lemma \ref{lemadimn}, \eqref{lemadimn-n1uss} we obtain $\theta = 0$, hences this is a product case.

\subsubsection{$\dim \ggo = 7, \dim \ngo =2$}$\,$

This case is empty because $U/K$ is almost effective and $3$-dimensional, with $\dim U = 5$, and this contradicts Proposition \ref{gap}.

\subsection{$\dim \ggo=8$}$\,$

As in the previous case, we need only to consider $\dim \ngo = 1,2$.

\subsubsection{$\dim \ggo = 8, \dim \ngo =1$}\label{m5g8n1}$\,$

Assume that $\ggo$ is non-unimodular and $\theta\neq 0$, otherwise by Lemma \ref{lemadimn}, \eqref{lemadimn-n1uss} we are in a product case. Then, by using Proposition \ref{unimod} we get a simply connected expanding algebraic soliton $(G_1/K, g)$ of dimension 4, with $G_1$ a normal subgroup of $G$ of codimension one. It is not Einstein, since $G_1$ is unimodular and not semisimple, thus it must be one of the spaces listed in Table \ref{ta4}. Thus, $\ggo$ contains an ideal isomorphic to $\RR \oplus \slg_2(\CC)$. On the other hand, $U/K$ is an almost effective homogeneous space of dimension 4, and comparing with the table in \cite[pp.~45]{BB4} we conclude that the only possibility is $U = \RR \times \SO_0(3,1)$ (up to a cover), $K = \SO(3)$, and the metric restricted to $U/K$ is a product of $\RR$ and a metric of constant negative curvature on $\SO_0(3,1)/\SO(3) = \RR H^3$. Let $c<0$ be the Einstein constant of the metric on $\SO_0(3,1)/\SO(3)$, and let $\slg_2(\CC) = \Lie(\SO_0(3,1)) \subseteq \ug$. It is clear that the only possibility for $\theta$ is that
\[
\theta|_{\slg_2(\CC)} = 0,\qquad \theta(W) = -c,
\]
where $W\in \zg(\ug)\subseteq \hg$ is a unitary vector. In this way, we obtain an algebraic soliton on the homogeneous space $\left(\SO_0(3,1) \times Z_2\right)/\SO(3)$. A quick inspection of its Ricci operator shows that this is actually an Einstein metric.

\subsubsection{$\dim \ggo = 8, \dim \ngo =2$}$\,$

Since we have equality in \eqref{B}, $(U/K, g|_{U/K})$ is a space form. By Corollary \ref{Unocomp} it cannot have positive curvature, thus up to a cover the Lie group $U$ is $\SO(3)\ltimes \RR^3$ or $\SO_0(3,1)$. But $\SO(3)\ltimes \RR^3$ is not reductive, so the only remaining case is $U = \SO_0(3,1)$ (up to a cover). In this case, $\ug \simeq \slg_2(\CC)$, a $6$-dimensional simple Lie algebra, thus $\theta$ is either faithfull or trivial. Since $\dim \ngo = 2$, it must be trivial, and we are in a product case.

\subsection{$9\leq \dim \ggo \leq 11$}$\,$

The cases with $\dim \ngo \geq 2$ may be disregarded because of \eqref{B}, so let us assume that $\dim \ngo = 1$. If $\ggo$ is unimodular then by Lemma \ref{lemadimn}, \eqref{lemadimn-n1uss} we are in a product case. On the other hand, if $\ggo$ is non-unimodular then by using Proposition \ref{unimod} and arguing as in Section \ref{m5g8n1} we get that $G_1$ must be one of the groups in Table \ref{ta4}. This in particular implies that $7\geq \dim G_1 = \dim G - 1$, which is a contradiction.

\section{Proof of Theorem \ref{main6}: $\dim G/K =6$}\label{dim6}

In this section we will show that, besides from the cases where $G$ is semisimple, there is no counterexample to the generalized Alekseevskii conjecture in dimension $6$. By Corollary \ref{cormain}, the generalized Alekseevskii's conjecture is valid up to dimension $5$. So, by using Proposition \ref{unimod} we may assume without loss of generality that $G$ is unimodular. The cases with $\dim \ngo = 0$ correspond to $G$ semisimple, and will be omitted. For $\dim \ngo = 1$, we are in a product case by Lemma \ref{lemadimn}, \eqref{lemadimn-n1uss}. The possible dimensions for $G$ are therefore $6\leq \dim G \leq 16$ and $\dim G = 21$. When $\dim G = 21$, $G/K$ is the hyperbolic space, which is diffeomorphic to $\RR^6$.

\subsection{$\dim \ggo = 6$}$\,$

This is the case of left-invariant metrics on $6$-dimensional Lie groups.

\subsubsection{$\dim \ggo =6, \dim \ngo = 2$}$\,$

We have that $\dim \ug = 4$ and thus it must be isomorphic to one of the following:
\[
\RR^4, \qquad \RR\oplus \slg_2(\RR), \qquad \RR\oplus \sug(2).
\]
If $\ug = \RR^4$ then $\ggo$ is solvable, so we may apply Lemma \ref{gsol} to conclude that $(G/K,g)$ is isometric to a solvsoliton. For the other cases, recall that since $\ngo$ is the nilradical we have that $\theta|_{\zg(\ug)}$ is faithful. Also, if $\theta: \ug \to \End(\ngo)$ is faithful, then $\ggo$ is not unimodular. Therefore, we must have $\ker \theta = \vg$, with $\vg$ a simple ideal. But then, arguing along the lines of Section \ref{m5g5n1}, we arrive at a contradiction.

\subsubsection{$\dim \ggo =6, \dim \ngo = 3$}$\,$

In this case, $\ug$ is one of the following: $\RR^3, \slg_2(\RR), \sug(2)$. But using Corollaries~ \ref{Unocomp} and \ref{unosl2} we have that $\ug \neq \sug(2)$ and $\ug \neq \slg_2(\RR)$, so $\ug = \RR^3$, $\ggo$ is solvable and $(G,g)$ is a solvsoliton.  %Finally, if $\ug = \slg_2(\RR)$, then as a differentiable manifold $G = \widetilde{\Sl_2(\RR)} \times N$, and since $N$ and $\widetilde{\Sl_2(\RR)}$ are diffeomorphic to $\RR^3$ ($N$ is a simply connected nilpotent Lie group) we have that $G$ is diffeomorphic to $\RR^6$.

\subsubsection{$\dim \ggo =6, \dim \ngo \geq 4$}$\,$

By Lemma \ref{lemadimn}, \eqref{lemadimn-u2} we have that $(G,g)$  is a solvsoliton.

\subsection{$\dim \ggo > 6$}$\,$

In these cases, we will make strong use of Corollary \ref{U/Kae} and equation \eqref{B}. Also, to deal with the cases where $\ggo$ has smaller dimension, we will do as follows: given an expanding algebraic soliton $(G/K,g)$, we take a presentation of it such that $G$ is simply connected. We may lose effectiveness, but we will still have that the action is almost-effective. Thanks to Remark \ref{Gsc} we know that $G/K = U/K \times N$ as differentiable manifolds, so in particular $U/K$ and $N$ are also simply connected. Since $N$ is nilpotent, it is diffeomorphic to a Euclidean space. Therefore, the theorem will be proved if we show that $U/K$ is diffeomorphic to a Euclidean space. In order to do so, we will rely on the fact that homogeneous spaces of dimension $\leq 4$ are classified (see \cite{BB4}), and looking at the tables of the classification we will have to rule out those cases that have a compact factor.

\subsubsection{$\dim \ggo >6, \dim \ngo = 2$}$\,$

By \eqref{B} we have that $\dim \ggo \leq 12$, and also $\dim \ggo \neq 11$ since there are no $4$-dimensional homogeneous spaces with a $9$-dimensional transitive group.

If $\dim \ggo = 12$, then $(U/K, g_{U/K})$ is a simply connected space form. Since $U$ is non-compact and reductive, we are in the case of the hyperbolic space, which is diffeomorphic to $\RR^4$.

If $8 \leq \dim \ggo \leq 10$, we look closely at the table in \cite[pp.~45]{BB4}. The only cases with $\ug$ non-compact and reductive for which $U/K$ is not diffeomorphic to a Euclidean space are $\ug = \RR \oplus \sog(4)$ and $\ug = \sog(3) \oplus \slg_2(\RR)$. For $\ug = \RR \oplus \sog(4)$, since $\dim \ngo = 2$ we have that $\theta$ must be trivial on the simple ideal $\sog(4)$. But the metric is a Riemannian product of a flat factor $\RR$ and a constant (positive) curvature metric on $\SO(4)/\SO(3) \simeq S^3$. We arrive at a contradiction by looking at the Ricci operator and using Theorem \ref{solisemal}, (ii). For the case $\ug = \sog(3) \oplus \slg_2(\RR)$ we also obtain a contradiction by following a similar argument, this time focused on the ideal $\sog(3)$.

If $\dim \ggo = 7$, we now look closely at the tabe in \cite[pp.~44]{BB4}. There are three possibilities for $U/K$ that are not diffeomorphic to a Euclidean space: $\RR^2 \times \SO(3) / \SO(2)$, $Z^2 \times \SO(3)/\SO(2)$ and $\RR \times \U(2) / \U(1)$. The case $Z^2 \times \SO(3)/\SO(2)$ is not possible because $Z^2 \times \SO(3)$ is not reductive. For $\RR^2 \times \SO(3) / \SO(2)$, the metric is a product of constant curvature metrics. We see that $\theta|_{\sog(3)}$ must be trivial because $\dim \ngo = 2$, and thus we arrive at a contradiction by looking at the Ricci operator and using Theorem \ref{solisemal}, (ii), as in the cases from the previous paragraph. Finally, if $U/K = \RR \times \U(2) / \U(1)$, the metric is also a product, but the metric on $\U(2)/\U(1)$ is a Berger metric on $S^3$. Considering the restriction of $\theta$ to the ideal $\ug(2)$ of $\ug$, we see that it must act by skew-symmetric endomorphisms on $\ngo$. Indeed, we have that $\ug(2) = \zg(\ug(2)) \oplus \sug(2)$, and $\theta|_{\sug(2)} = 0$ since $\dim \ngo = 2$. But notice that we cannot have $\kg \subseteq \sug(2)$ because otherwise the homogeneous space would be $\RR^2 \times \SO(3)/\SO(2)$, so we may write a nonzero $Z\in \kg$ as $Z = Y_0 + Y_1$, with $Y_0\in\zg(\ug(2)), Y_1\in \sug(2)$ and $Y_0\neq 0$. In this way, we see that $\theta(Y_0)$ must be skew-symmetric, and this implies that $\theta(Y)$ is skew-symmetric for all $Y\in \ug(2)$. Using condition (ii) from Theorem \ref{solisemal} we arrive at a contradiction, because $C_\theta$ is $0$ on $\ug(2)$, and the Ricci operator of a Berger metric of $S^3$ cannot be negative definite.

\subsubsection{$\dim \ggo >6, \dim \ngo = 3$}$\,$

From \eqref{B} we have that $\dim \ggo \leq 9$ in this case, and $\dim \ggo \neq 8$ since a $3$-dimensional homogeneous space does not admit a $5$-dimensional transitive and effective group. As in the previous section, if $\dim \ggo = 9$, then $(U/K, g_{U/K})$ must be the $3$-dimensional hyperbolic space, which is diffeomorphic to $\RR^3$.

If $\dim \ggo = 7$, we look at the table in \cite[pp.~58]{BB4} to see the possibilities for $U/K$. There are two cases which are not diffeomorphic to $\RR^3$:
\begin{align*}
\left(\RR\times \SO(3)\right)&/ \SO(2) \simeq \RR \times S^2, \qquad \hbox{product of constant curvature metrics,}\\
\U(2)&/U(1) \simeq S^3, \qquad \hbox{Berger metric.}
\end{align*}
Since $\U(2)$ is compact, the second case is not possible because of Corollary \ref{Unocomp}. Assume now that $\ug = \RR \oplus \sog(3), \kg = \RR \subseteq \sog(3)$. By Theorem \ref{solisemal}, (ii) we must have that $\theta|_{\sog(3)}$ is faithful, otherwise $c$ would be positive. Since $\theta|_{\zg(\ug)}$ is also faithful, we conclude that $\theta: \ug \to \End(\RR^3)$ is faithful. This in turn implies that $\theta(Y_0) = e Id$, where $Y_0 \in \zg(\ug)$. Consider the restriction of $\theta$ to the ideal $\sog(3)$: it also satisfies the compatibility condition (iv) from Theorem \ref{solisemal}. By arguing as in the proof of Proposition \ref{cartan}, we can prove that $\theta(Y)^t = -\theta(Y)$ for all $Y\in \sog(3)$. This contradicts condition (ii) from Theorem \ref{solisemal}, since $c<0$ and the metric on $\SO(3)/\SO(2)$ has positive curvature.

We conclude that $U/K$ must be diffeomorphic to $\RR^3$.

\subsubsection{$\dim \ggo >6, \dim \ngo = 4$}$\,$

Using \eqref{B}, the only possibility in this case is that $\dim \ggo = 7$. But then $U/K$ is a $2$-dimensional space form, so it must be the $2$-dimensional hyperbolic space, and this is diffeomorphic to $\RR^2$.

\subsubsection{$\dim \ggo >6, \dim \ngo \geq 5$}$\,$

In this case $G$ is solvable and by Lemma \ref{lemadimn}, \eqref{lemadimn-u2} $(G/K,g)$ is isometric to a solvsoliton; in particular it is diffeomorphic to $\RR^6$.

\section{Appendix - by Jorge Lauret}\label{app}

%\begin{proposition}\cite{Jorge}\label{Unocomp}
%Let $(G/K,g)$ be an expanding algebraic soliton, consider the decomposition $\gkhn$ as in $\eqref{descdeg}$ and assume that $\hg\neq 0$. Then, $U$ cannot be compact.
%\end{proposition}

Let $(G/K,g)$ be an expanding algebraic soliton with metric reductive decomposition $\ggo=\kg\oplus\hg\oplus\ngo$ as in Theorem \ref{solisemal}. Let $U$ denote the connected Lie subgroup of $G$ with Lie algebra $\ug=\kg\oplus\hg$.  Recall that we
can assume that $K$ is a compact Lie subgroup of $U$, and thus $K$ is contained in
some maximal compact subgroup $K_{max}$ of $U$ which is connected (see \cite{Iws,Hch}) with Lie algebra $\kg_{max}$.

Let us assume that either $U$ is compact (i.e. $\kg_{max}=\ug$) or $\ug$ is semisimple and there is a direct sum decomposition
$$
\hg=\hg^{-}\oplus\hg^{+},
$$
such that $\kg_{max}=\kg\oplus\hg^{-}$ and $\ug=\kg_{max}\oplus\hg^+$ is a Cartan decomposition of $\ug$, that is,
$$
[\kg_{max},\hg^+]\subset\hg^+, \qquad [\hg^+,\hg^+]\subset\kg_{max},
$$
and the Killing form of $\ug$ is negative definite on $\kg_{max}$ and positive definite on $\hg^+$.  It is worth mentioning that $\hg^-$ and $\hg^+$ are not necessarily orthogonal with respect to $\ip$.

\begin{proposition}\label{cartan}
If $\la\hg^-,\hg^+\ra=0$, then
$$
\theta(Y)^t=-\theta(Y) \quad\forall Y\in\hg^-, \qquad\quad \theta(Y)^t=\theta(Y)
\quad\forall Y\in\hg^+.
$$
\end{proposition}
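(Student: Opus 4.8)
The plan is to recast the statement as the coincidence of two representations of $\ug$ on $\ngo$, and then to exploit the semisimplicity of $\ug$ together with conditions (ii) and (iv) of Theorem~\ref{solisemal}. Let $\Theta\colon\ug\to\ug$ be the Cartan involution determined by the given decomposition, i.e.\ $\Theta|_{\kg_{max}}=\mathrm{id}$ and $\Theta|_{\hg^+}=-\mathrm{id}$, and let $\tau$ denote the Cartan involution $\tau(B)=-B^t$ of $\glg(\ngo)$ attached to the fixed inner product $\la\cdot,\cdot\ra|_{\ngo}$, so that the $\tau$-fixed operators are exactly the skew-symmetric ones. Since $\Theta Y=Y$ for $Y\in\hg^-$ and $\Theta Y=-Y$ for $Y\in\hg^+$, the conclusion of the Proposition is \emph{equivalent} to the single identity $\theta(Y)^t=-\theta(\Theta Y)$ for all $Y\in\ug$, that is, to $\tau\circ\theta=\theta\circ\Theta$ as maps $\ug\to\glg(\ngo)$.

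First I would record two preliminary remarks. Because $\la\cdot,\cdot\ra$ is $\Ad(K)$-invariant, $\kg$ acts on $\ngo$ by skew-symmetric endomorphisms, so $\theta(Z)^t=-\theta(Z)=-\theta(\Theta Z)$ for $Z\in\kg$; thus the identity to be proved already holds on $\kg$. Secondly, Theorem~\ref{solisemal}(iv) gives $\theta(Y)^t\in\Der(\ngo)$ for every $Y\in\hg$, and from $[\theta(X),\theta(Y)]^t=[\theta(Y)^t,\theta(X)^t]$ one checks that the map $\rho\colon\ug\to\Der(\ngo)$ with $\rho|_\kg=\theta|_\kg$ and $\rho|_\hg=-(\theta|_\hg)^t$ is a Lie algebra \emph{representation}; note $\rho=\tau\circ\theta$, the equality on $\kg$ being exactly the first remark. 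On the other hand $\sigma:=\theta\circ\Theta$ is also a representation of $\ug$ on $\ngo$, since $\Theta$ is a Lie algebra automorphism. Hence it remains to prove that the two representations $\rho$ and $\sigma$, which coincide on $\kg$, are equal.

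To bring the soliton hypotheses into play I would fix an orthonormal basis $\{Y_i\}$ of $\hg$ adapted to the decomposition $\hg=\hg^-\oplus\hg^+$ --- possible precisely because the hypothesis $\la\hg^-,\hg^+\ra=0$ makes it orthogonal. Put $\psi:=\rho-\sigma$. For $Y=Y^-+Y^+\in\hg=\hg^-\oplus\hg^+$ one has $[\theta(Y),\theta(\Theta Y)]=2\,\theta([Y^+,Y^-])$, and summing this over the adapted basis the right-hand side vanishes, so condition (iv) of Theorem~\ref{solisemal} is equivalent to $\sum_i[\theta(Y_i),\psi(Y_i)]=0$. Moreover $\psi$ carries $\hg^+$ into skew-symmetric operators and $\hg^-$ into symmetric ones --- indeed $\psi(Y)=-2\,S(\theta(Y))$ for $Y\in\hg^-$ and $\psi(W)=2\,A(\theta(W))$ for $W\in\hg^+$, where $S(T)=\unm(T+T^t)$, $A(T)=\unm(T-T^t)$ --- and $\psi$ is $\kg$-equivariant. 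I would then finish by proving $\psi=0$, combining $\sum_i[\theta(Y_i),\psi(Y_i)]=0$ with the two sources of sign information on $\hg^\pm$: on the one hand the Killing form of $\ug$ is negative definite on $\kg_{max}\supseteq\hg^-$ and positive definite on $\hg^+$, so (via the proportionality of $(X,X')\mapsto\tr(\theta(X)\theta(X'))$ to the Killing form on each simple ideal, with nonnegative constant) $\tr\theta(Y)^2\le 0$ on $\hg^-$ and $\tr\theta(Y)^2\ge 0$ on $\hg^+$; on the other hand condition (ii) of Theorem~\ref{solisemal} forces $\la C_\theta Y,Y\ra=\|S(\theta(Y))\|^2\ge 0$, which together with the explicit form of $\ricci_\ug$ restricts the symmetric part of $\theta$ on $\hg^-$ and the skew part on $\hg^+$. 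The outcome should be $\sum_a\|S(\theta(e_a))\|^2=\sum_\mu\|A(\theta(f_\mu))\|^2=0$ for orthonormal bases $\{e_a\}$ of $\hg^-$ and $\{f_\mu\}$ of $\hg^+$, i.e.\ $\psi=0$, which is the claim. (The case $U$ compact, i.e.\ $\hg^+=0$, is the special case $\Theta=\mathrm{id}$ of the same argument.)

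The hard part is this final step. The difficulty is that ``$\theta(\hg^-)$ consists of skew-symmetric operators'' --- equivalently, ``$\theta(\ug)\subseteq\glg(\ngo)$ is $\tau$-stable'' --- is itself part of the conclusion, so one cannot simply invoke the existence of a $\tau$-compatible Cartan decomposition of $\theta(\ug)$, nor the general conjugacy of maximal compact subalgebras, to deduce $\rho=\sigma$ from the mere fact that $\rho$ and $\sigma$ agree on $\kg$. What must be used essentially are the soliton-specific relations (ii) and (iv) of Theorem~\ref{solisemal}, which are exactly the extra constraints unavailable for a general reductive homogeneous space; the decisive point is the trace/positivity estimate forcing the vanishing of the symmetric part of $\theta|_{\hg^-}$ and of the skew part of $\theta|_{\hg^+}$.
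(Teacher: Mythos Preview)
Your reformulation is clean and correct up to the point where you define $\psi=\rho-\sigma$: indeed $\tau(B)=-B^t$ is a Lie algebra automorphism of $\glg(\ngo)$, so $\rho=\tau\circ\theta$ is a representation, and the statement to be proved is exactly $\psi=0$. But the ``final step'' is not just hard, it is genuinely missing, and the tools you list do not close the gap. First, the identity $\sum_i[\theta(Y_i),\psi(Y_i)]=0$ that you derive from (iv) carries no information beyond (iv) itself: writing $S_i=S(\theta(Y_i))$, $A_i=A(\theta(Y_i))$, one has $[\theta(Y_i),\psi(Y_i)]=-2[A_i,S_i]$ for \emph{every} $Y_i$ (whether in $\hg^-$ or $\hg^+$), so your identity is literally $-\sum_i[\theta(Y_i),\theta(Y_i)^t]=0$ rewritten. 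Second, the Killing-form sign argument is correct but only yields $\|S(\theta(Y))\|^2\le\|A(\theta(Y))\|^2$ on $\hg^-$ and the reverse on $\hg^+$; these inequalities are far from forcing $S(\theta|_{\hg^-})=0$ or $A(\theta|_{\hg^+})=0$. Third, condition (ii) merely says $\ricci_\ug=cI+C_\theta$: without independent control of $\ricci_\ug$ this does not constrain $C_\theta$, so the ``trace/positivity estimate'' you appeal to has no input to work with. In short, you have correctly identified that the difficulty is precisely that $\theta(\ug)$ is not known to be $\tau$-stable, but you have not found a mechanism to prove it.

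The paper's proof takes an entirely different route and, notably, uses \emph{only} condition (iv), not (ii). One interprets (iv) as the vanishing of the moment map $m(\theta)=|\theta|^{-2}\sum_i[\theta(Y_i),\theta(Y_i)^t]$ for the $\Gl(\ngo)$-action $(h\cdot\theta)(Y)=h\theta(Y)h^{-1}$ on $W=\End(\hg,\glg(\ngo))$, so $\theta$ is a minimal vector. By Mostow's theorem there is some inner product on $\ngo$ --- equivalently some $h\in\Gl(\ngo)$ --- for which $h\cdot\theta$ sends $\kg_{max}$ to skew-symmetric and $\hg^+$ to symmetric operators; in particular $m(h\cdot\theta)=0$ as well. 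The decisive input is then the real Kempf--Ness type theorem: two minimal vectors in the same $\Gl(\ngo)$-orbit lie in the same $\Or(\ngo)$-orbit. Hence $h\cdot\theta=\varphi\cdot\theta$ for some $\varphi\in\Or(\ngo)$, and conjugating back gives $\theta(Y)^t=\mp\theta(Y)$ on $\hg^\mp$. This GIT step is exactly the ``extra mechanism'' your argument lacks; it is what converts the single scalar constraint (iv) into the full structural conclusion.
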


\begin{proof}
We need some results on geometric invariant theory concerning moment maps for real representations of real reductive Lie groups (see e.g. \cite[Appendix]{cruzchica} for more information).  Given two inner product vector spaces $(\hg,\ip)$ and $(\ngo,\ip)$ we
consider the linear action of $\Gl(\ngo)$ on the vector space $W:=\End(\hg,\glg(\ngo))$
given by
\begin{equation}\label{actionw}
(h\cdot\theta)(Y) = h\theta(Y)h^{-1}, \qquad \forall \; Y\in\hg,\; h\in\Gl(\ngo),\; \theta\in W.
\end{equation}
The corresponding $\glg(\ngo)$-representation is then given by
$$
\pi(A)(\theta)= [A,\theta(\cdot)], \qquad\forall \; A\in\glg(\ngo),\;
\theta\in W.
$$
If $\{ Y_i\}$ is an orthonormal basis of $\hg$, then a canonical
$\Or(\ngo)$-invariant inner product on $\End(\hg,\glg(\ngo))$ is defined by
$$
\la\theta,\theta'\ra= \sum\la\theta(Y_i),\theta'(Y_i)\ra =
\sum\tr{\theta(Y_i)\theta'(Y_i)^t}.
$$
As a Cartan decomposition we take $\glg(\ngo)=\sog(\ngo)\oplus\sym(\ngo)$, thus the
moment map $m:W\longrightarrow\sym(\ngo)$ for the action
(\ref{actionw}) is given by
\begin{equation}\label{mmw}
m(\theta)=\tfrac{1}{|\theta|^2}\sum[\theta(Y_i),\theta(Y_i)^t],
\end{equation}
where $\{ Y_i\}$ can be actually any orthonormal basis of $\hg$.  Indeed, for any $A\in\sym(\ngo)$, we have that
$$
\begin{array}{rcl}
\la m(\theta),A\ra &=& \tfrac{1}{|\theta|^2}\la\pi(A)\theta,\theta\ra = \tfrac{1}{|\theta|^2}\sum\la[A,\theta(Y_i)],\theta(Y_i)\ra \\ \\
&=& \tfrac{1}{|\theta|^2}\sum\la A,[\theta(Y_i),\theta(Y_i)^t]\ra.
\end{array}
$$

We now apply this to $\hg$ and $\ngo$ coming from the reductive decomposition $\ggo=\kg\oplus\hg\oplus\ngo$ as above.  It follows from Theorem \ref{solisemal}, (iv) that
$$
m(\theta)=\tfrac{1}{|\theta|^2}\sum [\theta(Y_i),(\theta(Y_i))^t]=0,
$$
thus $\theta$ is a minimal vector. On the other hand, since $\ug=\kg_{max}\oplus\hg^+$ is a Cartan decomposition, there exists an inner product $\ip_1$ on $\ngo$ such that $\theta(\kg_{max})$ and
$\theta(\hg^+)$ are respectively contained in the space of skew-symmetric and
symmetric maps with respect to $\ip_1$ (see e.g. \cite[Lemma 3]{Dtt} or \cite{Dnl}).  We note that if $U$ is compact (and possibly non-semisimple) then the above assertion also holds.  This is equivalent to the existence of an
element $h\in\Gl(\ngo)$ such that
$$
\begin{array}{l}
(h\theta(Y)h^{-1})^t=-h\theta(Y)h^{-1}, \qquad\forall Y\in\hg^-, \\ \\
(h\theta(Y)h^{-1})^t=h\theta(Y)h^{-1}, \qquad\forall Y\in\hg^+.
\end{array}
$$
We therefore obtain that also $m(h\cdot\theta)=0$ and so according to \cite[Theorem 11.1, (iii)]{cruzchica},
there exists a $\vp\in\Or(\ngo)$ such that $h\cdot\theta=\vp\cdot\theta$.  It follows that for any $Y\in\hg^{\mp}$,
$$
\begin{array}{rcl}
\theta(Y)^t &=& \vp^{-1}(\vp\theta(Y)\vp^{-1})^t\vp = \vp^{-1}(\vp\cdot\theta)(Y)^t\vp
= \vp^{-1}(h\cdot\theta)(Y)^t\vp \\ \\
&=& \mp\vp^{-1}(h\cdot\theta)(Y)\vp = \mp\vp^{-1}(\vp\cdot\theta)(Y)\vp =\mp\theta(Y),
\end{array}
$$
as was to be shown.
\end{proof}

\begin{corollary}\label{Unocomp}
The Lie group $U$ can not be compact, unless $\hg=0$.
\end{corollary}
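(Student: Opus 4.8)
The plan is to derive a contradiction from the assumption that $U$ is compact while $\hg\neq 0$. The first observation is that compactness of $U$ is precisely the first alternative appearing in the hypothesis of Proposition~\ref{cartan}: in this situation $\kg_{max}=\ug$, so necessarily $\hg^-=\hg$ and $\hg^+=0$, and the condition $\la\hg^-,\hg^+\ra=0$ holds vacuously. Since the proof of Proposition~\ref{cartan} explicitly covers the compact (possibly non-semisimple) case, it applies verbatim and yields $\theta(Y)^t=-\theta(Y)$ for every $Y\in\hg$; that is, $\ad Y|_\ngo$ is skew-symmetric with respect to $\ip|_\ngo$ for all $Y\in\hg$.

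Next I would feed this into Theorem~\ref{solisemal}, (ii). Skew-symmetry of each $\ad Y|_\ngo$ forces $S(\ad Y|_\ngo)=0$, hence the symmetric operator $C_\theta$ (characterized by $\la C_\theta Y,Y\ra=\tr(S(\ad Y|_\ngo)^2)$) vanishes identically. Therefore $\ricci_\ug=cI$ with $c<0$: in other words $(U/K,g_{U/K})$ is an Einstein homogeneous space of negative scalar curvature, with $\dim U/K=\dim\hg>0$.

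The last step rules this out using compactness: since $U$ and $K$ are compact, $U/K$ is a compact Riemannian manifold whose Ricci tensor $cI$ is negative definite, and by Bochner's vanishing theorem such a manifold has finite isometry group, which is incompatible with the transitive action of the positive-dimensional group $U$. This contradiction forces $\hg=0$. Since essentially all the work is carried out in Proposition~\ref{cartan}, I do not expect a genuine obstacle here; the only point worth double-checking is that Proposition~\ref{cartan} truly applies to the compact case with $\hg^+=0$, which is exactly the content of the parenthetical remark inside its proof.
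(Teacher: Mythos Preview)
Your argument is correct and follows essentially the same route as the paper's proof: apply Proposition~\ref{cartan} in the compact case (where $\hg=\hg^-$, $\hg^+=0$) to obtain $C_\theta=0$, deduce from Theorem~\ref{solisemal}(ii) that $(U/K,g_{U/K})$ is Einstein with negative scalar curvature, and then invoke Bochner's theorem to reach a contradiction with compactness of $U/K$. Your version spells out slightly more detail (why $C_\theta=0$ follows from skew-symmetry, and the finite-isometry-group formulation of Bochner), but the strategy is identical.
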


\begin{proof}
If $U$ is compact then $\hg=\hg^-$, or equivalently $\hg^+=0$, and thus by Proposition \ref{cartan} we obtain that $C_\theta=0$.  This implies that
$\Ricci_{U/K}=cI$, which is a contradiction, since the associated compact
homogeneous Riemannian manifold $U/K$ can never be Einstein of negative scalar
curvature by Bochner's Theorem (see \cite[Theorem 1.84]{Bss}).  It follows that $\hg=0$, concluding the proof.
\end{proof}

\begin{corollary}\label{unosl2}
If $\kg=0$ then the Lie algebra $\ug$ can not be isomorphic to $\slg_2(\RR)$.
\end{corollary}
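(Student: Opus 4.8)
The plan is to argue by contradiction, combining Milnor's classification of left--invariant metrics on three--dimensional unimodular Lie groups with Proposition \ref{cartan}. So suppose $\kg=0$ and $\ug\simeq\slg_2(\RR)$. Since $\kg=0$, the metric reductive decomposition of Theorem \ref{solisemal} is $(\ug=0\oplus\ug,\ip|_\ug)$, so $\ricci_\ug$ is simply the Ricci operator of the left--invariant metric $\ip|_\ug$ on $\ug\simeq\slg_2(\RR)$, and condition (ii) of that theorem reads $\ricci_\ug=cI+C_\theta$ with $c<0$ and $C_\theta\geq 0$.

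First I would invoke \cite{Mln}: there is a $\ip$--orthonormal basis $\{e_1,e_2,e_3\}$ of $\ug$ with
\[
[e_1,e_2]=\lambda_3 e_3,\qquad [e_2,e_3]=\lambda_1 e_1,\qquad [e_3,e_1]=\lambda_2 e_2,
\]
and, since $\ug\simeq\slg_2(\RR)$, after a cyclic reordering and an overall sign change we may assume $\lambda_1,\lambda_2>0>\lambda_3$. In this basis $\ricci_\ug e_i=2\mu_j\mu_k e_i$, where $\mu_i:=\tfrac12(\lambda_j+\lambda_k-\lambda_i)$ and $\{i,j,k\}=\{1,2,3\}$; from the sign constraints, $\mu_3>0$, $\mu_1+\mu_2=\lambda_3<0$, $\mu_1+\mu_3=\lambda_2>0$ and $\mu_2+\mu_3=\lambda_1>0$. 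The key observation is that this basis is also orthogonal for the Killing form $B$ of $\slg_2(\RR)$: a direct computation gives $B(e_i,e_j)=0$ for $i\neq j$, $B(e_1,e_1)=-2\lambda_2\lambda_3>0$, $B(e_2,e_2)=-2\lambda_1\lambda_3>0$ and $B(e_3,e_3)=-2\lambda_1\lambda_2<0$. Hence $\RR e_3$ is a maximal compact subalgebra, and the bracket relations show that $\ug=\RR e_3\oplus\langle e_1,e_2\rangle$ is a Cartan decomposition, with $[\langle e_1,e_2\rangle,\langle e_1,e_2\rangle]\subseteq\RR e_3$. Since $\{e_1,e_2,e_3\}$ is $\ip$--orthonormal we have $\langle\RR e_3,\langle e_1,e_2\rangle\rangle=0$, so Proposition \ref{cartan} applies with $\hg^-=\RR e_3$ and $\hg^+=\langle e_1,e_2\rangle$, yielding that $\theta(e_3)$ is skew--symmetric. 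Consequently $S(\theta(e_3))=0$, and since $\la C_\theta Y,Y'\ra=\tr(S(\theta(Y))S(\theta(Y')))$ we get $C_\theta e_3=0$.

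It then remains to derive a contradiction from $\ricci_\ug=cI+C_\theta$. Evaluating at $e_3$ gives $2\mu_1\mu_2=c<0$, so $\mu_1$ and $\mu_2$ have opposite signs; by the symmetry $e_1\leftrightarrow e_2$ we may assume $\mu_1>0>\mu_2$. As $C_\theta\geq 0$, the number $c$ is the smallest eigenvalue of $\ricci_\ug$, so $2\mu_1\mu_2\leq 2\mu_2\mu_3$; dividing by the negative number $2\mu_2$ yields $\mu_1\geq\mu_3$. On the other hand $\mu_1+\mu_2<0$ gives $\mu_1<-\mu_2$ and $\mu_2+\mu_3>0$ gives $-\mu_2<\mu_3$, hence $\mu_1<\mu_3$, a contradiction. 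The only step that is not a routine computation is the middle one: noticing that Milnor's metric--adapted basis simultaneously $B$--orthogonalizes $\slg_2(\RR)$, which is exactly what makes the Cartan decomposition $\ip$--orthogonal and lets Proposition \ref{cartan} force $C_\theta$ to vanish on the compact direction $e_3$; with that in hand, the contradiction is pure sign bookkeeping on Milnor's Ricci formula.
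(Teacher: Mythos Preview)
Your proof is correct and shares the paper's overall strategy: pick a Milnor orthonormal basis for the left-invariant metric on $\slg_2(\RR)$, observe that this basis is also Killing-orthogonal so that $\hg^-=\RR e_3$, $\hg^+=\langle e_1,e_2\rangle$ is an $\ip$-orthogonal Cartan decomposition, and apply Proposition~\ref{cartan} to get $C_\theta e_3=0$. The only difference is in the endgame. The paper also uses the second half of Proposition~\ref{cartan} (that $\theta|_{\hg^+}$ consists of symmetric maps, making $C_\theta|_{\hg^+}$ a positive multiple of the Killing form), writes the three scalar equations coming from $\ricci_\ug=cI+C_\theta$, subtracts two of them to force $a=b$, and then reads off $c=\tfrac12 d^2>0$. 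You instead use only the weaker fact $C_\theta\geq 0$, so that $c=2\mu_1\mu_2$ is the minimal Ricci eigenvalue, and extract a contradiction purely from sign constraints on Milnor's $\mu_i$. Your route is a bit more economical in its use of Proposition~\ref{cartan}; the paper's yields the slightly sharper byproduct $a=b$ along the way.
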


\begin{proof}
Assume that $\ug\simeq\slg_2(\RR)$.  Since $\kg=0$ we have that $\ug=\hg$ and then there is always a Cartan decomposition $\ug=\hg^-\oplus\hg^+$ which is orthogonal.  Indeed, one can always find an orthonormal basis such that $[e_2, e_3] = ae_1$, $[e_3, e_1] = be_2$, $[e_1, e_2] = de_3$ with $a,b>0$, $d<0$ (see e.g. \cite{Mln}) and so $\hg^-=\RR e_3$, $\hg^+=\RR e_1+\RR e_2$ define a Cartan decomposition.  It follows from Proposition \ref{cartan} that $C_\theta e_3=0$ and $C_\theta|_{\hg^+}$ is a positive multiple of the Killing form.

We now use the formulas given in \cite[Example 2.7]{homRF} to obtain from $\Ricci_{\ug}=cI+C_\theta$ that
\[
 \unm (a^2-(b-d)^2) = c - 2\lambda bd, \qquad \unm (b^2-(a-d)^2) = c - 2\lambda ad, \qquad c=\unm(d^2-(a-b)^2),
\]
for some $\lambda>0$. By substracting the first two equations we obtain
\[
(a-b)(a+b-(2\lambda+1)d) = 0.
\]
Since $a+b-(2\lambda+1)d>0$, this implies that $a=b$, and putting this into the third equation yields $c = \unm d^2 > 0$, which is a contradiction.
%We now use the formulas given in \cite[Example 2.7]{homRF} to obtain from $\Ricci_{\ug}=cI+C_\theta$ that
%$$
%k=\unm(c^2-(a-b)^2), \qquad c^2-a^2=\lambda bc, \qquad c^2-b^2=\lambda ac,
%$$
%for some $\lambda>0$.  This implies that $c^2=a^2+ab+b^2$ and thus $k=\tfrac{3}{2}ab>0$, which is a contradiction.
\end{proof}

\bibliography{semialglow}
\bibliographystyle{amsalpha}

\end{document}